\newcommand{\BEAS}{\begin{eqnarray*}}
\newcommand{\EEAS}{\end{eqnarray*}}
\newcommand{\BEA}{\begin{eqnarray}}
\newcommand{\EEA}{\end{eqnarray}}
\newcommand{\BEQ}{\begin{equation}}
\newcommand{\EEQ}{\end{equation}}
\newcommand{\BIT}{\begin{itemize}}
\newcommand{\EIT}{\end{itemize}}
\newcommand{\BNUM}{\begin{enumerate}}
\newcommand{\ENUM}{\end{enumerate}}
\newcommand{\BA}{\begin{array}}
\newcommand{\EA}{\end{array}}
\newcommand{\BC}{\begin{center}}
\newcommand{\EC}{\end{center}}
\newcommand{\reals}{{\mbox{\bf R}}}
\newcommand{\symm}{{\mbox{\bf S}}}  
\newcommand{\sgn}{\mbox{sgn}}
\newcommand{\QED}{~~\rule[-1pt]{6pt}{6pt}}
\newcommand{\argmin}{\mathop{\rm argmin}}
\newcommand{\argmax}{\mathop{\rm argmax}}
\newtheorem{theorem}{Theorem}
\newtheorem{example}[theorem]{Example}
\newtheorem{proposition}[theorem]{Proposition}
\newtheorem{remark}[theorem]{Remark}
\newtheorem{lemma}[theorem]{Lemma}
\newtheorem{definition}[theorem]{Definition}
\newcounter{exno}
\newenvironment{proof}{\textbf{Proof.}}{\QED\bigskip}
\long\def\@makecaption#1#2{
   \vskip 9pt
   \begin{small}
   \setbox\@tempboxa\hbox{{\bf #1:} #2}
   \ifdim \wd\@tempboxa > 5.5in
        \begin{center}
        \begin{minipage}[t]{5.5in}
        \addtolength{\baselineskip}{-0.95pt}
        {\bf #1:} #2 \par
        \addtolength{\baselineskip}{0.95pt}
        \end{minipage}
        \end{center}
   \else
    \hbox to\hsize{\hfil\box\@tempboxa\hfil}
   \fi
   \end{small}\par
}
\newcounter{oursection}
\newcounter{lecture}
\title{Conditional Gradient Algorithms for Rank-One Matrix Approximations with a Sparsity Constraint}
\author{Ronny Luss\thanks{School of Mathematical Sciences, Tel Aviv University, Ramat Aviv 69978, Israel ({\tt ronnyluss@gmail.com}).}
\and Marc Teboulle\thanks{School of Mathematical Sciences, Tel Aviv University, Ramat Aviv 69978, Israel ({\tt teboulle@post.tau.ac.il}).}}
\begin{document}

 \begin{center}

 {\Large\bf Conditional Gradient Algorithms for Rank-One Matrix Approximations with a Sparsity Constraint}

\bigskip

  Ronny Luss\footnote{Corresponding author} and Marc Teboulle\medskip

School of Mathematical Sciences\\
Tel-Aviv University, Ramat-Aviv 69978, Israel\\ email: {\texttt ronnyluss@gmail.com, teboulle@post.tau.ac.il}\\
\medskip

\today

\end{center}
\medskip

\begin{abstract}
The sparsity constrained rank-one matrix approximation problem is a difficult mathematical optimization problem which arises in a wide array of useful applications in engineering, machine learning and statistics, and the design of algorithms for this problem has attracted
intensive research activities. We introduce an algorithmic framework, called ConGradU, that
unifies a variety of seemingly different algorithms that have been derived from disparate
approaches, and allows for deriving new schemes. Building on the old and well-known
conditional gradient algorithm, ConGradU is a simplified version with unit step size and
yields a generic algorithm which either is given by an analytic formula or requires a very low
computational complexity. Mathematical properties are systematically developed and numerical
experiments are given.
\end{abstract}

\begin{keywords}
Sparse Principal Component Analysis, PCA, Conditional Gradient Algorithms, Sparse Eigenvalue Problems,
Matrix Approximations
\end{keywords}

\section{Introduction}
The problem of interest here is the sparsity constrained rank-one matrix approximation given
by
\begin{equation} \label{eq:sparse_pca_intro}
\max \{ x^TAx : \|x\|_2=1, \|x\|_0\leq k , x\in \reals^n \},
\end{equation}
where $A\in\symm^n$ is a given real symmetric matrix, and $1 <k \leq n$ is a parameter controlling the sparsity of $x$ which is
defined by counting the number of nonzero entries of $x$ and denoted using the $l_0$ notation: $\|x\|_0=|\{i: x_i \neq  0\}|$. This problem is also commonly known as the sparse Principal Component Analysis (PCA) problem, or as we refer to it, $l_0$-constrained PCA. Without the $l_0$ constraint,
the problem reduces to finding the first principal eigenvector and the corresponding maximal eigenvalue of
the matrix $A$, i.e., solves
$$
\max \{x^TAx : \|x\|_2=1, \; x\in\reals^n \},
$$
which is the PCA problem.

Suppose $A=B^TB$ where $B$ is an $m\times n$ mean-centered data matrix with $m$ samples and $n$ variables, and denote by $v$ be the principal eigenvector of $A$, i.e., $v$ solves the above PCA problem.  Then $Bv$ projects the data $B$ to one dimension that maximizes the variance of the projected data.  In general, PCA can be used to reduce $B$ to $l<n$ dimensions via the projection $B(v_1,\ldots,v_l)$ with $v_i$ as the $i^{th}$ eigenvector of $A$.  In addition to dimensionality reduction, PCA can be used for visualization, clustering, and other tasks for data analysis.  Such tasks occur in various fields, e.g., genetics \cite{Alt2000,Pri2006}, face recognition \cite{Han96,Zha97}, and signal processing \cite{Lak2004,Har2009}.

In PCA, the eigenvector is typically dense, i.e., each component of the eigenvector is nonzero, and hence the projected variables are linear functions of all original $n$ variables.  In sparse PCA, we restrict the number of variables used in this linear projection, thereby making it easier to interpret the projections.  The additional $l_0$ constraint however makes problem (\ref{eq:sparse_pca_intro}) a difficult  and mathematically interesting problem which arises in many  scientific and engineering applications where very large-scale data sets must be analyzed and interpreted.

Not surprisingly, the search and development of adequate algorithms for solving problem (\ref{eq:sparse_pca_intro}) have
thus received much attention in the past decade, and this will be discussed below. But first, we want to make clear the main  purpose of this paper. We have three main goals:
\begin{itemize}
\item To develop a novel and very simple approach to the $l_0$-constrained PCA problem (\ref{eq:sparse_pca_intro}) as formulated and without any modifications, i.e., no relaxations or penalization, which is
amenable to dimensions in the hundreds of thousands or even millions.
\item To present a ``Father Algorithm'', which we call ConGradU, based on the well-known first-order conditional
gradient scheme, which is very simple, allows for a rigorous convergence analysis, and provides a family of cheap algorithms well-suited to solving various formulations of sparse PCA.
\item To provide a closure and unification to many seemingly disparate approaches recently proposed, and which will be shown to be particular realizations of ConGradU.
\end{itemize}

Most current approaches to sparse PCA can be categorized as solving one of several modified
optimization problems based on penalization, relaxations, or both, and include:
\begin{enumerate}[label=(\alph*)]
\item $l_1$-constrained PCA: $\max{\{x^TAx : \|x\|_2 \leq 1,\|x\|_1\le \sqrt{k}, x\in\reals^n\}}$,
\item $l_0$-penalized PCA: $\max{\{x^TAx - s\|x\|_0 : \|x\|_2\leq 1, x\in\reals^n\}},$
\item $l_1$-penalized PCA: $\max{\{x^TAx -s\|x\|_1: \|x\|_2 \leq 1, x\in\reals^n\}}$,
\item Approximate $l_0$-penalized PCA: $\max\{x^TAx -s g_p(x): \|x\|_2 \leq 1, x\in\reals^n\}$,\\
where $g_p(x) \simeq \|x\|_0$ and $p$ controls the approximation,
\item Convex relaxations.
\end{enumerate}
These models will be presented in detail in the forthcoming section, except for approach (e) which is not thoroughly discussed in this paper (but see \S \ref{ss:literature} below).

In a nutshell, the original $l_0$-constrained PCA problem (\ref{eq:sparse_pca_intro}) and the corresponding modified problems (a)-(d) above can be written or transformed in such a way that they reduce to maximizing a convex function over some compact set $C\subseteq\reals^n$:
$$
(P)\qquad \max\{ F(x):\; x\in C \}.
$$

When the problem of maximizing a {\em linear} function over the compact set $C$ can be
efficiently computed, or even better, can be obtained {\em analytically}, a very simple and
natural iterative scheme to consider for solving (P) is the so-called conditional gradient
algorithm \cite{Levi1966,dunn79}\footnote{The conditional gradient scheme is also known as the Frank-Wolfe
algorithm \cite{Fran1956}. The latter was devised to minimize quadratic convex functions over a
bounded polyhedron, while the former was extended mainly to solve convex minimization problems,
see Section \ref{s:cond_grad_method} for more precise details and relevant references.}.

To achieve the goals alluded to above, in this paper, all developed algorithms for tackling
problem (P) will be based on the conditional gradient scheme with a unit step size called
ConGradU. At this juncture, it is important to notice that Mangasarian [24] seems to have been
the first work suggesting and analyzing the conditional gradient algorithm with a unit step
size for maximizing a convex function over a polyhedron, in the context of machine
learning problems.

A common and interesting appeal of the resulting algorithms is that they take the shape of a
closed-form iterative scheme, i.e., they can be written as
\[
x^{j+1}=\frac{\mathcal{S}(Ax^j)}{\|\mathcal{S}(Ax^j)\|_2}, \; j=0,1,\ldots
\]
where $\mathcal{S}:\reals^n\rightarrow\reals^n$ is a simple operator that can be either
written in explicit form or efficiently computed\footnote{In fact, when
$\mathcal{S}$ is the identity operator, the scheme is nothing else but the well known power method to compute the first principal eigenvector of the matrix $A$, see, e.g., \cite{Str2005}.}.

All problems addressed here are difficult nonconvex
optimization problems and we make no claims with respect to global
optimality;  after all, these are difficult problems so obtaining cheap solutions must have some cost which here is a certificate of global optimality. Moreover, an important driving point is that there is no reason to discount stationary solutions of nonconvex problems versus globally optimal solutions to convex relaxations. For neither solution do we have a measure of the gap to the optimal solution of problem (\ref{eq:sparse_pca_intro}).  We can empirically demonstrate similar solution quality, while the nonconvex methods are orders of magnitudes cheaper to compute and can be applied to data sets much larger than can be done with any known convex relaxation.  This is in contrast to the well-known sparse recovery problem, for which there is an equivalence between the difficult combinatorial problem and the linear program relaxation when the data matrix satisfies certain conditions \cite{Cand05}. To put in perspective the development and results of this paper, we first discuss some of the relevant literature that has motivated this work.

\subsection{Literature}\label{ss:literature}
The literature on sparse PCA can be divided according to the different modifications discussed
above. Here, we briefly survey these approaches and detail them further in Section \ref{s:sparse_pca}. With respect to the $l_0$-constrained PCA problem (\ref{eq:sparse_pca_intro}), thresholding
\cite{Cadi95} is perhaps the simplest approach, however is known to produce poor results.  Sparse low-rank approximations (SLRA) of \cite{Zha2002} looks for a more general ($uv^T$ rather than $xx^T$) sparse approximation by taking an approximation error level as input and determining the sparsity level $k$ that is required to satisfy the desired error.  Greedy methods \cite{Mogh06b} are also computationally expensive due to a maximum eigenvalue computation at each iteration, however an approximate greedy approach \cite{dasp2008} offers a much cheaper way to derive an entire path of solutions (for each $k=0\ldots n$) which
often suffice. These approaches are heuristics. The only globally optimal approach to this formulation
is an exact search method \cite{Mogh06b} that is applicable only to extremely small problems.

The $l_1$-constrained PCA problem is a relaxation, or more precisely an upper bound,
to problem (\ref{eq:sparse_pca_intro}). This problem was first considered in \cite{Joll03}
and called SCoTLASS (simplified component technique least absolute shrinkage and selection).
It was motivated by the LASSO (least absolute selection and shrinkage operator) approach used
in statistics \cite{Tibs96} for inducing sparsity in regression.
In \cite{Tren06}, a true penalty function is used to handle the $l_1$ constraint
and the resulting problem is solved as a system of differential equations
(which requires a smooth approximation for the $l_1$ constraint).
While this approach was tested solely on a small 13-dimensional data set,
we mention it as the only true penalty function approach in the sparse PCA literature.
More recently, a computationally cheap approach to l1-constrained PCA that can solve large-scale problems
was given by \cite{Witt2009}. We will show that this scheme is an application of the conditional gradient algorithm.

As already explained in the introduction, we are not interested in
formulations that require expensive computations.
Convex relaxations are either semidefinite-based or
optimize over symmetric $n\times n$ matrices, and are,
indeed, much more computationally expensive than what we would like to consider here.
 Nevertheless, it is important to briefly recall some of these works.
 In \cite{dasp07}, d'Aspremont et al. introduced a convex relaxation to $l_1$-constrained PCA
 using semidefinite programming, however this formulation can only be solved on very small dimensions ($<100$).
 This was the first approach with convex optimization to any sparse PCA modification and motivated a
 convex relaxation for $l_1$-penalized PCA for which better algorithms were given.
 Another approach for $l_1$-constrained PCA in \cite{Luss2011} solves a different convex
 relaxation over $\symm^n$ based on a variational represention of the $l_1$ ball (
 this relaxation turned out to be the dual of the semidefinite relaxation in \cite{dasp07}).
 This was the first convex relaxation for $l_1$-constrained PCA amenable to a medium number of dimensions (1000-2000).

We next turn to penalized sparse PCA where the sparsity-inducing term appears in the objective
function. Several approaches are known for $l_0$-penalized PCA.  The heuristic given in
\cite{Zha2002} is extended in \cite{Zha2004} to this penalized version of PCA.  In
\cite{dasp2008}, the problem is reformulated as an equivalent convex maximization problem to
which a semidefinite relaxation is applied, however, as mentioned above, solving such problems
is too computationally expensive.  More recently, \cite{Jour2010} derived the same convex
maximization problem (in a different manner) and proposed a first-order gradient-based
algorithm which is identical to ConGradU. Another convex maximization representation was very
recently presented in \cite{Srip2010},
whereby a specific parameterized concave approximation
is used to replace the $l_0$ term, and the resulting problem was solved by an iterative scheme
called the minorization-maximization technique, which is, in fact, a specific instance of
ConGradU.

The $l_1$-penalized PCA problem can be solved by a convex relaxation as
shown in \cite{dasp07}, but is amenable to only a medium number of dimensions.
In \cite{Zou06}, Zhou et al. considered a reformulation of PCA as a two variable
regression problem to which an $l_1$ penalty is added for one of the variables.
While their approach is amenable to larger problems, it is not exactly $l_1$-penalized PCA
and is still more computationally expensive (cf. Section \ref{ss:l1penalized_pca})
than other approaches we will discuss.  Recently, \cite{Jour2010} reformulated
$l_1$-penalized PCA as an equivalent convex maximization problem (as with their $l_0$-penalized approach)
that is also solved by the conditional gradient algorithm.

As discussed above, the conditional gradient algorithm has previously
been applied to sparse PCA in various forms, so we now put the contributions
of the above works into perspective.  The works of \cite{Witt2009,Shen2008}
detail their iterative schemes (without any general algorithm) specifically for sparse PCA.
 \cite{Srip2010} details a general algorithm, similar to ConGradU, but meant for maximizing the
 difference of convex functions over a convex set.
 While maximizing a convex function is a subset of this algorithm,
 ConGradU, as detailed in Section \ref{s:cond_grad_method}, allows for nonconvex sets.
 \cite{Jour2010} is the only previously known work that
 details a general algorithm for maximizing a convex function over a compact (possibly nonconvex) set.
 Indeed, the first-order algorithm proposed in \cite{Jour2010} (labeled Algorithm 1 therein)
 is identical to what we term the ConGradU algorithm,  however it is
 not recognized as the conditional gradient algorithm.
 As noticed in \cite{Jour2010}, both the $l_0$ and $l_1$-penalized
 PCA algorithms they have proposed were earlier stated in \cite{Shen2008},
 subject to slight modifications, who look for a general rank-one approximation
 (i.e., $uv^T$ rather than $xx^T$); however, no convergence results were stated in \cite{Shen2008}.

\subsection{Outline}
We provide computationally simple approaches to both constrained and penalized
versions of sparse PCA. It is important to recognize that all algorithms here are schemes for nonconvex problems; we pay the
price of no global optimality criterion and gain in amenability to problem sizes that convex
relaxations cannot handle.

In Section \ref{s:formulations},  we define the problems of interest and some of their
properties. Section \ref{s:cond_grad_method} recalls some basic optimality results for
maximizing a convex function over a compact set. We then detail ConGradU, a specific
conditional gradient scheme with unit step size, and establish its convergence properties.
Section \ref{s:props} provides a mathematical toolbox proving a series of propositions that
are used to develop the known cheap algorithms mentioned above, as well as for deriving new
schemes.  These propositions are simple and easy to prove so we believe it benefits the reader
to go through these tools first.

Section \ref{s:sparse_pca} then details the
algorithms for all versions of sparse PCA. We start with a simple algorithm for the true
$l_0$-constrained PCA problem (\ref{eq:sparse_pca_intro}). To the author's knowledge, this is the first
available scheme that directly approaches this problem, is amenable to large-scale problems
and proven to converge to a stationary point of problem (\ref{eq:sparse_pca_intro}).  While the $l_0$ constraint is a difficult nonsmooth and nonconvex constraint, we need not look
for ways around this constraint, and rather we approach the given problem as is.  The basis
for our approach is the simple, yet surprising, result (cf. Section \ref{s:props}) that while
maximizing a quadratic function over the $l_2$ unit ball with an $l_0$ constraint is a
difficult problem, maximizing a linear function over the same nonconvex set is simple and can
be solved in $O(n)$ time. An important aspect of the new $l_0$-constrained PCA algorithm is
that no parameters need be tuned in order to obtain a stationary point that has the exact
desired sparsity\footnote{All other algorithm based on modifications can be used to obtain
a desired sparsity as well, however parameters must be tuned accordingly.}. The second main
part of Section \ref{s:sparse_pca} focuses on all aforementioned iterative schemes which have
been proposed in the literature. Building on the results of Section \ref{s:props}, we show that all these schemes can directly be obtained as a particular realization of ConGradU, or of some variant of it, thus providing a unifying framework to various seemingly different algorithmic approaches.

Section \ref{s:experiments} provides experimental results and demonstrates the efficiency of many of the methods we have reviewed on large-scale problems. We show that they all give comparable solutions, i.e., very similar $k$-sparse solutions, with the advantage of $l_0$-constrained PCA being that the $k$-sparse solution is directly obtained at a lower computational cost. Section \ref{s:extensions} ends with concluding remarks and briefly shows how to use the same tools to develop simple algorithms for related sparsity constrained problems.

\subsection{Notation} We write $\symm^n$ ($\symm^n_+,\symm^n_{++}$) to denote the set of symmetric
(positive-semidefinite, positive-definite) matrices of size $n$ and $\reals^n$
($\reals^n_+,\reals^n_{++}$) to denote the set of (nonnegative, strictly positive) real
vectors of size $n$. The vector $e$ is the $n$-vector of ones.  Given a vector $x\in\reals^n$, $\|x\|_2=(\sum_i{x_i^2})^{\frac{1}{2}}$ defines the $l_2$ norm, $\|x\|_0$ defines the cardinality of $x$, i.e., the number of nonzero entries of $x$ and usually called here the $l_0$ norm\footnote{We note an abuse of terminology because $\|x\|_0$ is not a true norm since it is not positively homogenous.}, and $\|x\|_\infty=\max{(|x_1|,\ldots,|x_n|)}$.  Given a matrix $X\in\symm^n$, $\|X\|_\infty=\max_{i,j}{X_{i,j}}$ and $\|X\|_F=(\sum_{i,j}{X_{i,j}^2})^{\frac{1}{2}}$. For a vector $x\in\reals^n$, $|x|$ denotes the
vector with $i^{th}$ entry $|x_i|$, $\mbox{sgn}(x)$ denotes the vector with $i^{th}$ entry
-1,0,1  if $x_i<0,x_i=0,x_i>0$, and $x_+$ denotes the vector with $i^{th}$ entry
$\max{(x_i,0)}$. For a vector $x\in\reals^n$, $\mbox{diag}(x)$ denotes the diagonal matrix
with $x$ on its diagonal. For any nonzero integer $n$, denote the set $\{1, \ldots ,n\}$ as $[n]$.  Let $I_n$ denote
the identity matrix in dimension $n$. Let $\mathcal{C}^1$ denote the space of once
continuously differentiable functions on $\reals^n$. Given an optimization problem (P), we use
$\argmax (P)$ to denote its optimal solutions set.

\section{Problem Formulations}\label{s:formulations}
This section describes the relationship between $l_0$-constrained PCA and the various modified sparse PCA problems that are discussed throughout the paper, as well as certain properties.
\subsection{The Original Optimization Model}
We start with some useful and elementary properties of the  $l_0$-constrained PCA problem.\\

\noindent {\bf The $l_0$-constrained PCA Problem}\\
Given a symmetric matrix $A\in S^n$ and sparsity level $k \in [1,n]$, the main
problem of interest is to solve the $l_0$-constrained PCA problem (i.e., the sparse eigenvalue problem):
\begin{equation} \label{eq:sparse_pca}
(E)\qquad\qquad \max\{x^TAx : \|x\|_2=1, \|x\|_0\leq k,x\in\reals^n\}.
\end{equation}

In most applications, $A$ is the covariance matrix of some data matrix $B\in\reals^{m\times
n}$ such that $A=B^TB$ and hence is positive semidefinite. The latter fact will be exploited
in reformulations of the problem described below. In fact, as is very well-known, since problem
(E) is constrained by the unit sphere, without loss of generality (see e.g., \cite{dasp2008,
dasp07,Jour2010}) we can always assume that $A$ is positive semidefinite since we clearly have
$$\max_{x\in\reals^n}\{x^TA_\sigma x: \|x\|_2=1, \|x\|_0\leq k\} =
\max_{x\in\reals^n}\{x^TAx: \|x\|_2=1, \|x\|_0\leq k\} +\sigma,$$ with $\sigma
>0$ such that $A_\sigma:=A +\sigma I_n \in S^n_{++},$ i.e., with respect
to optimal objective values we are solving the same problem.

The next
result furthermore states that we can relax the sphere constraint to its convex counterpart,
the unit ball, namely we consider the problem
$$ (E_\sigma) \qquad\qquad \max\{x^TA_\sigma x:  \|x\|_2\leq 1, \|x\|_0\leq k, x\in\reals^n\} $$
and also show that problems $(E)$ and $(E_\sigma)$ are equivalent and admit the same set of
optimal solutions. The simple proof is omitted.

\begin{lemma}\label{equiv-prob} Fix any $\sigma >0$ such that $A_\sigma \in S^n_{++}$. Then,\\
(a) $\max \{x^TA_\sigma x: \|x\|_2 \leq 1, \|x\|_0\leq k\}= \max\{x^TAx : \|x\|_2=1, \|x\|_0\leq k\}
+ \sigma.$\\
(b) $\argmax (E)= \argmax (E_\sigma)$.
\end{lemma}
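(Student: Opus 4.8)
The plan is to exploit the positive definiteness of $A_\sigma$ to show that the ball‑constrained maximum in $(E_\sigma)$ is always attained on the unit sphere, after which (a) follows from the identity $x^TA_\sigma x=x^TAx+\sigma$ valid whenever $\|x\|_2=1$, and (b) follows because on the sphere the objectives of $(E)$ and $(E_\sigma)$ differ only by the additive constant $\sigma$.

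First I would record the scaling identity: for any $x\neq 0$, setting $\hat x=x/\|x\|_2$ (so that $\|\hat x\|_2=1$ and $\|\hat x\|_0=\|x\|_0$), one has $x^TA_\sigma x=\|x\|_2^2\,\hat x^TA_\sigma\hat x$. Since $A_\sigma\in S^n_{++}$, the scalar $\hat x^TA_\sigma\hat x$ is strictly positive, hence for every $x$ with $0<\|x\|_2\le 1$ we have $x^TA_\sigma x\le \hat x^TA_\sigma\hat x$, with strict inequality when $\|x\|_2<1$; moreover $x=0$ gives objective value $0$, which is strictly below $\hat x^TA_\sigma\hat x$ for any unit vector $\hat x$ with $\|\hat x\|_0\le k$ (such vectors exist since $k\ge 1$). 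Consequently, passing from any ball‑feasible point of $(E_\sigma)$ to its radial projection onto the unit sphere preserves feasibility (the number of nonzero entries is unchanged) and never decreases the objective, strictly increasing it unless the point already lies on the sphere.

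Part (a) is then immediate: the feasible set of $(E_\sigma)$ is compact (a finite union of unit balls of $k$‑dimensional coordinate subspaces), so the maximum is attained, and by the previous paragraph it equals $\max\{x^TA_\sigma x:\|x\|_2=1,\|x\|_0\le k\}=\max\{x^TAx:\|x\|_2=1,\|x\|_0\le k\}+\sigma$. For part (b), the same observation shows that every maximizer of $(E_\sigma)$ must satisfy $\|x\|_2=1$, since any ball‑feasible point with $\|x\|_2<1$ — including $x=0$ — is strictly suboptimal; and restricted to unit‑norm vectors the objectives of $(E)$ and $(E_\sigma)$ differ only by the constant $\sigma$, so the two problems share exactly the same set of maximizers, i.e.\ $\argmax(E)=\argmax(E_\sigma)$.

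The argument is routine; the only point deserving attention is that it genuinely requires $A_\sigma\succ 0$ rather than merely $A_\sigma\succeq 0$: if $\hat x^TA_\sigma\hat x$ could vanish on the feasible set, radial scaling would no longer be strictly improving and maximizers strictly inside the ball could occur, so the hypothesis $A_\sigma\in S^n_{++}$ (equivalently, $\sigma$ chosen large enough) is used in an essential way.
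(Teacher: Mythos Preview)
Your proof is correct and complete. The paper itself omits the proof of this lemma, calling it simple, and your argument---using positive definiteness of $A_\sigma$ to force every maximizer of $(E_\sigma)$ onto the unit sphere via radial scaling, then invoking the additive shift $x^TA_\sigma x=x^TAx+\sigma$ valid on unit vectors---is precisely the standard route one would expect and the one implicitly intended by the authors.
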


Needless to say  that both problems $(E)$ and $(E_\sigma)$ remain  hard nonconvex problems as they consist of maximizing a convex  (strongly convex) function over a compact set, and
clearly the possibility of using the  convex relaxation $\{x: \|x\|_2\leq 1\}$ instead of the
nonconvex unit sphere constraint does not change the situation. However, it is useful to know
that either one of these constraints can be used when tackling the problem, and this will
be done throughout the rest of the paper without any further mentioning.

Throughout the paper, $(E)$ will be referred to as the $l_0$-constrained PCA problem, and $A$ always denotes a symmetric matrix while $A_\sigma$ will denote a symmetric positive definite matrix.  We now consider the other formulations that will be analyzed.

\subsection{Modified Optimization Models}
As mentioned in the introduction, most approaches to sparse PCA solve one of the following
modified problems.  The first variation is a relaxation based on the relation
\BEQ \label{eq:card_ineq}
\|x\|_1\le\sqrt{\|x\|_0}\|x\|_2 \qquad \forall
x\in\reals^n
\EEQ
which follows from the Cauchy-Schwarz inequality.  The hard $l_0$ constraint in problem (\ref{eq:sparse_pca}) is replaced by an $l_1$ constraint, resulting in\\

\noindent{\bf The $l_1$-constrained PCA Problem}\\
\begin{equation}
\label{eq:sparse_pca_l1constrained}
 \max{\{x^TAx : \|x\|_2 \leq 1,\|x\|_1\le \sqrt{k}, x\in\reals^n\}},
\end{equation}
which thanks to inequality (\ref{eq:card_ineq}) is an upper bound to the original $l_0$-constrained PCA problem.  Two other variations are based on penalizations of the $l_0$ and $l_1$ constraints of the above formulations.  Note that the penalized terminology is different from the usual one used in optimization, and here is used to mention that it rather optimizes a tradeoff between how good and how sparse the approximation is.  We first penalize the $l_0$ constraint in problem (\ref{eq:sparse_pca}), resulting in\\

\noindent{\bf The $l_0$-penalized PCA Problem}\\
\begin{equation} \label{eq:sparse_pca_l0penalized}
\max{\{x^TAx - s\|x\|_0 : \|x\|_2\leq 1, x\in\reals^n\}},
\end{equation}
where $s>0$ is a parameter that must be tuned to achieve the truly desired sparsity level at which $\|x\|_0=k$.  However, to avoid the trivial optimal solution
$x^*(s)\equiv 0$, the parameter $s$ must be restricted. First recall the well-known norm relations
\begin{equation}\label{sec2:nrelat}
\|x\|_\infty \leq \|x\|_2 \leq \|x\|_1, \; \forall x \in \reals^n.
\end{equation}
Using the H\"{o}lder inequality\footnote{For any $u,v \in \reals^n$, the H\"{o}lder inequality states
that $|\langle u,v\rangle| \leq \|u\|_p \|v\|_q$, where $p+q=pq, p\geq 1.$}, combined with inequalities (\ref{eq:card_ineq}) and (\ref{sec2:nrelat}), it follows that
$$
x^TAx - s\|x\|_0 \leq \|A\|_\infty \|x\|_1^2-s \|x\|_0 \leq (\|A\|_\infty -s)\|x^*\|_0,
$$
for all $x$ feasible for problem (\ref{eq:sparse_pca_l0penalized}). Thus, to avoid the trivial solution, it is assumed that $s\in (0,\|A\|_\infty)$.  Note that while $s\ge\|A\|_\infty$ necessarily implies the trivial solution, taking $s<\|A\|_\infty$ does not guarantee we avoid it, but only gives a particular bound.

Likewise, a penalized version of the $l_1$-constrained PCA problem
(\ref{eq:sparse_pca_l1constrained}) yields\\

\noindent{\bf The $l_1$-penalized PCA Problem}\\
\begin{equation}
\label{eq:sparse_pca_l1penalized}
\max{\{x^TAx -s\|x\|_1: \|x\|_2 \leq 1, x\in\reals^n\}}.
\end{equation}
Note that, as in problem (\ref{eq:sparse_pca_l0penalized}), we
need to restrict the value of the parameter $s$ in order to avoid the trivial solution.
Again, using the H\"{o}lder inequality and (\ref{sec2:nrelat}), it is easy to see that
$$x^TAx -s\|x\|_1 \leq (\|A\|_F -s)\|x^*\|_1,$$ for all $x$ feasible for problem (\ref{eq:sparse_pca_l1penalized}), and hence it is assumed that $s\in (0, \|A\|_F)$.

Again, note that the formulated penalized/relaxed problems remain hard nonconvex maximization problems despite the convexity of  their constraints.  In fact, $l_0$-penalized PCA and the $l_1$-penalized version share an additional difficulty in that their objectives are neither concave nor convex. In Sections \ref{ss:l0penalized_pca} and \ref{ss:l1penalized_pca}, we show how this difficulty is overcome.\\

\noindent{\bf The Approximate $l_0$-penalized PCA Problem}\\
The last approach for solving problem (\ref{eq:sparse_pca}) involves approximating the $l_0$ norm in the objective of  $l_0$-penalized PCA. The idea of
approximating the $l_0$ norm by some nicer continuous functions naturally emerged from very
well-known mathematical approximations of the step and sign functions (see, e.g., \cite{Brac2000}).  Indeed, it is easy to see that for any $x\in \reals^n$, one can write
$$
\|x\|_0 =\sum_{i=1}^{n} \sgn(|x_i|).
$$
Thus, formally, we want to replace the problematic expression $\sgn(|t|)$ by some nicer function
and consider an approximation of the form
\[
\|x\|_0=\lim_{p\rightarrow0}{\displaystyle\sum_{i=1}^n{\varphi_p(|x_i|)}},
\]
where $\varphi_p:\reals_+\rightarrow\reals_+$ is an appropriately chosen smooth function.  Here, we consider the class of smooth concave functions which are monotone increasing and normalized such that $\varphi_p(0)=0, \varphi_p'(0)>0$.

This suggests to approximate problem (\ref{eq:sparse_pca_l0penalized}) by considering
for $p,s>0$ the following approximate maximization problem:
\begin{equation} \label{eq:approx}
\max\{x^TAx - s \sum_{i=1}^n \varphi_p(|x_i|) : \|x\|_2\leq1, x\in\reals^n\}.
\end{equation}

Approximations of the $l_0$ norm have been considered in various applied contexts, for instance in the machine learning literature, see, e.g., \cite{Mang1996,West2003}.

There exist several possible choices for the function $\varphi_p(\cdot)$ that approximate the
step function, and for details the reader is referred  to the classic book by Bracewell \cite[Chapter
4]{Brac2000}.  For illustration, here we give the following examples as well as their graphical representation in Figure \ref{fig:l0_concave_approximations} (left).

\begin{example}\label{exphi}Concave functions $\varphi_p(\cdot),p>0$
\begin{enumerate}[label=(\alph*)]
\item $\varphi_p(t)=(2/\pi)\tan^{-1}(t/p)$,
\item $\varphi_p(t)= \log(1+t/p)/\log(1+1/p)$,
\item $\varphi_p(t)= (1 + p/t)^{-1}$,
\item $\varphi_p(t)= 1-e^{-t/p}.$
\end{enumerate}
\end{example}

\begin{figure}[h!] \begin{center}
  \begin{tabular} {cc}
     \psfrag{title}[t][t]{\small{Approximations $\varphi_p(t)$ with $p=.05$}}
     \psfrag{y}[b]{\small{$\varphi_p(t)$}}
     \psfrag{x}[t]{\small{t}}     \includegraphics[width=0.49\textwidth]{./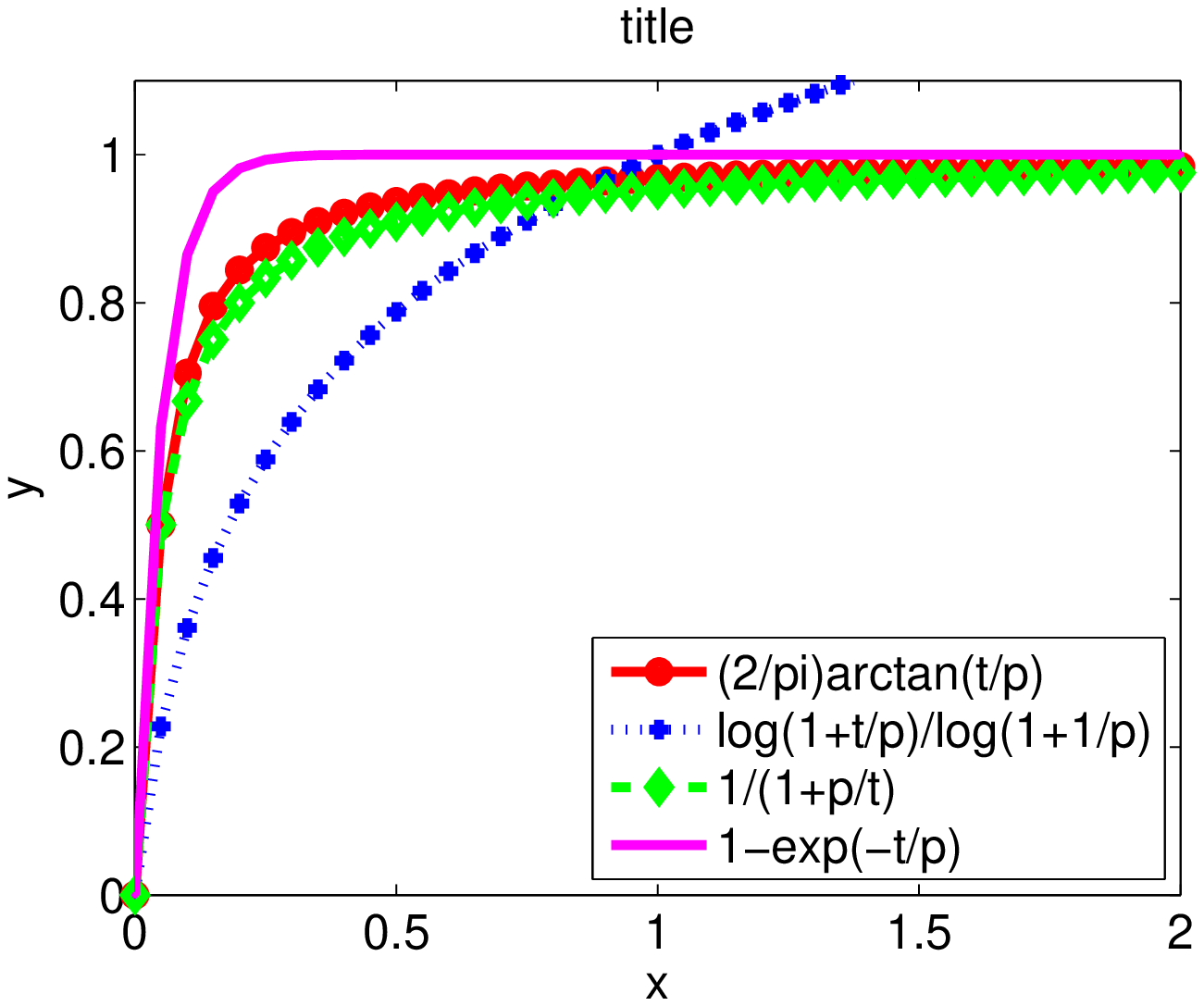}&
     \psfrag{title}[t][t]{\small{Approximation as $p\rightarrow0$}}
     \psfrag{y}[b]{\small{$1-e^{-t/p}$}}
     \psfrag{x}[t]{\small{t}}
\includegraphics[width=0.49\textwidth]{./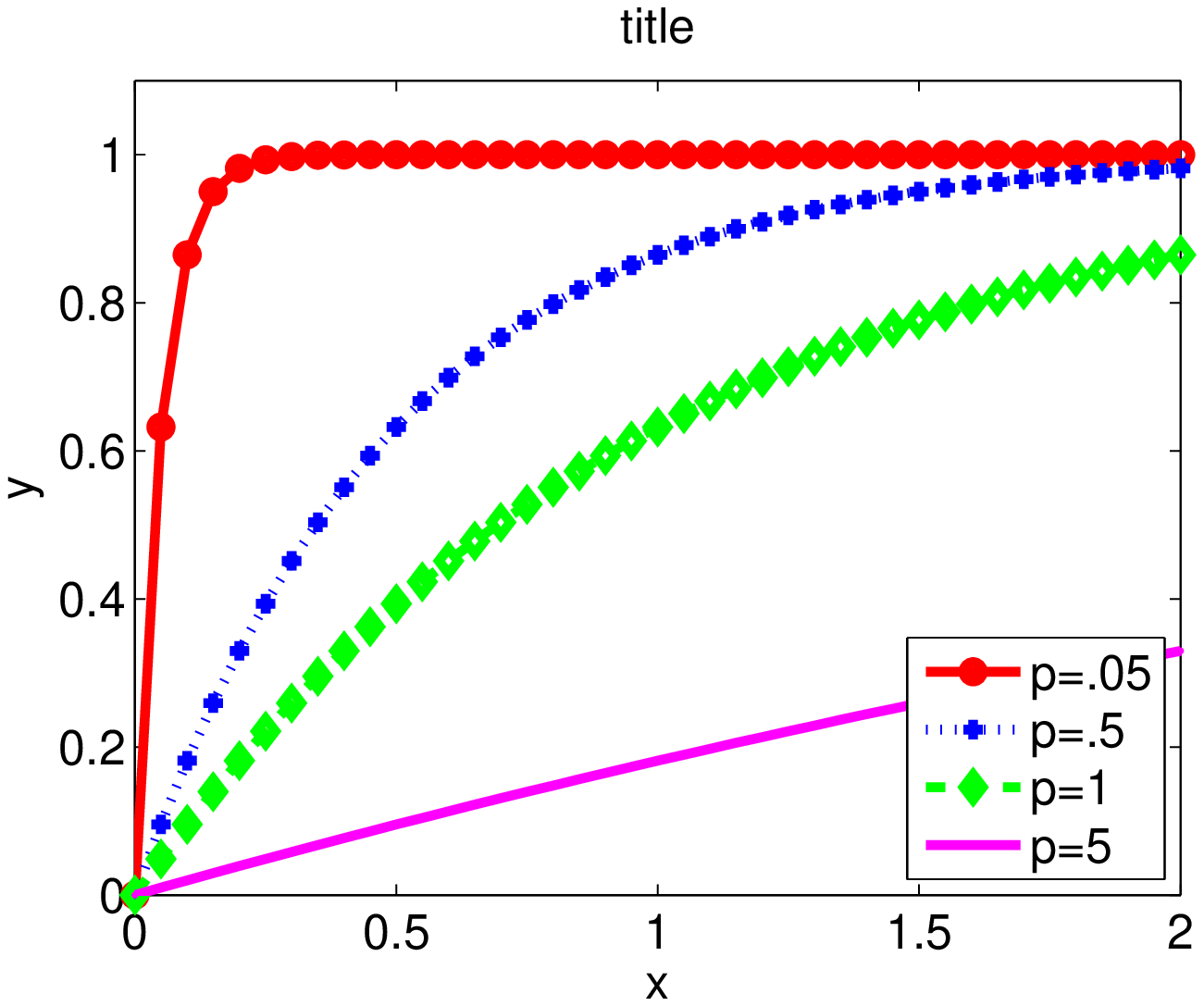}
  \end{tabular}
\caption{The left plot shows four concave functions $\varphi_p(t)$ that can be used to approximate $\|x\|_0\approx\sum_{i=1}^n{\varphi_p(|x_i|)}$ for fixed $p=.05$.  The right plot shows how the concave approximation $1-e^{-t/p}$ converges to the indicator function as $p\rightarrow0$.}
\label{fig:l0_concave_approximations}\end{center}\end{figure}

The last example (d) was successfully used in the context of machine learning by Mangasarian \cite{Mang1996}, and gives
\begin{equation}
\varphi_p(|t|)=1-e^{-p|t|}=\begin{cases}
0 & \text{if $t=0$}\\
>0 & \text{if $t\ne0$.}
\end{cases}
\end{equation}
A nice feature of this example is that it also lower bounds the $l_0$ norm, namely, we have
$$ \sum_{i=1}^n{\varphi_p(|x_i|)}\leq\|x\|_0, \quad \forall x\in\reals^n.$$
See Figure \ref{fig:l0_concave_approximations} (right) for its behavior for various values of $p$.

All problems listed in this section, as well as several other equivalent reformulations that will be derived in Section \ref{s:sparse_pca}, will be solved by ConGradU, a conditional gradient algorithm with unit step size for maximizing a convex function over a compact set which is described next.

\section{The Conditional Gradient Algorithm} \label{s:cond_grad_method}

\subsection{Background} The conditional gradient algorithm, is a well-known and simple gradient algorithm,
see, e.g., \cite{Levi1966,dunn79} and the book \cite{Bert1999} for a general overview of this method as well as more references. Note that this algorithm dates back to
1956, and is also known as the Frank-Wolfe algorithm \cite{Fran1956} that was originally proposed for solving linearly constrained quadratic programs.

The conditional gradient algorithm is presented here for maximization problems because of our interest in the sparse PCA problem.  We first recall the conditional gradient algorithm for maximizing a continuously differentiable function $F:\reals^n\to \reals$ over a nonempty compact convex set $C \subset
\reals^n$:
\begin{equation} \label{eq:fw}
\max{\{F(x) : x\in C\}}.
\end{equation}
The conditional gradient algorithm generates a sequence $\{x^j\}$ via the iteration:
$$
x^0 \in C, \; x^{j+1}=x^j+\alpha^j (p^j -x^j), \; j=0,1,\ldots $$ where
\begin{equation}\label{cgp}
p^j=\argmax{\{\langle x-x^j,\nabla F(x^j)\rangle : x\in C\}},
\end{equation}
and where $\alpha^j \in (0,1]$ is a stepsize that can be determined by the Armijo or limited
maximization rule \cite{Bert1999}. It can be shown that every limit point of the sequence
$\{x^j\}$ generated by the conditional gradient algorithm is a stationary point. Furthermore,  under various additional assumptions on the function $F$ and/or the set $C$ (e.g., strong convexity of the function $F$ and/or of the set $C$), rate of convergence results can be established, see in particular the work of Dunn \cite{dunn80} and references therein.

Clearly, the conditional gradient algorithm becomes attractive and simple when its main computational step (\ref{cgp}) can be performed efficiently, e.g., when $C$ is a bounded polyhedron it reduces to solving a linear program, or even better when it can be solved {\em analytically}.

\subsection{Maximizing a Convex Function via ConGradU} \label{ss:cond_grad_method_convergence}
As we shall see below, all potential reformulations of the sparse PCA problem will lead to maximizing a {\em convex} (possibly nonsmooth) function over a compact (possibly nonconvex) set $C\subset \reals^n$.  It will be shown that, for such a class of problems, the conditional gradient algorithm will reduce to a very simple iterative scheme.  First we recall some basic definitions and properties relevant to maximizing convex functions.

Let $F:\reals^n\to \reals$ be convex, and $C\subset \reals^n$ be a nonempty compact set.
Throughout, we assume that $F$ is not constant on $C$. We denote by $F'(x)$ any subgradient of the convex function $F$ at $x$ which
satisfies
\begin{equation}\label{subgrad}
F(v)-F(x) \geq \langle v-x, F'(x) \rangle, \; \forall v\in \reals^n.
\end{equation}
The set of all subgradients of $F$ at $x$ is the subdifferential of the function $F$ at $x$,
denoted by $\partial F(x)$, i.e.,
$$
\partial F(x)=\{g:\; F(v) \geq F(x) + \langle v-x, g \rangle, \; \forall v\in \reals^n\},
$$
which is a closed convex set. When $F\in\mathcal{C}^1$, the subdifferential reduces to a singleton which is the gradient of $F$, that is $\partial F(x)=\{\nabla F(x)\}$, and
(\ref{subgrad}) is the usual gradient inequality for the convex function $F$. In the following, we use the notation $F'(\cdot)$ to refer to either a gradient or subgradient of $F$; the context will be clear in the relevant situation.

The  first result recalls two useful properties when maximizing a convex function (see \cite[Section 32]{Rock1970}).
\begin{proposition}\label{max-basic} Let $F:\reals^n\to \reals$ be convex,   $S\subset \reals^n$ be
an arbitrary set and let $\mbox{conv}(S)$ denote its convex hull. Then,\\
(a) $\sup \{F(x): x\in \mbox{conv}(S)\}=\sup \{F(x): x\in S \}$, where the first supremum
is attained only if the second is attained.\\
(b) If $S \subset \reals^n$ is closed with nonempty boundary $\mbox{bd}(S)$, and $F$ is bounded
above on $S$, then $\sup \{F(x): x\in S\}=\sup \{F(x): x\in \mbox{bd}(S)\}.$
\end{proposition}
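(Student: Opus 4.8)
The plan is to prove each part directly from the definition of convex hull and basic properties of convex functions. For part (a), the inclusion $S \subseteq \mbox{conv}(S)$ immediately gives $\sup\{F(x): x\in \mbox{conv}(S)\} \geq \sup\{F(x): x\in S\}$, so the real content is the reverse inequality. First I would take any $x \in \mbox{conv}(S)$ and write it, via Carathéodory's theorem, as a finite convex combination $x = \sum_{i=1}^{m} \lambda_i s_i$ with $s_i \in S$, $\lambda_i \geq 0$, $\sum_i \lambda_i = 1$. Applying convexity of $F$ (Jensen's inequality, which follows by induction from the two-point definition) yields $F(x) \leq \sum_i \lambda_i F(s_i) \leq \max_i F(s_i) \leq \sup\{F(s): s\in S\}$. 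Taking the supremum over $x \in \mbox{conv}(S)$ gives equality of the two suprema. For the attainment claim: if the first supremum is attained at some $x^* \in \mbox{conv}(S)$, write $x^* = \sum_i \lambda_i s_i$ as above with all $\lambda_i > 0$; then $\sup\{F: \mbox{conv}(S)\} = F(x^*) \leq \sum_i \lambda_i F(s_i)$ forces $F(s_i) = F(x^*)$ for every $i$ (since each $F(s_i) \leq \sup\{F: S\} = \sup\{F: \mbox{conv}(S)\} = F(x^*)$ and a convex combination equal to its maximum requires all terms to equal that maximum), so the second supremum is attained at any $s_i$.

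For part (b), again one inclusion $\mbox{bd}(S) \subseteq S$ gives $\sup\{F: \mbox{bd}(S)\} \leq \sup\{F: S\}$ trivially, so I would establish the reverse. The idea is that a convex function cannot have a ``peak'' in the relative interior of $S$ without also achieving at least as large a value on the boundary. Concretely, fix any $x \in S$; if $x \in \mbox{bd}(S)$ there is nothing to do, so assume $x$ is an interior point (in the topological sense used, since $\mbox{bd}(S) = S \setminus \mbox{int}(S)$ when $S$ is closed). Pick any direction $d \neq 0$ and consider the ray $x + t d$ for $t \geq 0$. Since $S$ is closed and $F$ is bounded above on $S$, I would argue the ray must exit $S$: if $x + td \in S$ for all $t \geq 0$, then by convexity $t \mapsto F(x+td)$ is a convex function of $t$ that is bounded above on $[0,\infty)$, hence nonincreasing, so in particular $F(x+td) \geq \lim_{t\to\infty} \dots$ — this degenerate case I would handle by noting $F(x) \le F(x+td)$ fails to help, so instead I use the other direction. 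The cleaner argument: since $S$ is compact-like in the relevant coordinates or at least closed and $F$ bounded, let $\bar t = \sup\{t \geq 0 : x + td \in S\}$. If $\bar t < \infty$ then $x + \bar t d \in \mbox{bd}(S)$ by closedness, and convexity of $t \mapsto F(x+td)$ together with the fact that it cannot increase without bound (as $F$ is bounded above on $S$) is not quite enough — but monotonicity of convex functions of one variable gives that $F(x+td)$ is eventually monotone; choosing the direction $d$ so that $F$ is nondecreasing along it (at least one of $\pm d$ works, by convexity, unless $F$ is constant in that direction in which case both endpoints give the same value) we conclude $F(x) \leq F(x + \bar t d)$ with $x + \bar t d \in \mbox{bd}(S)$. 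Taking suprema over $x \in S$ then yields $\sup\{F: S\} \leq \sup\{F: \mbox{bd}(S)\}$.

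The main obstacle I anticipate is the one-dimensional monotonicity argument in part (b): I need to be careful about the case where the ray $x + td$ never leaves $S$ (so $\bar t = \infty$). In that situation $g(t) := F(x+td)$ is convex on $[0,\infty)$ and bounded above, which forces $g$ to be nonincreasing, giving $F(x) = g(0) \geq g(t)$ for all $t$ — the wrong direction. The fix is to use the opposite ray: consider $-d$ instead. Along $-d$, convexity of $g$ on all of $\reals$ (restricted to where the line meets $S$) together with boundedness means $g$ is nondecreasing as we go in the $-d$ direction from a point where the $+d$ ray stays in $S$; if also the $-d$ ray stays in $S$ forever then $g$ is simultaneously nonincreasing and nondecreasing, hence constant on the whole line, and then every point including boundary points (which exist since $\mbox{bd}(S) \neq \emptyset$ — here I would invoke that $F$ constant on an affine set means the value there equals the boundary value; more carefully, since $\mbox{bd}(S)$ is nonempty and $S$ is connected along this constancy set only if it reaches the boundary, I may instead simply note that not every line through an interior point can stay in $S$ in both directions unless $S$ contains a full line, and then a separate elementary argument handles that degenerate unbounded case, or one restricts attention to $S$ bounded which is the setting of interest). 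I expect the cleanest writeup restricts to the case relevant for the paper where $S$ is compact, in which case $\bar t < \infty$ always and the degeneracy disappears entirely; I will flag this. Since the paper says ``the simple proof is omitted'' for the related Lemma but states this as a recalled result from Rockafellar, I would in fact just cite \cite[Section 32]{Rock1970} and sketch the convex-combination argument for (a) and the one-dimensional monotonicity argument for (b) at the level above, without belaboring the unbounded edge case.
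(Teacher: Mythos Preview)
The paper does not prove this proposition at all: it is stated as a recalled result with a bare citation to \cite[Section 32]{Rock1970}. So there is no ``paper's own proof'' to compare against, and your final instinct --- to cite Rockafellar and give only a sketch --- is exactly what the authors do.

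On the substance of your sketch: part (a) is correct and is the standard argument. For part (b), your one-dimensional ray idea is the right one and can be made rigorous, but the write-up as given has a real gap you should be aware of. The proposition does \emph{not} assume $S$ is convex, so the interaction between ``choose $d$ so that $F$ is nondecreasing along $d$'' and ``the ray in direction $d$ exits $S$'' is more delicate than you indicate: the direction in which $F$ increases and the direction in which the line exits $S$ need not coincide a priori. The clean fix is to argue that if the ray in the nondecreasing direction stays in $S$ forever, then boundedness of $F$ on $S$ forces $F$ to be \emph{constant} along that ray (a convex nondecreasing function bounded above on $[0,\infty)$ is constant), and then convexity transfers the inequality to the opposite half-line, where the exit point on $\mbox{bd}(S)$ does the job; the only remaining degenerate case is when every line through $x$ lies entirely in $S$, which forces $S=\reals^n$ and hence $\mbox{bd}(S)=\emptyset$, contradicting the hypothesis. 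You gestured at this but did not close it. For the compact case (which is all the paper ever uses), your argument is already complete.
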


The next result gives a first-order optimality criterion  for maximizing a convex function $F$ over a compact set $C\in\reals^n$. It uses property (a) of Proposition \ref{max-basic} and
follows from \cite[Corollary 32.4.1]{Rock1970}.
\begin{proposition}\label{max-opt}
Let $F:\reals^n \to \reals$ be convex. If $x$ is a local maximum of $F$ over the nonempty
compact set $C$, then
\begin{equation}\label{opt}
(FOC)\qquad \langle v-x, F'(x) \rangle \leq 0, \quad \forall v \in C.
\end{equation}
\end{proposition}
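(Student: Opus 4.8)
\textbf{Proof proposal for Proposition \ref{max-opt}.}

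The plan is to invoke Proposition \ref{max-basic}(a) to pass from the compact set $C$ to its convex hull, and then to apply the standard first-order necessary condition for maximizing a convex function over a convex set, i.e. Rockafellar's Corollary 32.4.1. First I would observe that the convex hull $\conv(C)$ is compact (being the convex hull of a compact subset of $\reals^n$), and that by Proposition \ref{max-basic}(a) we have $\sup\{F(v):v\in C\}=\sup\{F(v):v\in\conv(C)\}$; moreover, since $F$ is continuous (a finite convex function on $\reals^n$) and both sets are compact, both suprema are attained. Hence $x$, being a local maximizer of $F$ over $C$, is in particular a point at which $F(x)=\max\{F(v):v\in C\}=\max\{F(v):v\in\conv(C)\}$, so $x$ is a global — and a fortiori local — maximizer of $F$ over the convex compact set $\conv(C)$.

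Next I would apply the first-order necessary optimality condition for maximizing a convex function over a convex set. For a convex $F$, a local maximum over a convex set is automatically a global maximum (this is the content of \cite[Section 32]{Rock1970} and underlies Corollary 32.4.1), and the necessary condition reads: there exists a subgradient $F'(x)\in\partial F(x)$ such that $\langle v-x,F'(x)\rangle\le 0$ for all $v\in\conv(C)$. This is immediate from the subgradient inequality (\ref{subgrad}): if $F(v)\le F(x)$ for all $v\in\conv(C)$ while simultaneously $F(v)-F(x)\ge\langle v-x,F'(x)\rangle$, one uses that the directional derivative $F'(x;d)=\max_{g\in\partial F(x)}\langle d,g\rangle$ is nonpositive along every feasible direction $d=v-x$ (since $x$ is a maximizer, $F$ cannot increase along any such direction), and then picks $F'(x)$ in the compact convex set $\partial F(x)$ attaining the relevant maximum; the separation/compactness argument here is exactly \cite[Corollary 32.4.1]{Rock1970}. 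Finally, since $C\subset\conv(C)$, the inequality $\langle v-x,F'(x)\rangle\le 0$ in particular holds for all $v\in C$, which is precisely (FOC).

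The only mildly delicate point — the main obstacle, such as it is — is the quantifier on the subgradient: for a nonsmooth convex $F$, the condition holds for \emph{some} $F'(x)\in\partial F(x)$ rather than for every subgradient, and the statement of Proposition \ref{max-opt} should be read with that understanding (consistent with the convention on $F'(\cdot)$ set up just before the proposition). In the smooth case $F\in\mathcal C^1$ this is a non-issue since $\partial F(x)=\{\nabla F(x)\}$. Everything else is a direct citation: compactness of $\conv(C)$, Proposition \ref{max-basic}(a) to transfer the maximization, and \cite[Corollary 32.4.1]{Rock1970} for the first-order condition over the convex set.
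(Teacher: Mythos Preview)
Your overall route---pass to $\mbox{conv}(C)$ via Proposition~\ref{max-basic}(a) and then invoke \cite[Corollary 32.4.1]{Rock1970}---is exactly what the paper indicates (it gives no detailed argument, only those two citations). However, your execution has a genuine gap. You assert that ``$x$, being a local maximizer of $F$ over $C$, is in particular a point at which $F(x)=\max\{F(v):v\in C\}$.'' This step is unjustified and false: a local maximum of a convex function over a \emph{nonconvex} compact set need not be global. Take $F(t)=t^{2}$ and $C=\{1,2\}\subset\reals$; both points are isolated, hence local maxima, yet $x=1$ is not global and (FOC) itself fails there since $\langle 2-1,F'(1)\rangle=2>0$. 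Your auxiliary claim that ``for a convex $F$, a local maximum over a convex set is automatically a global maximum'' is also false (the familiar fact is for \emph{minima} of convex functions, or equivalently maxima of concave ones): $F(t)=t^{2}$ on $[-1,2]$ has $x=-1$ as a local but not global maximizer.

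What does go through cleanly is the \emph{global} version. If $x$ is a global maximizer of $F$ over $C$, then Proposition~\ref{max-basic}(a) makes $x$ a global (hence local) maximizer over the convex compact set $\mbox{conv}(C)$; for any $v\in\mbox{conv}(C)$ one has $x+t(v-x)\in\mbox{conv}(C)$ for $t\in(0,1]$, so the directional derivative satisfies $F'(x;v-x)\le 0$, and since $F'(x;d)=\sup_{g\in\partial F(x)}\langle d,g\rangle$ this yields $\langle v-x,g\rangle\le 0$ for every subgradient $g$ and every $v\in C\subset\mbox{conv}(C)$. The obstruction in your argument is precisely the passage from local to global over the original nonconvex $C$; the discrete counterexample shows this cannot be repaired in general. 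The proposition should be read with ``global'' in place of ``local'' (consistent with the paper's own appeal to Proposition~\ref{max-basic}(a), a statement about suprema), and that reading is all that is used downstream.
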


When $F\in\mathcal{C}^1$, a point $x \in C$ satisfying the first-order optimality criteria (FOC)
(\ref{opt}) will be referred as a stationary point, otherwise, when $F$ is convex nonsmooth,
we will say that the point $x\in C$ satisfies (FOC).

We are now ready to state the algorithm. It turns out that when the function $F$ is convex and quadratic, we can also eliminate the need for finding a step size and consider the conditional gradient algorithm with a fixed unit step size and still preserve its convergence properties. This simplified version of the conditional gradient algorithm will be used throughout the paper and referred to as ConGradU.

\begin{algorithm}
\caption{{\bf ConGradU -- Conditional Gradient Algorithm with Unit Step Size}}
\begin{algorithmic} [1]
\REQUIRE {$x^0\in C$} \\
\medskip
\STATE $j \leftarrow 0, $ \WHILE {stopping criteria} \STATE $x^{j+1}\in \argmax\{ \langle
x-x^j, F'(x^j)\rangle : x\in C\}$ \ENDWHILE \RETURN $x^j$
 \end{algorithmic}
 \label{alg:frank_wolfe}
\end{algorithm}

To analyze ConGradU, in what follows, it will be convenient to introduce the quantity
\begin{equation}\label{gap}
\gamma(x) := \max \{\langle v-x, F'(x) \rangle:\; v\in C \}
\end{equation}
for any $x\in \reals^n$. Since $C$ is compact, this quantity is well-defined and admits a global maximizer $$u(x)\in \argmax{\{\langle v-x, F'(x) \rangle:\; v\in C\}},$$
and  thus, we have $\gamma (x) = \langle u(x)-x, F'(x) \rangle$.

In terms of the above defined quantities, we thus have that $x^*$ satisfies (FOC) is
equivalent to saying that $x^*$ is a global maximizer of $\psi(v)=\langle v-x^*, F'(x^*)\rangle$, i.e.,
$$
x^*\in\argmax \{ \langle v-x^*, F'(x^*)\rangle:\; v \in C\}=u(x^*).
$$
Thus the ConGradU algorithm is nothing else but a fixed point scheme for the map $u(\cdot)$, and simply reads as
$$x^0\in C, \; x^{j+1}=u(x^j),\quad j=0,1, \ldots .$$

\begin{lemma}\label{basic} Let $F:\reals^n\to \reals$  be convex, $C\subset \reals^n$ be nonempty and compact,
 and let $\gamma (x)$ be given by (\ref{gap}). Then,\\
(i) $\;\gamma (x) \geq 0\;$ for all $x\in C$.\\
(ii) For any $v \in C$ and any $x\in \reals^n$,
$$
F(u(x))-F(x) \geq \gamma (x) \geq \langle v-x, F'(x) \rangle.
$$
\end{lemma}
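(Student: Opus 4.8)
The plan is to read off both claims directly from the definition \eqref{gap} of $\gamma$ and from the subgradient inequality \eqref{subgrad}; no compactness argument beyond what guarantees the existence of the maximizer $u(x)$ is needed.

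For part (i), I would observe that the feasible set in the maximization defining $\gamma(x)$ contains the point $v=x$ itself whenever $x\in C$. Evaluating the objective there gives $\langle x-x,F'(x)\rangle=0$, so the maximum over $v\in C$ is at least $0$; hence $\gamma(x)\ge 0$ for every $x\in C$.

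For part (ii), the right-hand inequality $\gamma(x)\ge\langle v-x,F'(x)\rangle$ for every $v\in C$ is just the statement that $\gamma(x)$ is the maximum of this expression over $v\in C$, so it is immediate. For the left-hand inequality, I would use that $u(x)\in C$ is a maximizer, so $\gamma(x)=\langle u(x)-x,F'(x)\rangle$, and then apply the subgradient inequality \eqref{subgrad} with the particular choice $v=u(x)$: since $F$ is convex and finite on $\reals^n$, a subgradient $F'(x)$ exists at every $x\in\reals^n$, and \eqref{subgrad} yields $F(u(x))-F(x)\ge\langle u(x)-x,F'(x)\rangle=\gamma(x)$. Chaining the two inequalities gives $F(u(x))-F(x)\ge\gamma(x)\ge\langle v-x,F'(x)\rangle$ for all $v\in C$ and all $x\in\reals^n$.

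There is essentially no hard step here; the only point worth a remark is that $u(x)$ and hence $\gamma(x)$ are well-defined for every $x\in\reals^n$ (not merely $x\in C$), which follows because $C$ is compact and $v\mapsto\langle v-x,F'(x)\rangle$ is continuous, so the maximum is attained — this is exactly what was noted just before the statement of the lemma, and it is what legitimizes writing $\gamma(x)=\langle u(x)-x,F'(x)\rangle$ in the argument above.
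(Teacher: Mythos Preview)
Your proof is correct and follows essentially the same approach as the paper: part (i) and the right inequality of (ii) come straight from the definition of $\gamma(x)$ (taking $v=x$ for (i)), and the left inequality of (ii) follows from the subgradient inequality applied at $u(x)$. The paper's proof is identical in substance, just more terse.
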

\begin{proof} The proof of (i) and the right inequality of (ii) follows immediately from the
definition of $\gamma (x)$, while the left inequality in (ii) follows from the subgradient inequality for the convex function $F$:
\[ F(u(x)) - F(x) \geq \langle u(x)-x, F'(x)\rangle= \gamma (x).\]  \end{proof}

We are ready to state the convergence properties of ConGradU.
\begin{theorem} \label{thm:fw_stationarity_convex_obj}
Let $F:\reals^n \to \reals$ be a  convex function and let $C\subset \reals^n$ be nonempty
compact. Let  $\{x^j\}$ be the sequence generated by the algorithm ConGradU. Then the
following statements hold:\\
(a) the sequence of function values $F(x^j)$ is monotonically increasing and
$$
\lim_{j\to \infty} \gamma (x^j)=0.
$$
\noindent(b) If for some $j$ the iterate $x^j$ satisfies $\gamma (x^j)=0$, then the algorithm stops with $x^j$ satisfying (FOC). Otherwise the algorithm generates an infinite sequence $\{x^j\}$
with strictly increasing function values $\{F(x^j)\}$.\\
\noindent(c) Moreover, if $F$ is
continuously differentiable, then every limit point of the sequence $\{x^j \}$  converges to a
stationary point.
\end{theorem}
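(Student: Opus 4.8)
The plan is to run the classical ascent/telescoping argument for conditional-gradient type schemes, exploiting the fact already noted in the excerpt that with a unit step the iteration is precisely the fixed-point map $x^{j+1}=u(x^j)$, together with the ascent property recorded in Lemma \ref{basic}. No quadraticity of $F$ is actually needed for this theorem: convexity alone drives ascent through the subgradient inequality.

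For part (a), I would first observe that every iterate stays in $C$: $x^0\in C$ by initialization, and $x^{j+1}\in\argmax\{\langle x-x^j,F'(x^j)\rangle:x\in C\}\subseteq C$. Hence Lemma \ref{basic} applies at each $x^j$, and part (ii) with $u(x^j)=x^{j+1}$ gives the key ascent inequality
\[
F(x^{j+1})-F(x^j)\ \geq\ \gamma(x^j)\ \geq\ 0 ,
\]
the last bound being Lemma \ref{basic}(i), valid since $x^j\in C$. This immediately yields monotonicity of $\{F(x^j)\}$. Since $F$ is convex and finite on $\reals^n$ it is continuous, and $C$ is compact, so $F$ is bounded above on $C$; thus $\{F(x^j)\}$ converges to a finite limit. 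Summing the ascent inequality telescopically gives $\sum_{j=0}^{N}\gamma(x^j)\le F(x^{N+1})-F(x^0)$, a bound uniform in $N$; as each $\gamma(x^j)\ge 0$, the series $\sum_j\gamma(x^j)$ converges, forcing $\gamma(x^j)\to 0$.

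For part (b), I would note that $\gamma(x^j)=0$ is, by the definition \eqref{gap} of $\gamma$, exactly the assertion $\langle v-x^j,F'(x^j)\rangle\le 0$ for all $v\in C$, i.e., $x^j$ satisfies (FOC); moreover $x^j$ then attains that maximum, so $x^j\in\argmax\{\langle x-x^j,F'(x^j)\rangle:x\in C\}$ is a fixed point of $u(\cdot)$ and the scheme terminates (remains) there. If instead $\gamma(x^j)>0$ for every $j$, the ascent inequality is strict, so $\{F(x^j)\}$ is strictly increasing along an infinite sequence.

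For part (c), assume $F\in\mathcal{C}^1$, so $F'=\nabla F$ is continuous. Take any limit point $\bar x$ of $\{x^j\}$, say $x^{j_\ell}\to\bar x$; since $C$ is compact it is closed, so $\bar x\in C$. Fix an arbitrary $v\in C$ and pass to the limit in the inequality $\langle v-x^{j_\ell},\nabla F(x^{j_\ell})\rangle\le\gamma(x^{j_\ell})$: the left-hand side tends to $\langle v-\bar x,\nabla F(\bar x)\rangle$ by continuity of $\nabla F$ and of the inner product, while the right-hand side tends to $0$ by part (a). Hence $\langle v-\bar x,\nabla F(\bar x)\rangle\le 0$ for all $v\in C$, i.e., $\bar x$ satisfies (FOC) and is a stationary point. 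The only delicate point is this limiting step in (c); it works because, for each fixed $v$, the map $x\mapsto\langle v-x,\nabla F(x)\rangle$ is continuous, so no uniform control over $v\in C$ is required — equivalently, one could first verify that $\gamma$ is continuous on $\reals^n$ (a maximum of a jointly continuous function over the fixed compact set $C$) and conclude directly that $\gamma(\bar x)=\lim_\ell\gamma(x^{j_\ell})=0$.
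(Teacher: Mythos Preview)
Your proof is correct and, for parts (a) and (b), essentially identical to the paper's: the same ascent inequality from Lemma~\ref{basic}, the same telescoping bound against $F_*-F(x^0)$, and the same reading of $\gamma(x^j)=0$ as (FOC).

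For part (c) your argument is a mild streamlining of the paper's. The paper, in addition to extracting the subsequence $x^{j}\to x^\infty$, also extracts a further subsequence along which $u(x^j)\to\bar u$, then passes to the limit in \emph{both} relations $\gamma(x^j)=\langle u(x^j)-x^j,\nabla F(x^j)\rangle$ and $\langle v-x^j,\nabla F(x^j)\rangle\le\langle u(x^j)-x^j,\nabla F(x^j)\rangle$ to conclude. You bypass the auxiliary limit $\bar u$ entirely by passing to the limit directly in $\langle v-x^{j_\ell},\nabla F(x^{j_\ell})\rangle\le\gamma(x^{j_\ell})$ for each fixed $v\in C$; this is shorter and uses exactly the same ingredients (continuity of $\nabla F$ and $\gamma(x^j)\to 0$). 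Your alternative remark that $\gamma$ itself is continuous (as a max of a jointly continuous function over a fixed compact set) is also a clean way to close the argument and is not in the paper.
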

\begin{proof} By definition of the iteration of ConGradU, using our notations, the sequence $\{x^j\}$ is well defined
via $x^{j+1}\in u(x^j), \forall j=0, 1\ldots .$ Invoking Lemma \ref{basic}, we obtain
$$
0\leq \gamma (x^j) \leq F(x^{j+1})-F(x^j),\; \forall j=0, 1, \ldots,
$$
showing that the sequence $\{F(x^j)\}$ is monotone increasing. Summing the inequality above
for $j=0, \ldots, N-1$, we have
$$
\sum_{j=0}^{N-1} \gamma (x^j) \leq F(x^N)-F(x^0).
$$
Since $C$ is compact and $F(\cdot)$ is continuous, we also have $F(x^N) \leq \max \{F(x):\; x
\in C\}:=F_*$, and thus it follows from the above inequality that $\sum_{j=0}^{N-1} \gamma
(x^j) \leq F_*-F(x^0)$, and hence the nonnegative series $\sum_{j=0}^\infty \gamma(x^j)$ is
convergent so that $\gamma(x^j)$ converges to 0. Now, since $\gamma (x^j) \geq 0$ for all
$j=0, \ldots$, then if for some $j$ the iterate  $x^j$ is such that $\gamma (x^j)=0$ then the
procedure stops at iteration $j$ with $x^j$ satisfying (FOC). Otherwise, $\gamma (x^j) >0$ and
the iteration generates an infinite sequence with $F(x^{j+1}) > F(x^j)$. In the latter case,
assuming now that $F\in C^1$, we will now prove the last statement of the theorem. Since $C$
is compact, the sequence $\{x^j\}\subset C$ is bounded. Passing to subsequences if necessary,
for any limit point $x^\infty$ of $\{x^j\}$ we thus have $x^j\to x^\infty$. Without loss of generality we
let $u(x^j) \to {\bar u}$. Using the facts
$$ \gamma(x^j)=\langle u(x^j)-x^j, F'(x^j)\rangle\quad\mbox{and}\quad\langle v-x^j,F'(x^j)\rangle \leq
\langle u(x^j)-x^j, F'(x^j) \rangle, \; \forall v \in C,$$
and since $\gamma(x^j)\to 0$ and $F\in\mathcal{C}^1$, passing to the limit over appropriate subsequences in the above relations, it follows
that $\langle {\bar u} -x^\infty, F'(x^\infty)\rangle=0$ and $\langle
v-x^\infty,F'(x^\infty)\rangle \leq \langle {\bar u} -x^\infty, F'(x^\infty) \rangle, \;
\forall v \in C$, and hence $\langle v-x^\infty, F'(x^\infty)\rangle \leq 0, \forall v\in C$,
showing that $x^\infty$ is stationary.
\end{proof}

\begin{remark}\label{rem:journ} (a) For the case of convex
$F$ and bounded polyhedron $C$, as noted in the introduction, Mangasarian \cite{Mang1996}
seems to have been the first work suggesting the possibility of using a unitary step size in
the conditional gradient scheme and proved that the algorithm generates a finite sequence
(thanks to the polyhedrality of $C$) that terminates at a stationary point.

(b) Very recently, the use of a unitary stepsize in the conditional gradient scheme was
rediscovered in \cite{Jour2010} with $C$ being an arbitrary compact set, which is identical to
Algorithm \ref{alg:frank_wolfe}.

(c) The proof of Theorem \ref{thm:fw_stationarity_convex_obj} is patterned after the one given
in \cite{Mang1996}. Note that part (a) also follows from \cite{Jour2010} as well. Furthermore,
under stronger assumptions on $F$ and $C$, \cite[Theorem 4]{Jour2010} also established a
stepsize convergence rate giving an upper estimate on the number of iterations the algorithm
takes to produce a step of small size.


 (d) Finally, as kindly pointed out by a referee, the proof of convergence could also probably be derived
 by using the general approach developed in the classical monograph of Zangwill \cite{Zang69}.
\end{remark}

We end this section with a particular realization of ConGradU for an interesting class of problems given as
$$
(G) \qquad \max_x{\{f(x)+g(|x|) : x\in C\}}
$$
where
$$
\begin{array}{ll}
f:\reals^n\rightarrow\reals&\mbox{is convex},\\
g:\reals^n_+\rightarrow\reals&\mbox{is convex differentiable and monotone decreasing}\footnotemark,
\\
C\subseteq\reals^n&\mbox{is a compact set.}
\end{array}
$$
\footnotetext{By monotone decreasing, we mean componentwise, i.e., each component of $g'(\cdot)$ the gradient of $g$ is such that $g'_{i}(\cdot)_i<0\quad\forall i\in[n].$}

Our interest for this form is mainly driven by and is particularly useful for handling the
case of the approximate $l_0$-penalized problem (cf. Section \ref{ss:l0penalized_pca}), which
precisely has the form of this optimization model with adequate choice of the kernel
$\varphi_p$ that can be used to approximate the $l_0$ norm (cf. Section \ref{s:formulations}).
It will also allow for developing a novel simple scheme for the $l_1$-penalized problem (cf. Section \ref{ss:l1penalized_pca}).

Note that under the stated assumptions for $g$, the composition $g(|x|)$ is not necessarily convex
and thus ConGradU cannot be  applied to problem (G). However, thanks to
the componentwise monotonicity of $g(\cdot)$, it is easy to see
that problem (G) can be recast as an equivalent problem with additional constraints and
variables as follows:
$$
(GG)\qquad \max_{x,y}\{ f(x)+g(y) :  |x|\leq y, x\in C \}.
$$
Note that, without loss of generality, an additional upper bound can be imposed on $y$ in order to enforce compactness of the feasible region of problem (GG) (e.g.,  by setting the upper bound on $y$ as $y_c:=\argmax\{|x|: x \in C\}$), but this need not be computed in order to establish our following result.  Clearly, the new objective of (GG) is now convex in $(x,y)$ and thus we can apply ConGradU. We show below that the main iteration in that case leads to an attractive weighted
$l_1$-norm maximization problem, which, in turn, as shown in Section \ref{s:props} can be
solved in closed form for the cases of interest, namely when $C$ is the compact set described
by the unit sphere or unit ball.

\begin{proposition} \label{prop:nonsmooth_condtional_gradient}
The algorithm ConGradU applied to problem (GG) generates a sequence $\{x^j\}$ by solving the weighted $l_1$-norm maximization problem
\begin{equation}\label{eq:fw_direction_search_modified}
x^0 \in C, \; x^{j+1}=\argmax \{\langle a^j,x\rangle -\sum_i{w^j_i|x_i|} : x\in C \}, j=0,
\ldots ,
\end{equation}
where $w^j=-g'(|x^j|)>0$ and $a^j=f'(x^j) \in \reals^n$.
\end{proposition}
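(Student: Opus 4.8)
The plan is to apply ConGradU (Algorithm~\ref{alg:frank_wolfe}) directly to the lifted problem (GG) and then simplify the resulting linear maximization subproblem by exploiting the componentwise monotonicity of $g$. First I would record that the objective $\Phi(x,y):=f(x)+g(y)$ of (GG) is convex on $\reals^n\times\reals^n$ (a sum of convex functions of disjoint blocks of variables) and that, after appending the bound $y\le y_c$ mentioned in the remark following (GG), the feasible set $D:=\{(x,y):\ |x|\le y\le y_c,\ x\in C\}$ is compact, so ConGradU applies with subgradient $\Phi'(x^j,y^j)=(f'(x^j),g'(y^j))$. Initializing at $(x^0,y^0)=(x^0,|x^0|)\in D$, the main step of ConGradU reads
$$(x^{j+1},y^{j+1})\in\argmax\{\langle x-x^j,f'(x^j)\rangle+\langle y-y^j,g'(y^j)\rangle:\ (x,y)\in D\},$$
and discarding the constant terms this is equivalent to $\max\{\langle f'(x^j),x\rangle+\langle g'(y^j),y\rangle:\ |x|\le y\le y_c,\ x\in C\}$.

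The key reduction is then to carry out the maximization over $y$ first. For each fixed $x\in C$ the admissible $y$ form the box $\{y:\ |x|\le y\le y_c\}$, and $\langle g'(y^j),y\rangle=\sum_i g'(y^j)_i\,y_i$ separates componentwise over this box. By the monotone decreasing hypothesis on $g$ we have $g'(y^j)_i<0$ for every $i$, so each term $g'(y^j)_i\,y_i$ is strictly decreasing in $y_i$ and is maximized at the smallest feasible value $y_i=|x_i|$. Hence at any maximizer one has $y^{j+1}=|x^{j+1}|$; moreover, since $y^0=|x^0|$, it follows by induction that $y^j=|x^j|$ for all $j$, so $g'(y^j)=g'(|x^j|)$. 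Substituting $y=|x|$ reduces the subproblem to $\max\{\langle f'(x^j),x\rangle+\langle g'(|x^j|),|x|\rangle:\ x\in C\}$, and writing $a^j:=f'(x^j)$ and $w^j:=-g'(|x^j|)>0$ gives exactly the weighted $l_1$-maximization iteration \eqref{eq:fw_direction_search_modified}; well-definedness of the $\argmax$ follows from compactness of $C$ and continuity of the objective.

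The only point that really requires care — and the main obstacle — is the legitimacy of the ``optimize $y$ componentwise'' step: it hinges on the feasible region of (GG) being, for each fixed $x$, a product/box in the $y$-variables, and on the strict sign $g'(\cdot)<0$ supplied by the monotonicity assumption, which simultaneously forces $y^{j+1}=|x^{j+1}|$ and makes the weights $w^j$ strictly positive. Everything else (convexity of $\Phi$, compactness of $D$, dropping constant terms in the linearized objective, and the inductive maintenance of $y^j=|x^j|$) is routine.
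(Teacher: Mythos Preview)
Your proposal is correct and follows essentially the same route as the paper: apply ConGradU to the lifted convex objective $f(x)+g(y)$ over the $(x,y)$-feasible set, perform the inner maximization in $y$ first using the strict negativity of $g'$ to force $y=|x|$, and then maintain $y^j=|x^j|$ by induction to reach~\eqref{eq:fw_direction_search_modified}. If anything, you are slightly more explicit than the paper about the box structure of the $y$-constraints and about the role of the compactifying bound $y\le y_c$.
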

\begin{proof}
Applying ConGradU to the convex function $H(x,y):=f(x)+g(y)$, with $y^0=|x^0|, x^0\in C$
we obtain,
\begin{eqnarray*} \label{eq:fw_direction_search_smooth}
(x^{j+1},y^{j+1})&=&\argmax_{x,y}\{\langle x-x^j, f'(x^j)\rangle+\langle y-y^j,
g'(y^j)\rangle : x\in C, |x|\leq y\}\\
 &=& \argmax_{x \in C} \left\{ \langle x-x^j, f'(x^j)\rangle + \max_{y}\{ \langle y-y^j, g'(y^j)\rangle
 : |x|\leq y\} \right\}\\
&=& \argmax_{x \in C} \left\{ \langle x-x^j, f'(x^j)\rangle +  \langle |x|-y^j, g'(y^j)\rangle
\right\},
\end{eqnarray*}
where the last max computation with respect to $y$ uses the fact that $g'$ is monotone
decreasing. It is also clear that, given the initialization $y^0=|x^0|$ and $y^{j+1}=\argmax{\{\langle y,g'(y^j)\rangle:|x^{j+1}|\leq y\}}$, we have $y^j=|x^j|$ for all $j=0,1,\ldots$ Omitting constant terms, the last iteration
can be simply rewritten as
$$x^{j+1}=\argmax_{x \in C}\{ \langle x, f'(x^j)\rangle +  \langle |x|,
g'(|x^j|) \rangle \},$$
which with $w^j:=-g'(|x^j|)>0$ proves the desired result stated in (\ref{eq:fw_direction_search_modified}). \end{proof}


\section{A Simple Toolbox for Building Simple Algorithms} \label{s:props}
The algorithms discussed in this paper are based on the fact that the $l_0$-constrained PCA
problem as well as many of the penalized and constrained $l_0$ and $l_1$ optimization problems
that are solved by the proposed conditional gradient algorithm have iterations that either have
closed form solutions or are easily solvable. Specifically, the main step of ConGradU
maximizes a linear function over a compact set, and the following lemmas and propositions show
that for certain compact sets (e.g $\{x\in\reals^n:\|x\|_2=1,\|x\|_0\le k\}$ and
$\{x\in\reals^n:\|x\|_2=1,\|x\|_1\le k\}$), these subproblems are easy to solve.

In addition, propositions are given that will be used to reformulate
(in Section \ref{s:sparse_pca}) problems such as $l_0$ and $l_1$-penalized PCA,
 which have neither convex nor concave objectives, to problems that maximize a convex objective function.
 The propositions for maximizing linear functions over compact sets can then be used in ConGradU
 as applied to the reformulated $l_0$ and $l_1$-penalized PCA problems.
 Combined with the conditional gradient algorithm discussed above,
 these propositions are the only tools required throughout the remainder of the paper.

We start with an obvious but very useful lemma that is used in all of the following propositions.
\begin{lemma} \label{lemma:lin_over l2_ball}
Given $0\neq a\in\reals^n$,
$$ \max\{\langle a, x \rangle : \|x\|_2=1,x\in\reals^n\}=
\max\{\langle a, x \rangle : \|x\|_2\leq1,x\in\reals^n\}=\|a\|_2,$$ with maximizer
$x^*=a/\|a\|_2.$
\end{lemma}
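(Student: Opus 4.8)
The plan is to handle the two extremal problems together by a single chain of inequalities. First I would observe that the feasible set of the equality-constrained problem, $\{x : \|x\|_2 = 1\}$, is contained in that of the inequality-constrained problem, $\{x : \|x\|_2 \le 1\}$, so the maximum over the sphere is at most the maximum over the ball; this gives one of the two needed comparisons for free. Next I would bound the ball problem from above using the Cauchy--Schwarz inequality: for any $x$ with $\|x\|_2 \le 1$ we have $\langle a, x\rangle \le \|a\|_2 \|x\|_2 \le \|a\|_2$. This shows $\max\{\langle a,x\rangle : \|x\|_2 \le 1\} \le \|a\|_2$.

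To close the loop I would exhibit the candidate $x^* = a/\|a\|_2$, which is well-defined since $a \ne 0$. It satisfies $\|x^*\|_2 = 1$, hence is feasible for \emph{both} problems, and $\langle a, x^*\rangle = \langle a, a\rangle/\|a\|_2 = \|a\|_2$. Therefore the maximum over the sphere is at least $\|a\|_2$. Stringing these together,
\[
\|a\|_2 \le \max\{\langle a,x\rangle : \|x\|_2 = 1\} \le \max\{\langle a,x\rangle : \|x\|_2 \le 1\} \le \|a\|_2,
\]
so all three quantities coincide and equal $\|a\|_2$, with $x^*=a/\|a\|_2$ attaining the value in each case.

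There is essentially no obstacle here — the only point requiring the hypothesis $a \ne 0$ is that $x^* = a/\|a\|_2$ be well-defined (and indeed the claim as stated would be false for $a = 0$ only in the trivial sense that every unit vector is a maximizer). One could optionally remark, for completeness, that when $a \ne 0$ the maximizer $x^* = a/\|a\|_2$ is the \emph{unique} maximizer on the sphere (strict Cauchy--Schwarz), whereas on the ball it is unique as well since any maximizer must have $\|x\|_2 = 1$ and equality in Cauchy--Schwarz; but this is not needed for the statement and I would keep the proof to the three-line inequality chain above.
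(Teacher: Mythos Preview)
Your proof is correct and follows exactly the approach the paper uses: the paper's proof is simply ``Immediate from Cauchy--Schwarz inequality,'' and you have spelled out precisely that argument. Your additional remarks on uniqueness are fine but, as you note, unnecessary for the stated lemma.
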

\begin{proof} Immediate from Cauchy-Schwarz inequality. \end{proof}

The following propositions make use of the following operator:
\begin{definition}
Given any $a\in\reals^n$, define  the operator $T_k:\reals^n\rightarrow\reals^n$ by
\[
T_k(a):=\argmin_x{\{\|x-a\|_2^2 : \|x\|_0\leq k, x\in\reals^n\}}.
\]
\end{definition}
This operator is thus the best $k$-sparse approximation of a given vector $a$. Despite the
nonconvexity of the constraint, it is easy to see that $(T_k(a))_i=a_i$ for the $k$ largest
entries (in absolute value) of $a$ and $(T_k(x))_i=0$ otherwise. In case the $k$ largest entries
are not uniquely defined, we select the smallest possible indices.

In other words, without loss of generality, with $a \in \reals^n$ such $|a_1|\geq\ldots\geq |a_n|$, we have
\[
(T_k(a))_i=\left\{
        \begin{array}{ll}
          a_i, & i\leq k; \\
          0, & \mbox{otherwise}.
        \end{array}
      \right.
\]
Computing $T_k(\cdot)$ only requires determining the $k^{th}$ largest number of a vector of $n$ numbers which can be done in $O(n)$ time \cite{Blum1973} and zeroing out the proper components in one more pass of the $n$ numbers.

The next proposition is an extension of Lemma \ref{lemma:lin_over l2_ball} to
$l_0$-constrained problems. This is the simple result that maximizing a linear function over the nonconvex set $\{x\in\reals^n:\|x\|_2=1, \|x\|_0\leq
k\}$ is equally simple and can be solved in $O(n)$ time.

\begin{proposition} \label{prop:lin_over_l2ball_card}
Given $ 0\neq a\in\reals^n$,
\begin{equation} \label{eq:lin_over_l2ball_card}
 \max_x{\{\langle a,x\rangle : \|x\|_2=1, \|x\|_0\leq k, x\in\reals^n\}} = \|T_k(a)\|_2
\end{equation}
with solution obtained at
\[x^*=\frac{T_k(a)}{\|T_k(a)\|_2}.\]
\end{proposition}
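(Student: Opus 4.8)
The plan is to reduce the problem to a family of fixed-support subproblems, solve each by the Cauchy--Schwarz lemma already established, and then optimize over the choice of support. Concretely, for a subset $S\subseteq[n]$ with $|S|\le k$, let $\reals^n_S=\{x\in\reals^n : x_i=0 \ \forall i\notin S\}$. Every feasible point of the maximization problem lies in $\reals^n_S$ for some such $S$, so
$$
\max\{\langle a,x\rangle : \|x\|_2=1,\ \|x\|_0\le k\}
=\max_{|S|\le k}\ \max\{\langle a,x\rangle : \|x\|_2=1,\ x\in\reals^n_S\}.
$$
For the inner problem, identify $\reals^n_S$ with $\reals^{|S|}$ and apply Lemma \ref{lemma:lin_over l2_ball} (Cauchy--Schwarz) to the subvector $a_S=(a_i)_{i\in S}$: the inner maximum equals $\|a_S\|_2=\bigl(\sum_{i\in S}a_i^2\bigr)^{1/2}$, attained at the vector supported on $S$ equal to $a_S/\|a_S\|_2$ there (this is well defined whenever $a_S\neq 0$).

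The remaining step is the combinatorial optimization $\max_{|S|\le k}\sum_{i\in S}a_i^2$. Since each summand $a_i^2\ge 0$, the sum is largest when $S$ collects the $k$ indices with the largest values of $a_i^2$, i.e.\ the $k$ largest $|a_i|$; with the tie-breaking convention fixed in the definition of $T_k$, an optimal $S$ is exactly the support of $T_k(a)$. Hence the optimal value is $\bigl(\sum_{i}(T_k(a))_i^2\bigr)^{1/2}=\|T_k(a)\|_2$. Because $a\neq 0$ and $k\ge 1$, the largest entry of $a$ in absolute value is nonzero, so $T_k(a)\neq 0$ and $\|T_k(a)\|_2>0$; thus $x^*=T_k(a)/\|T_k(a)\|_2$ is well defined, satisfies $\|x^*\|_2=1$ and $\|x^*\|_0\le k$, and by the inner-problem analysis attains the value $\|T_k(a)\|_2$. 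This proves both the value formula (\ref{eq:lin_over_l2ball_card}) and that the stated $x^*$ is a maximizer.

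There is essentially no hard step here: the only thing to watch is the bookkeeping of the support decomposition together with the convention handling non-unique $k$-largest entries, and the trivial but necessary observation that $a\neq0$ guarantees $T_k(a)\neq0$ so that normalization is legitimate. Optionally, one can phrase the first displayed reduction slightly more carefully by noting that the outer maximum over the finitely many sets $S$ is attained (each inner problem has a maximizer by compactness and continuity), which makes the chain of equalities rigorous without appealing to suprema.
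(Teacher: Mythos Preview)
Your proof is correct and follows essentially the same approach as the paper: both decompose the feasible set by support, solve each fixed-support problem via Lemma~\ref{lemma:lin_over l2_ball} (Cauchy--Schwarz) to obtain $\|a_S\|_2$, and then pick the support consisting of the $k$ largest entries of $|a|$. Your write-up is somewhat more careful about tie-breaking and the nondegeneracy $T_k(a)\neq 0$, but the underlying argument is the same.
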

\begin{proof}
By Lemma \ref{lemma:lin_over l2_ball}, the optimal value of problem
(\ref{eq:lin_over_l2ball_card}) is $\sqrt{\sum_{i\in\mathcal{I}}{a_i^2}}$ for some subset of indices  $\mathcal{I}\subseteq[n]$ with $|\mathcal{I}|\leq k$. The set $\mathcal{I}$ that maximizes this value clearly contains the indices of the $k$ largest elements of the vector $|a|$. Thus, by definition of $T_k(a)$, solving problem (\ref{eq:lin_over_l2ball_card}) is
equivalent to solving
\[
\max_x{\{\langle x,T_k(a)\rangle : \|x\|_2=1, x\in\reals^n\}}
\]
from which the result follows by Lemma \ref{lemma:lin_over l2_ball}.
\end{proof}

Another version of this result gives the solution with a squared objective.
\begin{proposition} \label{prop:squared_lin_over_l2ball_card}
Given $a\in\reals^n$,
\[ \max_x{\{\langle a,x\rangle^2 : \|x\|_2=1, \|x\|_0\leq k, x\in\reals^n\}} = \|T_k(a)\|^2_2\]
with solution obtained at
\[x^*=\frac{T_k(a)}{\|T_k(a)\|_2}.\]
\end{proposition}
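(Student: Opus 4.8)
The plan is to reduce Proposition~\ref{prop:squared_lin_over_l2ball_card} to the already-established Proposition~\ref{prop:lin_over_l2ball_card} by handling the two signs of the inner product separately. Observe that for any $x$ with $\|x\|_2=1$ and $\|x\|_0\le k$, the vector $-x$ also lies in the feasible set $\{x:\|x\|_2=1,\|x\|_0\le k\}$, and $\langle a,-x\rangle = -\langle a,x\rangle$, so the feasible set is symmetric under negation. Consequently
\[
\max_x\{\langle a,x\rangle^2 : \|x\|_2=1,\|x\|_0\le k\}
= \left(\max_x\{\langle a,x\rangle : \|x\|_2=1,\|x\|_0\le k\}\right)^2,
\]
since squaring is monotone on $\reals_+$ and the achievable values of $\langle a,x\rangle$ form a set symmetric about $0$. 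This is the one small observation the proof hinges on; everything else is a direct appeal to the previous proposition.

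Next I would invoke Proposition~\ref{prop:lin_over_l2ball_card}, which gives
\[
\max_x\{\langle a,x\rangle : \|x\|_2=1,\|x\|_0\le k\} = \|T_k(a)\|_2
\]
with the maximizer $x^* = T_k(a)/\|T_k(a)\|_2$ (assuming $a\neq 0$; the case $a=0$ is trivial since then the objective is identically zero and any feasible point works). Squaring both sides yields the claimed optimal value $\|T_k(a)\|_2^2$. Since the maximum of $\langle a,x\rangle^2$ is attained exactly where $\langle a,x\rangle$ attains either its maximum or its minimum over the feasible set, and $x^* = T_k(a)/\|T_k(a)\|_2$ attains the maximum of $\langle a,x\rangle$, it follows that the same $x^*$ attains the maximum of $\langle a,x\rangle^2$, establishing the stated maximizer.

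I do not anticipate any genuine obstacle here: the result is essentially a corollary of Proposition~\ref{prop:lin_over_l2ball_card}. The only point requiring a word of care is the symmetry argument justifying that squaring the optimal value of the linear problem gives the optimal value of the squared problem — one should note both that $\langle a, \cdot\rangle$ can be made nonnegative at the optimum (by flipping the sign of $x$ if necessary) and that the feasible set is invariant under $x\mapsto -x$. A secondary minor point is the degenerate case $a=0$, which should be mentioned to make the statement literally correct even though the interesting content is for $a\neq 0$. Beyond that, the proof is a two-line reduction.
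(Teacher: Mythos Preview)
Your proposal is correct and follows essentially the same approach as the paper: both reduce directly to Proposition~\ref{prop:lin_over_l2ball_card} by observing that, since the optimal value of the linear problem is nonnegative, squaring does not change the optimal solution. Your version is simply more explicit about the symmetry of the feasible set under $x\mapsto -x$ (which is the content behind the paper's one-line remark) and adds the harmless caveat for $a=0$.
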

\begin{proof}
Notice that the optimal objective value in Proposition \ref{prop:lin_over_l2ball_card} is nonnegative, and hence the squared objective value here does not change the optimal solution to the problem with linear objective.  The result follows.
\end{proof}


The above propositions will serve to derive the novel schemes given in Section \ref{s:sparse_pca}.  The next result is a modification of Proposition \ref{prop:squared_lin_over_l2ball_card}.  It shows that the $l_0$-penalized version of maximizing a squared linear function yields a closed-form solution.

\begin{proposition} \label{prop:squared_l0_pen_over_l2ball}
Given $a\in\reals^n,s>0$,
\[ \max_x{\{\langle a,x\rangle^2 -s\|x\|_0: \|x\|_2=1, x\in\reals^n\}}=\displaystyle\sum_{i=1}^n{(a_i^2-s)_+}\]
is solved by
\[x_i^*=
\frac{a_i[\mbox{sgn}(a_i^2-s)]_+}{\sqrt{\sum_{j=1}^n{a^2_j[\mbox{sgn}(a_j^2-s)]_+}}}.
\]
\end{proposition}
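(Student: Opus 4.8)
The plan is to peel off the cardinality penalty by first optimizing over the \emph{support} of $x$, which turns the problem into a purely combinatorial one. Concretely, I would partition the feasible set according to $S:=\{i:x_i\neq 0\}$, ranging over nonempty subsets of $[n]$: on each such slice $\|x\|_0=|S|$ is constant, so the inner problem is just $\max\{\langle a,x\rangle^2:\|x\|_2=1,\ \mbox{supp}(x)\subseteq S\}$. Writing $a_S$ for the restriction of $a$ to the coordinates in $S$, Cauchy--Schwarz (equivalently Lemma~\ref{lemma:lin_over l2_ball} together with the squaring observation already used in the proof of Proposition~\ref{prop:squared_lin_over_l2ball_card}) gives that this inner value equals $\|a_S\|_2^2=\sum_{i\in S}a_i^2$, attained at $x_S=a_S/\|a_S\|_2$ whenever $a_S\neq 0$. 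If $a_S=0$ the inner value is $0$ while the penalty $s|S|$ is positive, so such $S$ are never optimal; hence without loss of generality $S\subseteq\{i:a_i\neq0\}$.

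Combining the two levels of maximization yields
$$\max_x\{\langle a,x\rangle^2-s\|x\|_0:\|x\|_2=1\}=\max_{\emptyset\neq S\subseteq[n]}\Big(\sum_{i\in S}a_i^2-s|S|\Big)=\max_{\emptyset\neq S\subseteq[n]}\sum_{i\in S}(a_i^2-s),$$
and the last maximization is separable over the indices: each $i$ contributes $a_i^2-s$, so the optimal support is $S^\star=\{i:a_i^2>s\}$, with optimal value $\sum_{i\in S^\star}(a_i^2-s)=\sum_{i=1}^n(a_i^2-s)_+$. Substituting $S=S^\star$ into $x_S=a_S/\|a_S\|_2$ recovers the claimed maximizer: for $i\in S^\star$ we have $[\mbox{sgn}(a_i^2-s)]_+=1$ and the $i$th coordinate is $a_i/\|a_{S^\star}\|_2$, for $i\notin S^\star$ both $[\mbox{sgn}(a_i^2-s)]_+$ and $x_i^\star$ vanish, and the normalizer is $\|a_{S^\star}\|_2=\sqrt{\sum_{j=1}^n a_j^2[\mbox{sgn}(a_j^2-s)]_+}$, which is exactly the stated formula.

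The argument is short and the only delicate point is the edge-case bookkeeping: indices with $a_i^2=s$ may be placed in $S$ or not with no effect on the value, and the whole statement (together with well-definedness of the closed form) implicitly assumes the non-trivial regime $\max_i a_i^2>s$, so that $S^\star\neq\emptyset$ and the constraint $\|x\|_2=1$ can actually be met by the optimal support. I would spell out this assumption explicitly (it is the analogue of the restriction $s<\|A\|_\infty$ imposed in Section~\ref{s:formulations} to avoid the trivial solution), after which the rest follows mechanically from Lemma~\ref{lemma:lin_over l2_ball}. I expect this edge-case discussion --- not the main decoupling step --- to be the only part needing care.
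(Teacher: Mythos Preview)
Your proof is correct and follows essentially the same decoupling strategy as the paper: both reduce the problem to a combinatorial outer maximization after solving the inner quadratic via Cauchy--Schwarz, arriving at $\sum_i (a_i^2-s)_+$. The only cosmetic difference is that the paper parameterizes the outer problem by the cardinality $p$ (after sorting $|a_i|$ and invoking Proposition~\ref{prop:squared_lin_over_l2ball_card}) rather than directly by the support $S$ as you do; your version is slightly more direct and your explicit discussion of the edge case $S^\star=\emptyset$ is a welcome addition that the paper omits.
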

\begin{proof}
Assume without loss of generality that $|a_1|\ge\ldots\ge|a_n|$. The problem can be rewritten as
\[ \max_{p\in[n]}{\{-sp+\max_x{\{\langle a,x\rangle^2: \|x\|_2=1,\|x\|_0\leq p, x\in\reals^n\}}\}}. \]
Using Proposition \ref{prop:squared_lin_over_l2ball_card}, the inner maximization in $x$ is solved at
\[
x_i^*=\left\{
        \begin{array}{ll}
          a_i/\sqrt{\sum_{j=1}^p{a^2_j}}, & i\leq p; \\
          0, & \mbox{otherwise},
        \end{array}
      \right.
\]
and the problem is equal to
\[ \max_{p\in[n]}{\{-sp+\|T_p(a)\|_2^2\}}=\max_{p\in[n]}{\{\displaystyle\sum_{i=1}^p{a_i^2}-sp\}} =\max_{p\in[n]}{\{\displaystyle\sum_{i=1}^p{(a_i^2-s)}\}}= \displaystyle\sum_{i=1}^n{(a_i^2-s)_+}. \]
Notice that the optimal $p$ is the largest index $i$ such that $a_i^2\ge s$
which makes the above expression for $x^*$ equivalent to the expression in the proposition.
\end{proof}

Our next two results are concerned with $l_1$-penalized/constrained optimization problems.
First, it is useful to recall the following well-known operators which are particular
instances of the so-called Moreau's proximal map \cite{More65}; see, for instance, \cite{CW05} for these results and many more. Given $a\in\reals^n$ and $W$ an $n\times
n$ diagonal matrix $W=\mbox{diag}(w),\; w\in \reals^n$ with positive entries $w_i$, let
$$
\|Wx\|_1:=\sum_{i=1}^n w_i |x_i|;\; \mathbb{B}_\infty^w :=\{x \in \reals^n: \|W^{-1}x
\|_\infty \leq 1 \}.
$$
Then,
\begin{eqnarray}\label{tres-proj}
S_w(a)&:=& \argmin_{x}\{\frac{1}{2}\|x-a\|^2_2 + \|Wx\|_1\}=(|a|- w)_{+}\mbox{sgn}(a), \; \label{stresh}\\
\Pi_{\mathbb{B}_\infty^w }(a) &:=& \argmin_{x}\{\|x-a\|_2: x \in \mathbb{B}_\infty^w \}=
\mbox{sgn}(a)\min\{w,|a|\}=a -S_w(a), \label{proj}
\end{eqnarray}
where $S_w(a)$ and $\Pi_{\mathbb{B}_\infty^w }(a)$ are respectively known as the soft-thresholding operator and the projection operator.

\begin{proposition} \label{prop:lin_weighted_l1_pen_over_l2ball}
For $a \in\reals^n$, $w \in \reals^n_{++}$, and $W=\mbox{diag}(w)$
\[ \max{\{\langle a, x\rangle - \|Wx\|_1: \|x\|_2\leq 1, x\in\reals^n\}}=
\sqrt{\displaystyle\sum_{i=1}^n{(|a_i|-w_i)_+^2}}=\|S_w(a)\| \]
 which is solved by \[x^*= S_w(a)/\|S_w(a)\|_2.\]
\end{proposition}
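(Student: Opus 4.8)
The plan is to prove the upper bound by a short chain of elementary inequalities ending with Cauchy--Schwarz, and then to check by direct substitution that the proposed vector $x^*$ attains it.

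First I would fix any feasible $x$, i.e. $\|x\|_2\le 1$, and estimate the objective coordinatewise. Since $a_ix_i\le |a_i|\,|x_i|$ and $|x_i|\ge 0$,
\[
\langle a,x\rangle-\|Wx\|_1=\sum_{i=1}^n\bigl(a_ix_i-w_i|x_i|\bigr)\le\sum_{i=1}^n(|a_i|-w_i)|x_i|\le\sum_{i=1}^n(|a_i|-w_i)_+|x_i|,
\]
using $t\le t_+$ in the last step. Applying the Cauchy--Schwarz inequality to the vectors with components $(|a_i|-w_i)_+$ and $|x_i|$, and then $\|x\|_2\le 1$, the right-hand side is bounded by
\[
\sqrt{\sum_{i=1}^n(|a_i|-w_i)_+^2}\;\|x\|_2\le\sqrt{\sum_{i=1}^n(|a_i|-w_i)_+^2}=\|S_w(a)\|_2,
\]
which is exactly the claimed optimal value, since $S_w(a)=(|a|-w)_+\,\mbox{sgn}(a)$ has $i$-th component of modulus $(|a_i|-w_i)_+$.

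It then remains to verify attainment. Assume first $S_w(a)\ne 0$ and take $x^*=S_w(a)/\|S_w(a)\|_2$, which is feasible since $\|x^*\|_2=1\le1$. For each $i$, $x^*_i=(|a_i|-w_i)_+\,\mbox{sgn}(a_i)/\|S_w(a)\|_2$, so
\[
a_ix^*_i-w_i|x^*_i|=\frac{(|a_i|-w_i)\,(|a_i|-w_i)_+}{\|S_w(a)\|_2}=\frac{(|a_i|-w_i)_+^2}{\|S_w(a)\|_2},
\]
because $t\,t_+=t_+^2$ for every real $t$. Summing over $i$ gives $\|S_w(a)\|_2^2/\|S_w(a)\|_2=\|S_w(a)\|_2$, matching the upper bound, so $x^*$ is optimal. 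The degenerate case $S_w(a)=0$, i.e. $|a_i|\le w_i$ for all $i$, is immediate: the upper bound above equals $0$, and $x=0$ is feasible with objective value $0=\|S_w(a)\|_2$.

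I do not expect any serious obstacle; the only points that need care are the sign bookkeeping (orienting $a_ix_i\le|a_i|\,|x_i|$ so that the choice $x^*_i\propto(|a_i|-w_i)_+\,\mbox{sgn}(a_i)$ simultaneously preserves feasibility and turns every inequality above into an equality) and the treatment of the degenerate case $S_w(a)=0$. As a cross-check I would also note the alternative, more structural derivation: an optimal $x$ can always be taken with $\mbox{sgn}(x_i)=\mbox{sgn}(a_i)$, since flipping the sign of a coordinate of $x$ to agree with that of $a$ cannot decrease $\langle a,x\rangle$ while leaving both $\|x\|_2$ and $\|Wx\|_1$ unchanged; this reduces the problem to $\max\{\langle a-w,x\rangle:\|x\|_2\le1,\ x\ge 0\}$, a linear maximization over the nonnegative part of the unit ball whose value is $\|(a-w)_+\|_2$, attained at $(a-w)_+/\|(a-w)_+\|_2$ by the Cauchy--Schwarz argument used in Lemma \ref{lemma:lin_over l2_ball}, and undoing the signs recovers the stated formula.
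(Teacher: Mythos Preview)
Your proof is correct, but it takes a different route from the paper's. The paper argues by duality: it writes $\|Wx\|_1=\max_{z\in\mathbb{B}_\infty^w}\langle z,x\rangle$, so the problem becomes a max--min, swaps to min--max via convex duality, evaluates the inner $\max_{\|x\|_2\le1}\langle a-z,x\rangle=\|a-z\|_2$ by Lemma~\ref{lemma:lin_over l2_ball}, and then recognizes the outer minimization as projection onto $\mathbb{B}_\infty^w$, invoking the identity $a-\Pi_{\mathbb{B}_\infty^w}(a)=S_w(a)$ from~(\ref{stresh})--(\ref{proj}).

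Your argument is more elementary and fully self-contained: the coordinatewise chain $a_ix_i-w_i|x_i|\le(|a_i|-w_i)_+|x_i|$ followed by Cauchy--Schwarz gives the upper bound without any appeal to duality or to the soft-thresholding/projection relations, and the explicit verification of attainment (including the degenerate case $S_w(a)=0$, which the paper does not mention) closes the argument cleanly. The paper's route has the advantage of revealing the structural link between this maximization and the Moreau proximal map---a connection reused in Proposition~\ref{prop:lin_over_l2ball_l1ball}---whereas yours is shorter and avoids the minimax machinery entirely. Your cross-check via the sign reduction is also sound; just note that after reducing to $\mbox{sgn}(x_i)=\mbox{sgn}(a_i)$ the linear form should read $\langle\,|a|-w,\,y\,\rangle$ with $y=|x|\ge0$ rather than $\langle a-w,x\rangle$, a minor notational slip.
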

\begin{proof} By the H\"{o}lder inequality, we have $\|Wx\|_1 =
\max_{\|v\|_\infty \leq 1} \langle v, Wx \rangle=\max \{\langle z, x \rangle: z \in
\mathbb{B}_\infty^w \}.$ Using the latter, we obtain
\begin{eqnarray*}
\max\{\langle a, x\rangle - \|Wx\|_1: \|x\|_2\leq 1\}&=&\max_{\|x\|_2\leq 1}\min_{z\in
\mathbb{B}_\infty^w } \langle a - z, x \rangle \\
&=& \min_{z\in \mathbb{B}_\infty^w }\max_{\|x\|_2\leq 1}\langle a - z, x \rangle \\
&=& \min\{ \| a -z \|_2: z\in \mathbb{B}_\infty^w\}=\|S_w(a)\|_2,
\end{eqnarray*}
where the second equality follows from standard min-max duality \cite{Rock1970}, the third from Lemma \ref{lemma:lin_over l2_ball}, and the last one from using the relations (\ref{stresh})-(\ref{proj}), where the optimal $z^*=a-S_w(a)$ and $x^*$ follows from Lemma \ref{lemma:lin_over l2_ball}.
\end{proof}

We next turn to the $l_2/l_1$-constrained problem,
\begin{equation}\label{l1pca}
\max \{ \langle a, x \rangle  : \|x\|_2 \leq 1, \|x\|_1\leq k, x\in\reals^n\},
\end{equation}
and state a result similar to Proposition \ref{prop:lin_over_l2ball_card} for maximizing
a linear function over the intersection of the $l_2$ unit ball with an $l_1$ constraint.  While maximizing over the intersection of the $l_2$ unit ball with an $l_0$ ball has an analytic solution, here we need an additional simple one dimensional search to express the solution of (\ref{l1pca}) via its dual.

\begin{proposition} \label{prop:lin_over_l2ball_l1ball}
Given $a\in\reals^n$, we have
\begin{equation}\label{eq:l1_l2_constrained_one_dim}
\max\{\langle a, x \rangle  : \|x\|_2\leq 1, \|x\|_1\leq k, x\in\reals^n\}= \min_{\lambda\ge0}\{\lambda k + \|S_{\lambda e}(a)\|_2\},
\end{equation}
the right hand side being a  dual of (\ref{l1pca}). Moreover, if $\lambda^* $ solves the one-dimensional dual, then an optimal solution of (\ref{l1pca}) is given by $x^*(\lambda^*)$ where:
\begin{equation}\label{xl1pca}
x^*(\lambda) = S_{\lambda e}(a)/\|S_{\lambda e}(a)\|_2, \; (e\equiv (1, \ldots ,1)\in
\reals^n).
\end{equation}
\end{proposition}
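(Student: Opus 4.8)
The plan is to derive the dual of problem~(\ref{l1pca}) via Lagrangian duality applied only to the $l_1$ constraint, keeping the $l_2$ ball as the ``easy'' set over which the inner maximization is performed in closed form. First I would introduce a multiplier $\lambda \ge 0$ for the constraint $\|x\|_1 \le k$ and form the partial Lagrangian
\[
L(x,\lambda) = \langle a, x\rangle - \lambda(\|x\|_1 - k) = \lambda k + \langle a, x\rangle - \lambda\|x\|_1,
\]
so that the dual function is $q(\lambda) = \lambda k + \max\{\langle a,x\rangle - \lambda\|x\|_1 : \|x\|_2 \le 1\}$. The inner maximization is exactly the one handled by Proposition~\ref{prop:lin_weighted_l1_pen_over_l2ball} with the weight vector $w = \lambda e$ (all weights equal to $\lambda$), which gives $\max\{\langle a,x\rangle - \lambda\|x\|_1 : \|x\|_2 \le 1\} = \|S_{\lambda e}(a)\|_2$, attained at $x^*(\lambda) = S_{\lambda e}(a)/\|S_{\lambda e}(a)\|_2$ (when this is nonzero; otherwise $x^*(\lambda) = 0$). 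Hence $q(\lambda) = \lambda k + \|S_{\lambda e}(a)\|_2$ and the dual problem is $\min_{\lambda \ge 0} q(\lambda)$, which establishes that the right-hand side of~(\ref{eq:l1_l2_constrained_one_dim}) is indeed a dual of~(\ref{l1pca}); weak duality gives $\le$ immediately.

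The substantive step is to prove that strong duality holds, i.e.\ that there is no gap, so the $\le$ can be upgraded to equality. For this I would invoke a standard Slater-type / convex-duality argument: the primal~(\ref{l1pca}) maximizes a \emph{linear} (hence concave) objective over the convex compact set $\{x : \|x\|_2 \le 1, \|x\|_1 \le k\}$, the single functional constraint $\|x\|_1 - k \le 0$ is convex, and Slater's condition is trivially satisfied (take $x = 0$, which gives $\|0\|_1 = 0 < k$ strictly, since $k > 0$). Therefore strong duality holds and the dual optimum is attained at some $\lambda^* \ge 0$. I expect this to be the main obstacle only in the sense of citing the right theorem cleanly — the paper already uses ``standard min-max duality \cite{Rock1970}'' in the proof of Proposition~\ref{prop:lin_weighted_l1_pen_over_l2ball}, and the same reference covers Lagrangian strong duality for convex programs with Slater's condition; alternatively one can note that $q$ is a closed convex function of $\lambda$ on $[0,\infty)$ (being a pointwise max of affine functions of $\lambda$, plus the affine term $\lambda k$), it is coercive since $\|S_{\lambda e}(a)\|_2 \ge 0$ and $\lambda k \to \infty$, so the minimum is attained, and the min-max exchange
\[
\max_{\|x\|_2 \le 1,\,\|x\|_1\le k}\langle a,x\rangle
= \max_{\|x\|_2 \le 1}\ \min_{\lambda \ge 0}\ \bigl(\lambda k + \langle a,x\rangle - \lambda\|x\|_1\bigr)
= \min_{\lambda \ge 0}\ \max_{\|x\|_2 \le 1}\bigl(\lambda k + \langle a,x\rangle - \lambda\|x\|_1\bigr)
\]
is justified by Sion's minimax theorem (the set $\{x:\|x\|_2\le1\}$ is convex compact, $[0,\infty)$ is convex, and the bracketed expression is affine in each variable separately).

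Finally, to recover the primal optimal solution, I would use the standard complementary-slackness / saddle-point characterization: if $\lambda^*$ solves the dual, then any primal optimal $x^*$ must be a maximizer of $L(\cdot,\lambda^*)$ over $\|x\|_2 \le 1$, and conversely the maximizer $x^*(\lambda^*) = S_{\lambda^* e}(a)/\|S_{\lambda^* e}(a)\|_2$ supplied by Proposition~\ref{prop:lin_weighted_l1_pen_over_l2ball} is primal feasible and attains the common optimal value. Feasibility with respect to the $l_2$ ball is automatic since $\|x^*(\lambda^*)\|_2 = 1$; feasibility with respect to the $l_1$ ball follows because at the dual optimum the inner max value $\lambda^* k + \|S_{\lambda^* e}(a)\|_2$ equals the primal value $\langle a, x^*(\lambda^*)\rangle$, which forces $\lambda^*(\|x^*(\lambda^*)\|_1 - k) = 0$, i.e.\ either $\lambda^* = 0$ (in which case the unconstrained $l_2$-maximizer $a/\|a\|_2$ already satisfies $\|a/\|a\|_2\|_1 \le k$, consistent with $S_{0\cdot e}(a) = a$) or $\|x^*(\lambda^*)\|_1 = k$. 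This yields the claimed formula~(\ref{xl1pca}) and completes the proof; the only mild care needed is the degenerate case $S_{\lambda^* e}(a) = 0$, which happens precisely when $\lambda^* \ge \|a\|_\infty$, but then $k$ must be small and one checks the value $\lambda^* k$ matches the primal value directly, or simply notes that by coercivity and the fact that $q(0) = \|a\|_2 < \infty$ the minimizer $\lambda^*$ satisfies $\lambda^* < \|a\|_\infty$ whenever $a \ne 0$ and $k > 0$, so $S_{\lambda^* e}(a) \ne 0$.
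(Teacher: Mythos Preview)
Your proposal is correct and follows essentially the same route as the paper: dualize only the $l_1$ constraint, then invoke Proposition~\ref{prop:lin_weighted_l1_pen_over_l2ball} to evaluate the inner maximization as $\|S_{\lambda e}(a)\|_2$. The paper's proof is terser---it simply cites ``standard Lagrangian duality \cite{Rock1970}'' for the no-gap claim and omits the primal-recovery and degenerate-case discussion you spell out---so your version is a more careful fleshing-out of the same argument rather than a different approach.
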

\begin{proof}
Dualizing only the $l_1$ constraint, standard Lagrangian duality \cite{Rock1970} implies:
$$\max \{\langle a, x \rangle  : \|x\|_2 \leq 1, \|x\|_1\leq k, x\in \reals^n\}=\min \{\lambda k + \psi(\lambda):
\lambda \geq 0 \},$$ with
$$
\psi(\lambda):= \max_{\|x\|_2 \leq 1}\{\langle a, x \rangle -\lambda \|x\|_1\}= \|S_{\lambda
e}(a)\|_2
$$
where the last equality follows from Proposition \ref{prop:lin_weighted_l1_pen_over_l2ball} with $x^*(\lambda)$ as given in (\ref{xl1pca}).\end{proof}

The above propositions will be used to create simple algorithms for $l_0$ and $l_1$-constrained and penalized PCA.

\section{Sparse PCA via Conditional Gradient Algorithms} \label{s:sparse_pca}
This section details algorithms for solving the original $l_0$-constrained PCA problem
(\ref{eq:sparse_pca}) and its three modifications (\ref{eq:sparse_pca_l1constrained}), (\ref{eq:sparse_pca_l0penalized}), and (\ref{eq:sparse_pca_l1penalized}). Everything is developed using the simple tool box from Section \ref{s:props}. We derive novel algorithms as well as other known schemes that are shown to be particular realizations of the conditional gradient algorithm. A common and interesting appeal of all these algorithms is that they take
the shape of a generalized power method, i.e., they can be written as
\[
x^{j+1}=\frac{\mathcal{S}(Ax^j)}{\|\mathcal{S}(Ax^j)\|_2}
\]
where $\mathcal{S}:\reals^n\rightarrow\reals^n$ is a simple operator given in closed form or
that can be computed very efficiently.

Table \ref{table:summary} summarizes the different algorithms that are discussed throughout
this section.  Except for the alternating minimization algorithm for $l_1$-penalized PCA of
\cite{Zou06}, they are all particular realizations of the ConGradU algorithm with
an appropriate choice of the objective/constraints $(F,C)$.

\begin{table}[h!]
\begin{center}
\small{
\begin{tabular}{|l|l|l|l|l|}
\hline
  \textbf{Type} & \textbf{Iteration} & \textbf{Per-Iteration}&\textbf{References}\\
& &\textbf{Complexity} & \\ \hline &&& \\
$l_0$-constrained& $x^j=\frac{T_k((A+\frac{\sigma}{2}I_n)x^j)}{\|T_k((A+\frac{\sigma}{2}I_n)x^j)\|_2}$& $O(kn),O(mn)$ & novel\\ &&&\\\hline &&&\\
$l_1$-constrained&$x_i^{j+1}=\frac{\mbox{sgn}{(((A+\frac{\sigma}{2})x^j)_i)}(| ((A+\frac{\sigma}{2})x^j)_i|-\lambda^j)_+}{\sqrt{\sum_h{(| ((A+\frac{\sigma}{2})x^j)_h|-\lambda^j)_+^2}}}$ & $O(n^2),O(mn)$ & \cite{Witt2009} \\ &&&\\\hline &&&\\
$l_1$-constrained&$x_i^{j+1}=\frac{\mbox{sgn}{((Ax^j)_i)}(| (Ax^j)_i|-s^j)_+}{\sqrt{\sum_h{(| (Ax^j)_h|-s^j)_+^2}}}$\quad where &$O(n^2),O(mn)$ &\cite{Sigg2008} \\ &&& \\
&$s^j$ is $(k+1)$-largest entry of vector $|Ax^j|$& & \\ \hline &&&\\
$l_0$-penalized&$z^{j+1}=\frac{\sum_i{[\mbox{sgn}((b_i^Tz^j)^2-s)]_+(b_i^Tz^j)b_i}}{\|\sum_i{[\mbox{sgn}((b_i^Tz^j)^2-s)]_+(b_i^Tz^j)b_i}\|_2}$ & $O(mn)$&\cite{Shen2008,Jour2010} \\&&& \\\hline&&&\\
$l_0$-penalized&$x^{j+1}_i=\frac{\mbox{sgn}{(2(Ax^j)_i)}(|2(Ax^j)_i|-s\varphi_p'(|x^j_i|))_+}{\sqrt{\sum_h{(|2(Ax^j)_h|-s\varphi_p'(|x^j_h|))^2_+}}}$ &$O(n^2)$ &\cite{Srip2010} \\&&& \\\hline &&&\\
$l_1$-penalized&$y^{j+1}=\argmin_y{\{\sum_i{\|b_i-x^jy^Tb_i\|_2^2}+\lambda\|y\|_2^2+s\|y\|_1\}}$ &See Section \ref{ss:l1penalized_pca}&\cite{Zou06} \\&&&\\
&$x^{j+1}=\frac{(\sum_i{b_ib_i^T})y^{j+1}}{\|(\sum_i{b_ib_i^T})y^{j+1}\|_2}$& & \\&&&\\\hline&&&\\
$l_1$-penalized&$x_i^{j+1}=\frac{\mbox{sgn}{(((A+\frac{\sigma}{2})x^j)_i)}(| ((A+\frac{\sigma}{2})x^j)_i|-s)_+}{\sqrt{\sum_h{(| ((A+\frac{\sigma}{2})x^j)_h|-s)_+^2}}}$ &$O(n^2),O(mn)$ &novel \\ &&& \\\hline&&&\\
$l_1$-penalized&$z^{j+1}=\frac{\sum_i{(|b_i^Tz^j|-s)_+\mbox{sgn}(b_i^Tz^j)b_i}}{\|\sum_i{(|b_i^Tz^j|-s)_+\mbox{sgn}(b_i^Tz^j)b_i}\|_2}$ &$O(mn)$ &\cite{Shen2008,Jour2010} \\&&& \\\hline
\end{tabular} }

\end{center}
\caption{Sparse PCA Algorithms.  For each iteration,
$B\in\reals^{m\times n}$ is a data matrix with $A=B^TB$.
$b_i$ is the $i^{th}$ column of $B$
(except for the $l_1$-penalized PCA of \protect\cite{Zou06} where it is the $i^{th}$ row of $B$).  For iterations with two complexities, the first uses the covariance matrix $A$ and the second uses the decomposition $A=B^TB$ to compute matrix-vector products as $Ax$ or $B^T(Bx)$. Several iterations have two complexities, depending on whether data matrix $B$ is available.
The regularized $l_1$-constrained version of \protect\cite{Witt2009} is also novel.
The $l_0$ and $l_1$-penalized iterations of \protect\cite{Shen2008} require an $O(mn)$
transformation to recover a sparse $x^*$ from $z^*$. }
\label{table:summary}\end{table}

We stress that the first algorithm for $l_0$-constrained PCA
is the only algorithm that applies directly to the original and unmodified $l_0$-constrained PCA problem. The other algorithms are applied to modified problems
where tuning a parameter is needed to get an approximation to the desired $k$-sparse problem.  Unless otherwise specified, $A$ is only assumed to be symmetric and $A_\sigma$ is used to denote the regularized positive definite matrix.  The exceptions are only when we assume we are given a data matrix $B$ so that $A=B^TB$.

\subsection{$l_0$-Constrained PCA} \label{ss:l0constrained_pca}
In this section, we focus on the original $l_0$-constrained PCA problem
\begin{equation} \label{eq:sparse_pca2}
\max{\{x^TAx : \|x\|_2=1, \|x\|_0\leq k, x\in\reals^n\}}.
\end{equation}
We apply the ConGradU algorithm with the constraint set $C=\{x\in\reals^n:\|x\|_2=1, \|x\|_0\leq k\}$ and the convex objective (cf. Section \ref{s:formulations}):
$$
q_\sigma(x)= x^T(A + \sigma I)x=x^TA_\sigma x, \; \sigma \geq 0.
$$
When  $A \in S^n$ is already given positive semidefinite, the objective is already convex and there is no need for regularization and thus we simply set $\sigma =0$. The resulting main iteration of ConGradU reduces to,
\begin{equation} \label{eq:congradu_l0cons}
x^{j+1}=\argmax{\{\langle x ,A_\sigma x^j \rangle : \|x\|_2=1,
\|x\|_0\leq k, x\in\reals^n\}}=\frac{T_k(A_\sigma x^j)}{\|T_k(A_\sigma x^j)\|_2}, \; j=0,1, \ldots
\end{equation}
with $x^0\in C$ and where the second equality is due to Proposition \ref{prop:lin_over_l2ball_card}.

This  novel iteration  is obtained by  maximizing a continuously differentiable convex function over a compact nonconvex set, and by Theorem \ref{thm:fw_stationarity_convex_obj}, every one of its limit points converges to a stationary point. The complexity of each iteration requires computing $A_\sigma x^j$ where
$A_\sigma\in\symm^{n\times n}$ and $x^j$ is $k$-sparse so the matrix-vector product requires $O(nk)$
complexity. Computing the $T_k(\cdot)$ operator is $O(n)$ so each iteration is $O(nk)$. For very large problems where only a data matrix $B\in\reals^{m\times n}$ can be stored, the matrix-vector product $A_\sigma x^j$ is computed as $B^T(Bx^j)$, which requires complexity $O(mn)$, so each iteration is actually $O(mn)$.  The new scheme based on iteration (\ref{eq:congradu_l0cons}) is an extremely simple way to approach the original $l_0$-constrained PCA problem (\ref{eq:sparse_pca2}) for any given matrix $A_\sigma\in S^n$
and is the cheapest known non-greedy approach to $l_0$-constrained PCA.


Thus, a very simple gradient algorithm (and not greedy heuristics) can be directly
applied to the desired problem. To put our novel scheme in perspective, let us recall the
work \cite{Mogh06b} which offers the following greedy scheme. Given a set of $k$ indices
for variables with nonzero values, $n-k$ subsets of indices are created by appending the $k$
indices with one from the $n-k$ remaining indices.  Then $n-k$ possible matrices are computed
from the $n-k$ groups of indices and the matrix with the maximum  eigenvalue gives the index
that provides the $k+1$-sparse PCA solution.  A full path of solutions can be computed for all
values of $k$,  but at a costly expense $O(n^4)$ (or up to $k$-sparsity in $O(kn^3)$), and with no statements on stationarity that we have.  The work \cite{Mogh06b} also produces a  branch-and-bound method for computing the exact
solution,   however, this method is amenable to only very small problems. The authors of \cite{dasp2008} considered what they call an \emph{approximate} greedy algorithm that generates an entire path of solutions up to $k$-sparsity in $O(kn^2)$ ($k\in\{1,\ldots,n\}$).  Rather than computing the $n-k$ maximum eigenvalues each iteration, their scheme computes $n-k$ dot products each iteration.  While the path is cheap for all solutions, it is expensive when only the $k$-sparse solution is desired for a single value $k$.  The total path (up to $k$-sparse solutions) using our computations can be computed in $O(kmn)$ (assuming finite convergence).  While the (approximate) greedy algorithms are more computationally expensive, as will be shown in Section \ref{s:experiments}, they do offer good practical performance (under measures to be discussed later).

Finally, we again stress the importance of considering such simple approaches where we can only provide convergence to stationary points. Recent convex relaxations for this problem \cite{dasp07,Luss2011}, while offering new insights to the
problem, have the same disadvantage that the gap to the optimal solution of problem
(\ref{eq:sparse_pca2}) cannot be computed (together, these methods can give primal-dual gaps).
Only a gap to the optimal solution of a relaxation (convex upper bound) is computed. Another
major disadvantage is that convex relaxations are not amenable to very large data sets as the
per-iteration complexity is $O(n^3)$ and they require far more iterations in practice.
Application of our scheme is limited only by storage of the data; only $nk$ entries of the covariance matrix are needed at each iteration.


\subsection{$l_1$-Constrained PCA} \label{ss:l1constrained_pca}
In this section, we focus on the $l_1$-constrained PCA problem
\begin{equation}
\label{eq:sparse_pca_l1constrained2} \max{\{x^TAx : \|x\|_2 \leq 1,\|x\|_1\le \sqrt{k}, x\in\reals^n\}}.
\end{equation}
In \cite{Luss2011}, we provide a convex relaxation and corresponding algorithm with a per-iteration complexity of $O(n^3)$, but here,
we are again only interested in much less computationally expensive methods. We present two recent algorithms that have been proposed through different motivations and show that they can be recovered through our framework.\\

\noindent{\bf An Alternating Maximization Scheme}\\
Recently, Witten et al. \cite{Witt2009} have considered the sparse Singular Value Decomposition (SVD) problem
\[
\max_{x,y}{\{x^TBy : \|x\|_2=1,\|y\|_2=1,\|x\|_1\leq k_1,\|y\|_1\leq k_2, x,y\in\reals^n\}},
\]
where $B\in\reals^{m\times n}$ is the data matrix and $k_1,k_2$ are positive integers. They
recognized that the objective is linear in $x$ with fixed $y$ (and vice-versa) and that the
problems with $x$ or $y$ fixed can be easily solved (see Proposition \ref{prop:lin_over_l2ball_l1ball}). This fact motivates them to propose a simple cheap
alternating maximization scheme which in turn can be used to solve $l_1$-constrained PCA.
However, the work \cite{Witt2009} does not recognize this as yet another instance of the conditional gradient algorithm nor does it provide any convergence results.  We show below how to simply recover this algorithm under our framework by applying ConGradU directly to problem (\ref{eq:sparse_pca_l1constrained2}) for which the convergence claims of Theorem \ref{thm:fw_stationarity_convex_obj} hold.

Thanks to the results established in Section \ref{s:formulations}, we derive it here with the additional regularization term, i.e., with $q_\sigma(x)=x^TA_\sigma x, \; \sigma \geq 0$  for an arbitrary matrix $A\in S^n$ (with $\sigma=0$ when $A$ is already given positive semidefinite). Applying ConGradU, the scheme reads:
\begin{equation}
x^{j+1}=\argmax{\{\langle A_\sigma x, x^j\rangle : \|x\|_2\leq 1, \|x\|_1\leq\sqrt{k}, x\in\reals^n\}}.
\end{equation}
The above iteration maximizes a continuously differentiable convex function over a compact set so Theorem \ref{thm:fw_stationarity_convex_obj} can be applied to show that every limit point of the sequence converges to a stationary point. Furthermore, thanks to  Proposition \ref{prop:lin_over_l2ball_l1ball}, this scheme reduces to the iteration
\begin{equation} \label{eq:l1_constrained_pca_iter}
x^{j+1}=\frac{S_{\lambda^je}(A_\sigma x^j)}{\|S_{\lambda^je}(A_\sigma x^j)\|_2}
\end{equation}
where $\lambda^j$ is determined by solving (\ref{eq:l1_l2_constrained_one_dim}) with $a:=A_\sigma x^j$ (cf. Proposition \ref{prop:lin_over_l2ball_l1ball}).  The most expensive operation at each iteration is computing $A_\sigma x^j$ where $A_\sigma\in\symm_+^n$ and $x^j\in\reals^n$ so the per-iteration complexity is $O(n^2)$.\\

\noindent{\bf The Expectation-Maximization Algorithm}\\
Another recent approach to problem (\ref{eq:sparse_pca_l1constrained2}) is developed in
\cite{Sigg2008} which they motivate as an Expectation-Maximization (EM) algorithm for sparse PCA.
Motivation comes from their derivation of computing principal components using EM for a probabilistic
 version of PCA, which is actually equivalent to the power method.
 The authors solve $l_1$-constrained PCA, however also want
 to enforce that $\|x\|_0=k$ at each iteration as well. Their algorithm can be written as
\[
x^{j+1}=\frac{S_{\lambda^je}(A_\sigma x^j)}{\|S_{\lambda^je}(A_\sigma x^j)\|_2}
\]
where $\lambda^j$ is the $(k+1)$-largest entry of vector $|A_\sigma x^j|$.
 Note that the iteration form is identical to that above for the alternating maximization scheme,
 except for the computation of $\lambda^j$.
Thus, each iteration can be interpreted as solving
\begin{equation}
x^{j+1}=\argmax{\{\langle A_\sigma x, x^j\rangle : \|x\|_2=1, \|x\|_1\leq \sqrt{k^j}, x\in\reals^n\}},
\end{equation}
where $k^j$ is chosen at each iteration specifically so that $x^{j+1}$ is $k$-sparse, and can easily be seen to be a variant of ConGradU.  Enforcing $\lambda^j$ to be the $(k+1)$-largest entry of vector $|A_\sigma x^j|$ implicitly sets $k^j$ to a value that achieves $k$-sparsity in $x^{j+1}$. While this choice of thresholding enforces exactly $k$ nonzero entries, the iteration becomes heuristic and neither applies to the true $l_0$ or $l_1$-constrained problem.  It is cheap, with the major computation being to compute $A_\sigma x^j$, and performs well in practice as shown in Section \ref{s:experiments}. However, unlike our other iterations, there are no convergence results for this heuristic.

\subsection{$l_0$-Penalized PCA} \label{ss:l0penalized_pca}
We next consider the $l_0$-penalized PCA problem
\begin{equation} \label{eq:sparse_pca_l0penalized2}
\max{\{x^TAx - s\|x\|_0 : \|x\|_2\leq1, x\in\reals^n\}}
\end{equation}
which has received most of the recent attention in the literature. We  describe two recent algorithms to this problem and show again that they are direct applications of ConGradU.\\

\noindent{\bf Exploiting Positive Semidefiniteness of $A$}\\
The first approach due to \cite{Jour2010} assumes that $A$ is positive semidefinite (i.e., $A=B^TB$ with $B\in\reals^{m\times n}$) and writes problem
(\ref{eq:sparse_pca_l0penalized2}) as
\begin{equation} \label{eq:sparse_pca_l0penalized_nesterov}
\max{\{\|Bx\|_2^2 - s\|x\|_0 : \|x\|_2 \leq 1, x\in\reals^n\}}.
\end{equation}

The objective is neither concave nor convex. First, using the simple fact (consequence of Lemma \ref{lemma:lin_over l2_ball}) $$\|Bx\|_2^2=\max_{\|z\|_2
\leq 1}\{ \langle z, Bx \rangle^2 \},$$ the problem is equivalent to
$$
\max_{\|x\|_2 \leq 1}\max_{\|z\|_2\leq 1}\{ \langle z, Bx \rangle^2\  - s\|x\|_0
\}=\max_{\|z\|_2 \leq 1}\max_{\|x\|_2\leq 1}\{ \langle B^Tz, x \rangle^2\  - s\|x\|_0\}.
$$

Thus, we can now apply Proposition \ref{prop:squared_l0_pen_over_l2ball} to the inner
minimization in $x$ and then get
\begin{equation} \label{eq:sparse_pca_l0penalized_nesterov2}
\max_{x\in\reals^n}{\{\|Bx\|_2^2 - s\|x\|_0 : \|x\|_2\leq1\}}=\max_{z\in\reals^m}{\{\displaystyle\sum_{i=1}^n[\langle b_i,z\rangle^2-s]_+ :
\|z\|_2\leq1\}}
\end{equation}
where $b_i\in\reals^m$ is the $i^{th}$ column of $B$.  This reformulation was previously derived in \cite{dasp2008}, where the authors also provided a convex relaxation. Note that the reformulation operates in
the space $\reals^m$ rather than $\reals^n$.  Since the objective function
$f(z):=\sum_i[\langle b_i,z\rangle^2-s]_+$ is now clearly convex, we can apply ConGradU.
Noting that a subgradient of $f(z)$ is given by
\[
2\displaystyle\sum_{i=1}^n{[\mbox{sgn}(\langle b_i,z\rangle^2-s)]_+(\langle b_i,z\rangle)b_i},
\]
the resulting iteration (using Lemma \ref{lemma:lin_over l2_ball}) yields:
\begin{equation} \label{eq:l0_journee_iter}
z^{j+1}=\frac{\sum_i{[\mbox{sgn}((\langle b_i,z^j\rangle)^2-s)]_+(\langle b_i,z^j\rangle)b_i}}
{\|\sum_i{[\mbox{sgn}((\langle b_i,z^j\rangle)^2-s)]_+(\langle b_i,z^j\rangle)b_i}\|_2},
\end{equation}
and the convergence results for the nonsmooth case of Theorem
\ref{thm:fw_stationarity_convex_obj} apply.

This is exactly the algorithm recently derived in \cite{Jour2010}.  Note that an $O(mn)$
transformation is then needed via Proposition \ref{prop:squared_l0_pen_over_l2ball} to recover
the solution $x$ of the original problem (\ref{eq:sparse_pca_l0penalized_nesterov}).  This is
the first cheap ($O(mn)$ per-iteration complexity) and nongreedy approach for directly solving
the $l_0$-penalized problem. As with \cite{Witt2009} for $l_1$-constrained PCA,
\cite{Shen2008} approach $l_0$-penalized PCA via $l_0$-penalized SVD. After
modifications to write it out for $l_0$-penalized PCA, the resulting iteration of their
paper is equivalent to iteration (\ref{eq:l0_journee_iter}). However, they did not offer a
derivation or state the convergence properties given in \cite{Jour2010}.\\

\noindent{\bf Approximating the $l_0$-penalized Problem}\\
As explained in Section \ref{s:formulations}, we consider the $l_0$-penalized problem whereby we use an approximation to the $l_0$ norm, that is, we consider the problem of maximizing a convex function:
\begin{equation} \label{eq:sparse_pca_smoothed_gen}
\max_x \{ x^TA_\sigma x + g(|x|): \|x\|_2\leq 1, x\in \reals^n \}
\end{equation}
where the convex function $g$ is defined by
$$
g(z):= - s \sum_{i=1}^n \varphi_p(z_i), \; s >0,
$$
with $\varphi_p$ concave satisfying the premises given in Section \ref{s:formulations} and $A_\sigma=A+\sigma I, \sigma\ge0$.  Applying ConGradU to this problem, as shown in Section \ref{s:cond_grad_method}, using Proposition \ref{prop:nonsmooth_condtional_gradient} reduces the iteration to
the following weighted $l_1$-norm maximization:
\begin{equation}
x^{j+1}=\argmax{\{\langle A_\sigma x,x^j\rangle-\sum_i{w^j_i|x_i|} : \|x\|_2=1\}}
\end{equation}
where $w^j_i=s\varphi_p'(|x^j_i|)$.

Proposition \ref{prop:lin_weighted_l1_pen_over_l2ball} shows that this problem can be solved
in closed form so that the conditional gradient algorithm becomes
\[
x^{j+1}=\frac{S_{w^j}(A_\sigma x^j)}{\|S_{w^j}(A_\sigma x^j)\|_2}
\]
where again $w^j_i=s\varphi_p'(|x^j_i|)$. Theorem
\ref{thm:fw_stationarity_convex_obj} regarding convergence of every limit point
of the resulting sequence to a stationary point again applies.
The per-iteration complexity of this iteration is $O(n^2)$, which is reduced to $O(mn)$
if we have the factorization $A_\sigma=B^TB$.

Depending on the choice of  $\varphi$ (cf. Section \ref{s:formulations}), we thus have a
family of algorithms for solving problem (\ref{eq:sparse_pca_l0penalized2}). For example, with
the Example \ref{exphi} (b) given in Section \ref{s:formulations},
we obtain the recent algorithm of \cite{Srip2010} which was derived there by
applying what is called the minorization-maximization method, a seemingly different approach; they consider $A\in\symm^n$ and represent the objective as a difference of convex functions plus the penalization: $x^TAx-s\sum_i{\varphi_p(|x_i|)}=x^TA_\sigma x-\sigma x^Tx-s\sum_i{\varphi_p(|x_i|)}$.  The objective is minorized by linearizing the convex term $x^TA_\sigma x-s\sum_i{\varphi_p(|x_i|)}$ resulting in a concave lower bound that is maximized.  When $\sigma=0$, this is identical to using the conditional gradient algorithm ConGradU, which is only one example of a minorization-maximization method.  For more on the minorization-maximization technique and its connection to gradient methods see the recent work
\cite{Beck2010} and references therein.  We also note that \cite{Srip2010}
derived their algorithm for the sparse generalized eigenvalue (GEV) problem
\begin{equation} \label{eq:sparse_gev}
\max{\{x^TAx : x^TBx \leq 1, \|x\|_0\leq k, x\in\reals^n\}}
\end{equation}
where $A\in\symm^n$ and $B\in\symm^n_{++}$, which includes as a special case the sparse PCA problem when $B$ is the identity matrix. The resulting algorithm of \cite{Srip2010} for this problem
requires computing a matrix pseudoinverse, and is much more computationally expensive (and not
amenable to extremely large data sets) than the same algorithm for sparse PCA. Moreover, using the results of Section \ref{s:formulations}, clearly the general iteration for indefinite $A$ need not be considered and sparse GEV can always be approached with a closed-form conditional gradient algorithm which still requires computing a matrix pseudoinverse (the closed-form iteration is derived in \cite{Srip2010}).

\subsection{$l_1$-Penalized PCA} \label{ss:l1penalized_pca}
Consider the $l_1$-penalized PCA problem
\begin{equation}
\label{eq:sparse_pca_l1penalized2} \max{\{x^TAx -s\|x\|_1 : \|x\|_2=1, x\in\reals^n\}}.
\end{equation}
This problem has a nonconvex objective. The work \cite{dasp07} provides a convex relaxation that
is solved via semidefinite programming with a per-iteration complexity of $O(n^3)$, but here,
we are again only interested in much less computationally expensive methods. We describe two
methods that exploit positive semidefiniteness of $A$, along with a novel scheme that does not require $A\in\symm^+_n$.\\

\noindent{\bf Reformulation with a Convex Objective}\\
To apply ConGradU, we need either a convex objective or, as shown, an objective of the
form $f(x)+g(|x|)$ with $f,g$ satisfying certain properties (cf. Section
\ref{s:formulations}). Exploiting the fact that $A\in S^n_{+}$, with $A=B^TB, B \in
\reals^{m\times n}$, an equivalent reformulation of the original $l_0$-constrained PCA problem can use the square root objective $\|Bx \|_2$, and hence the corresponding $l_1$-penalized PCA problem reads, instead of (\ref{eq:sparse_pca_l1penalized2}), as
$$ \max{\{\|Bx\|_2 -s\|x\|_1 :
\|x\|_2\leq1, x\in\reals^n\}}.$$
One can think of this as replacing the objective in our original $l_0$-constrained PCA problem (\ref{eq:sparse_pca}) with $\sqrt{x^TAx}=\|Bx\|_2$ (which is an equivalent problem) and then making the modifications for $l_1$-penalized PCA. The objective remains problematic and is neither convex nor concave. However, using again the
fact that $\|Bx\|_2=\max\{ \langle z, Bx \rangle: \|z\|_2\leq 1,z\in\reals^m\}$, the problem
reads
$$\max\{\|Bx\|_2 -s\|x\|_1 : \|x\|_2\leq1, x\in\reals^n\}= \max_{\|z\|_2\leq 1}\max_{\|x\|_2 \leq 1}\{\langle z, Bx \rangle
-s \|x\|_1\}.$$

Thus, applying Proposition \ref{prop:lin_weighted_l1_pen_over_l2ball}, the inner maximization
with respect to $x$ can be solved explicitly and the problem can be reformulated as maximizing
a convex objective, and we obtain:
$$
\max_{x\in\reals^n}{\{\|Bx\|_2 -s\|x\|_1 : \|x\|_2\leq1\}}= \max_{z\in\reals^m}{\{\displaystyle\sum_{i=1}^n(|b_i^Tz|-s)^2_+ :
\|z\|_2\leq1\}}, $$
where $b_i$ is the $i^{th}$ column of $B$.

We can now apply ConGradU to the convex (for similar reasons as for the $l_0$-penalized case in Section \ref{ss:l0penalized_pca}) objective $f(z)=\sum_i[|b_i^Tz|-s]^2_+$, and
for which our convergence results for the nonsmooth case hold true.  A subgradient of $f$ is
given by
\[
2\displaystyle\sum_{i=1}^n{(|b_i^Tz|-s)_+\mbox{sgn}(b_i^Tz)b_i},
\]
and, using Lemma \ref{lemma:lin_over l2_ball}, the resulting iteration is
\begin{equation} \label{eq:l1_journee_iter}
z^{j+1}=\frac{\sum_i{(|b_i^Tz^j|-s)_+\mbox{sgn}(b_i^Tz^j)b_i}}{\|\sum_i{(|b_i^Tz^j|-s)_+\mbox{sgn}(b_i^Tz^j)b_i}\|_2}.
\end{equation}

This is exactly the other algorithm recently derived in  \cite{Jour2010}. Note that to recover the solution to problem (\ref{prop:lin_weighted_l1_pen_over_l2ball}) needs an $O(mn)$
transformation via Proposition \ref{prop:lin_weighted_l1_pen_over_l2ball}.  This algorithm has
an $O(mn)$ per-iteration complexity. Note also that this algorithm  was stated earlier in
\cite{Shen2008} but no such derivation or convergence results were given.\\

\noindent{\bf A Novel Direct Approach}\\
We next derive a novel algorithm for problem (\ref{eq:sparse_pca_l1penalized2}) by directly
applying the conditional gradient algorithm.  Indeed, problem (\ref{eq:sparse_pca_l1penalized2}) reads as maximizing $f(x)+g(|x|)$ with $f(x)$ convex, $g(x)$ convex, differentiable,
and monotone decreasing, with $f(x)=x^TA_\sigma x$ and $g(u)=-\sum_i{u_i}$ where $A_\sigma$ is as
previously defined.  Applying the ConGradU algorithm and Proposition
\ref{prop:nonsmooth_condtional_gradient} leads to the iteration
\begin{equation}
x^{j+1}=\argmax{\{\langle A_\sigma x^j,x\rangle-s\|x\|_1 : \|x\|_2=1\}},
\end{equation}
which by Proposition \ref{prop:lin_weighted_l1_pen_over_l2ball} reduces to
\begin{equation} \label{eq:l1_penalized_pca_iter}
x^{j+1}=\frac{S_{se}(A_\sigma x^j)}{\|S_{se}(A_\sigma x^j)\|_2},
\end{equation}
where $e$ is a vector of ones.  Theorem \ref{thm:fw_stationarity_convex_obj} applies, showing that any limit point of this iteration is a stationary point of the $l_1$-penalized PCA problem.

The matrix-vector product $Ax^j$ is the main computational cost so the per-iteration
complexity is $O(n^2)$ (or $O(mn)$ if computing $B^T(Bx^j)$). This approach can handle matrices $A$ that are not positive
semidefinite (by taking $\sigma>0$) and has stronger convergence results than the conditional gradient method applied to the reformulation of \cite{Jour2010}, i.e., this approach is equivalent to applying ConGradU with a differentiable objective function (by Proposition \ref{prop:lin_weighted_l1_pen_over_l2ball}) and thus satisfies part (c) of Theorem \ref{thm:fw_stationarity_convex_obj}.  \cite{Jour2010} apply ConGradU to a different nondifferentiable formulation for which our theory does not apply.

For the sake of completeness, we end this section by mentioning one of the earlier cheap schemes for sparse PCA, even though it does not fall into the category of directly applying ConGradU.\\

\noindent{\bf An Alternating Minimization Scheme}\\
One of the earlier cheap approaches to sparse PCA, specifically
for $l_1$-penalized PCA, is proposed in \cite{Zou06} (SPCA).  While they generalize all results to
multiple factors, we only discuss the one factor case. They pose sparse PCA as an $l_1/l_2$-regularized
regression problem, specifically
\begin{equation} \label{eq:spca_lasso}
(x^*,y^*)=\argmin_{x,y}{\{\displaystyle\sum_{i=1}^m{\|b_i-xy^Tb_i\|_2^2}+\lambda\|y\|_2^2+s\|y\|_1 : \|x\|_2^2=1, x,y\in\reals^n\}}
\end{equation}
where $\lambda$ and $s$ are the $l_2$ and $l_1$ regularization parameters, respectively, and $B\in\reals^{m\times n}$ is a data matrix with rows $b_i\in\reals^n$.  When $s=0$, they show that $y^*$ is proportional to the leading eigenvector of $B^TB$.  Indeed, when $s=0$, problem (\ref{eq:spca_lasso}) can be recast as a classical maximum eigenvalue problem in $x$:
\begin{equation} \label{eq:spca_maxeig}
x^*=\argmax_x{\{x^TB^TB(B^TB+\lambda I_n)^{-1}B^TBx : \|x\|^2_2=1, x\in\reals^n\}}
\end{equation}
by first solving for $y$ (simple algebra shows $y^*=(B^TB+\lambda I_n)^{-1}B^TBx)$ and plugging $y^*$ into (\ref{eq:spca_lasso}).  It is easy to show that the $x^*$ that solves problem (\ref{eq:spca_maxeig}) is equal to the leading eigenvector of $B^TB$ for all $\lambda\ge0$, and thus, for the purposes of finding the leading eigenvector, we do not need to regularize the matrix (i.e., set $\lambda=0$).

Problem (\ref{eq:spca_lasso}) uses an $l_1$ penalty, known as a LASSO penalty, in order to induce sparsity on $y$ resulting in an approximate sparse leading eigenvector $y^*/\|y^*\|_2$.  An alternating minimization scheme in $x$ and $y$ is proposed to solve problem (\ref{eq:spca_lasso}). For fixed $y$, we have
\[
\sum_i{\|b_i-xy^Tb_i\|_2}=\sum_i{(b_i^Tb_i-2(y^Tb_i)b_i^Tx+(y^Tb_i)^2x^Tx)}=-2y^T(\sum_i{b_ib_i^T})x+C
\]
where $C$ is a constant (using the constraint $\|x\|_2=1$), so that the minimizer $x^*$ is
solved by maximizing a linear function over the unit sphere which, by Lemma
\ref{lemma:lin_over l2_ball}, is easily solved in closed-form.  For fixed $x$, the minimizer
$y^*$ is found by solving an unconstrained minimization problem of the form
$\|\cdot\|_2^2+s\|\cdot\|_1$ (also known as the \emph{elastic net} problem). This problem can
be solved efficiently for fixed $s$ using fast first-order methods such as FISTA
\cite{Beck2009} or for a full path of values for $s$ using LARS \cite{Efr2004}. Thus,
\cite{Zou06} solve a nonconvex problem in two variables using alternating minimization.
While this scheme is computationally inexpensive compared to convex relaxations, it is not as
cheap as the schemes we are considering due to the subproblem with fixed $x$, and no
convergence results have been derived for it.

\section{Experiments} \label{s:experiments}
Thus far, various algorithms have been provided with the goal of learning sparse rank one approximations.  In this section, these different methods are compared.  The algorithms considered here are $l_0$-constrained PCA (novel iteration), an approximate greedy algorithm \cite{dasp2008}, GPowerL1 ($l_1$-penalized PCA of \cite{Jour2010}), GPowerL0 ($l_0$-penalized PCA of \cite{Jour2010}), Expectation-Maximization ($l_1$-constrained PCA of \cite{Sigg2008}), and thresholding (select $k$ entries of principal eigenvector with largest magnitudes).  We also consider an exact greedy algorithm and the optimal solution (via exhaustive search) for small dimensions ($n=10$).

The goal of these experiments is two-fold.  Firstly, we demonstrate that the various algorithms give very similar performance.  The measure of comparison used is the proportion of variance explained by a sparse vector versus that explained by the true principal eigenvector, i.e., the ratio $x^TAx/v^TAv$ where $x$ is the sparse eigenvector and $v$ is the true principal eigenvector of $A$. The second goal is to solve very large sparse PCA problems.  The largest dimension we approach is $n=50000$, however, as discussed above, the ConGradU algorithm applied to $l_0$-constrained PCA has very cheap $O(mn)$ iterations and is limited only by storage of a data matrix.  Thus, on larger computers, extremely large-scale sparse PCA problems (much larger than those solved even here) are also feasible.

Note that we do not compare against all algorithms listed in Table \ref{table:summary}.  In particular, SPCA \cite{Zou06} was already demonstrated to be computationally more expensive, as well to provide inferior performance, to GPowerL1 and GPowerL0 \cite{Jour2010}.  The $l_1$-constrained PCA method of \cite{Witt2009} and $l_0$-penalized PCA method of \cite{Srip2010} are also cheap methods that give similar performance (learned from experiments not shown in this paper) to the algorithms in our experiments. Finally, note that, for all experiments, we do a postprocessing step in which we compute the largest eigenvector of the data matrix in the $k$-dimensional subspace that is discovered by the respective methods.

All experiments were performed in MATLAB on a PC with 2.40GHz processor with 3GB RAM. Codes from the competing methods were downloaded from URL's available in the corresponding references.
 Slight modifications were made to do singular value decompositions rather than eigenvalue decompositions
  in order to deal with much smaller $m\times n$ data matrices rather than $n \times n$ covariance matrices.  We first demonstrate performance on random matrices and follow with a text data example.

\subsection{Random Data}\label{ss:experiments_random}
We here consider random data matrices $F\in\reals^{m\times n}$ with $F_{ij}\sim N(0,1/m)$.
It was already shown in literature on greedy methods \cite{dasp2008} and convex relaxations \cite{dasp07,Luss2011}
that random matrices of the form $xx^T+U$ where $U$ is uniformly distributed noise are \emph{easy} examples.
 Results here show that taking sparse eigenvectors of the matrix $F^TF$ is also relatively easy.

The experiments consider $n=10$ $(m=6)$ and $n=5000, 10000, 50000$ (each with $m=150$),
each using 100 simulations.  We consider $l_0$-constrained PCA with $k=2,\ldots,9$
for $n=10$ and $k=5,10,\ldots,250$ for the remaining tests.
The optimal solution (found by exhaustive search) and the exact
greedy algorithm (too computationally expensive for high dimensions)
are only used when $n=10$.

\begin{figure}[h!] \begin{center}
  \begin{tabular} {cc}
     \psfrag{title}[t][t]{\small{$m=6,n=10$}}
     \psfrag{perc}[b]{\small{Proportion of explained variance}}
     \psfrag{k}[t]{\small{Sparsity}}
 \includegraphics[width=0.49\textwidth]{./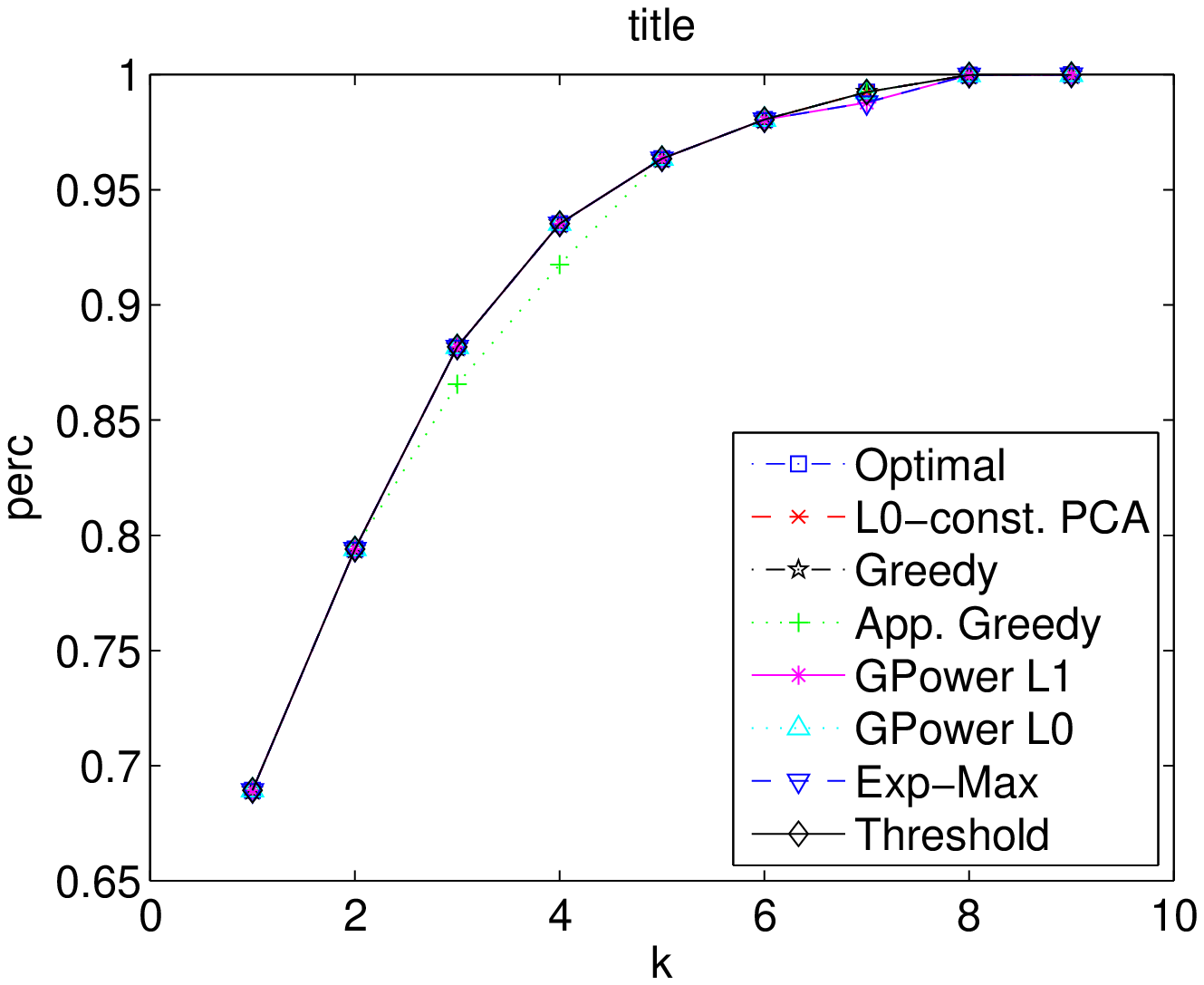}&
      \psfrag{title}[t][t]{\small{$m=150,n=5000$}}
     \psfrag{perc}[b]{\small{Proportion of explained variance}}
     \psfrag{k}[t]{\small{Sparsity}}    \includegraphics[width=0.49\textwidth]{./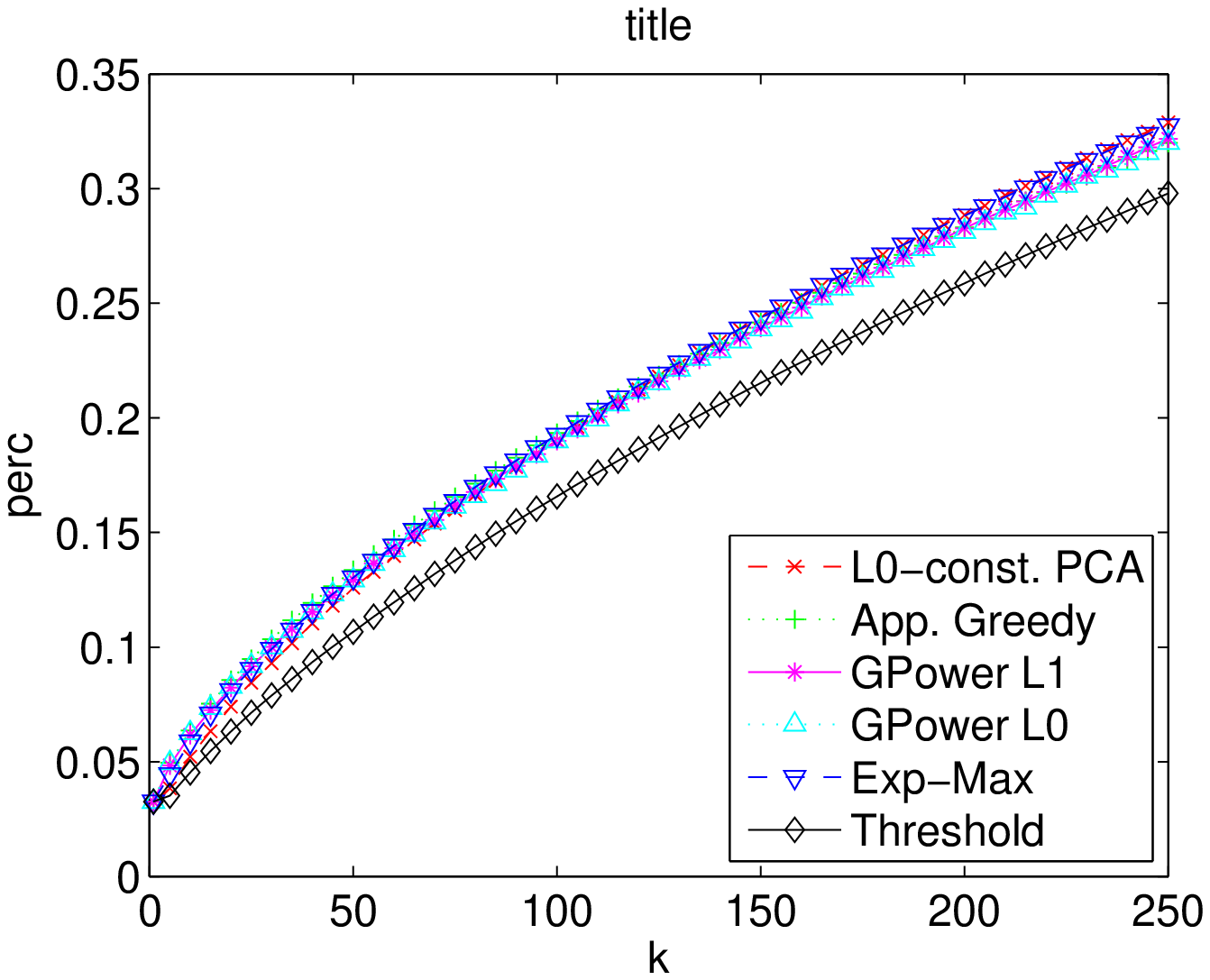}\\\\
     \psfrag{title}[t][t]{\small{$m=150,n=10000$}}
     \psfrag{perc}[b]{\small{Proportion of explained variance}}
     \psfrag{k}[t]{\small{Sparsity}}
\includegraphics[width=0.49\textwidth]{./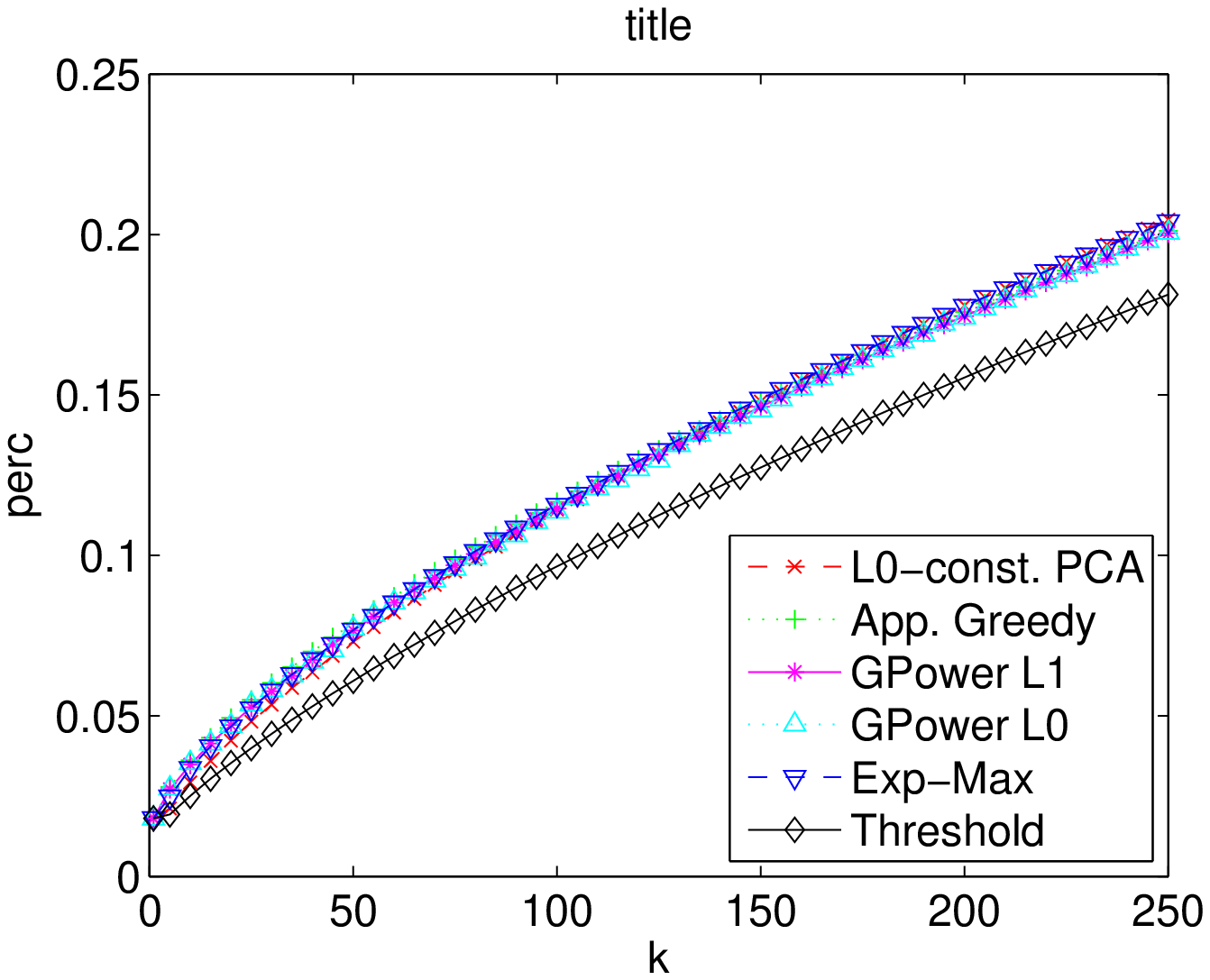}&
     \psfrag{title}[t][t]{\small{$m=150,n=50000$}}
     \psfrag{perc}[b]{\small{Proportion of explained variance}}
     \psfrag{k}[t]{\small{Sparsity}}
\includegraphics[width=0.49\textwidth]{./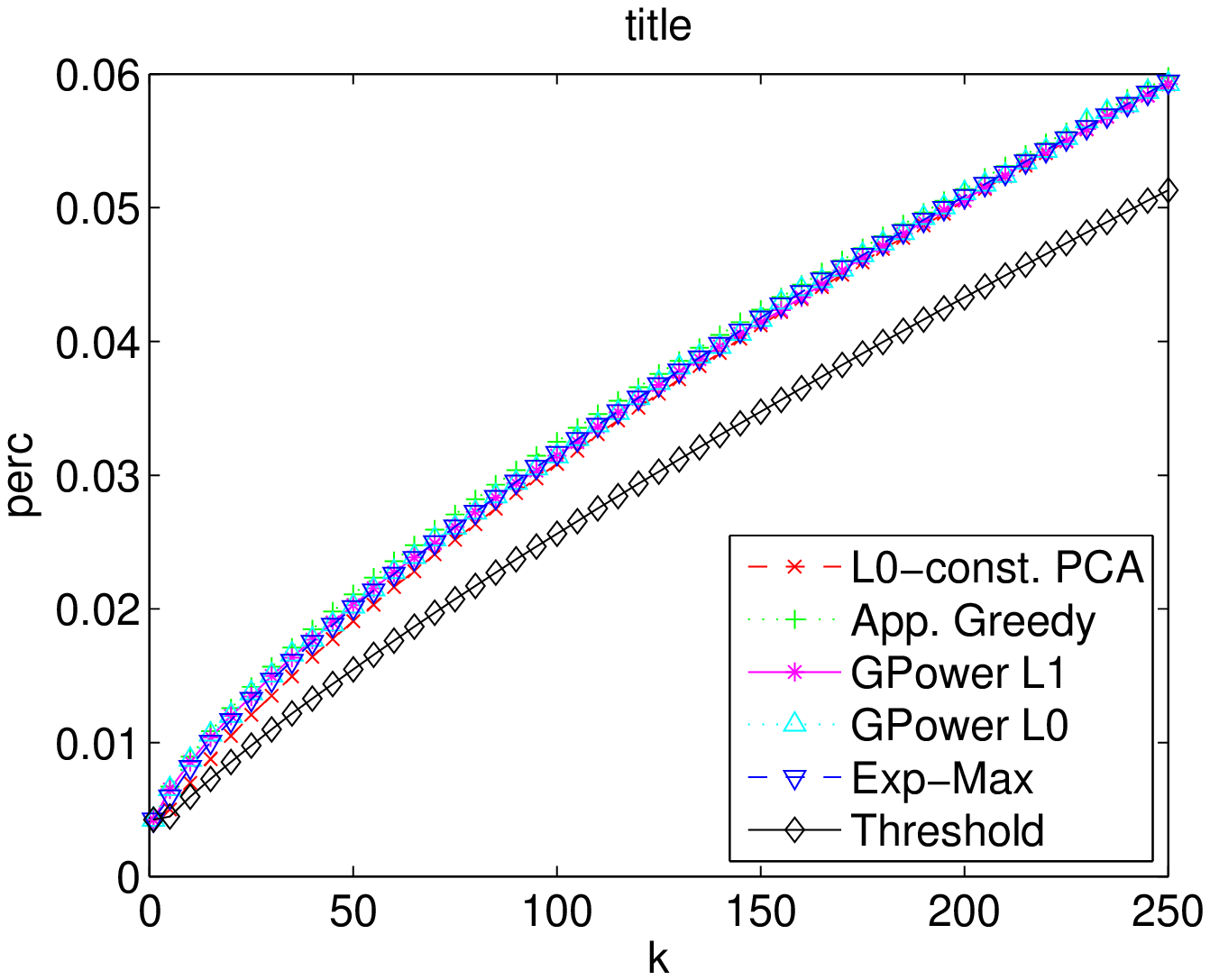}
\end{tabular}
\caption{Plots show the average percent of variance explained by the sparse eigenvectors found by several algorithms, i.e., the ratio $x^T(F^TF)x/v^T(F^TF)v$ where $x$ is the sparse eigenvector and $v$ is the true first eigenvector.  $F$ is an $m\times n$ matrix with $F_{ij}\sim N(0,1/m)$.  Sparsities of $2,\ldots,9$ are computed for $n=10$ and $5,10,\ldots,250$ for the remaining experiments.  100 simulations are used to produce all results.}
\label{fig:percGauss}\end{center}\end{figure}

Figure \ref{fig:percGauss} compares performance of the various algorithms.
Similar patterns are seen as $n$ increases.  For $n=10$, optimal performance
is obtained for almost every algorithm.  For higher dimensions, we have
 no measure of the gap to optimality.  As the dimension increases, the
 proportion of explained variation by using the same fixed cardinality
 decreases as expected.  The next subsection shows that we do not necessarily
 need to explain most of the variation in the true eigenvector in order to gain
 interpretable factors.

Results only up to a cardinality level of 250 variables are displayed because our goal is
simply to compare the different algorithms.  All algorithms, except for simple thresholding,
perform very similarly.  These figures do not sufficiently display the story, so we describe
the similar pattern that occurs.  The approximate greedy algorithm does best for smallest
cardinalities, then the expectation-maximization scheme dominates, and at some point the novel
$l_0$-constrained PCA scheme gives best performance at a higher level of explained variation.
For $n=50000$, we do not actually see this change yet because such little variation is
explained with only 250 variables.  Furthermore, it is important to notice that the thresholded solution is consistently and greatly outperformed by all other methods, suggesting that the performance results are enhanced via the conditional gradient algorithm. These experiments simply show that these algorithms offer
very similar performance, and hence we next compare them computationally.

\begin{figure}[h!] \begin{center}
  \begin{tabular} {cc}
     \psfrag{title}[t][t]{\small{$m=6,n=10$}}
     \psfrag{time}[b]{\small{Time (seconds)}}
     \psfrag{k}[t]{\small{Sparsity}}
 \includegraphics[width=0.49\textwidth]{./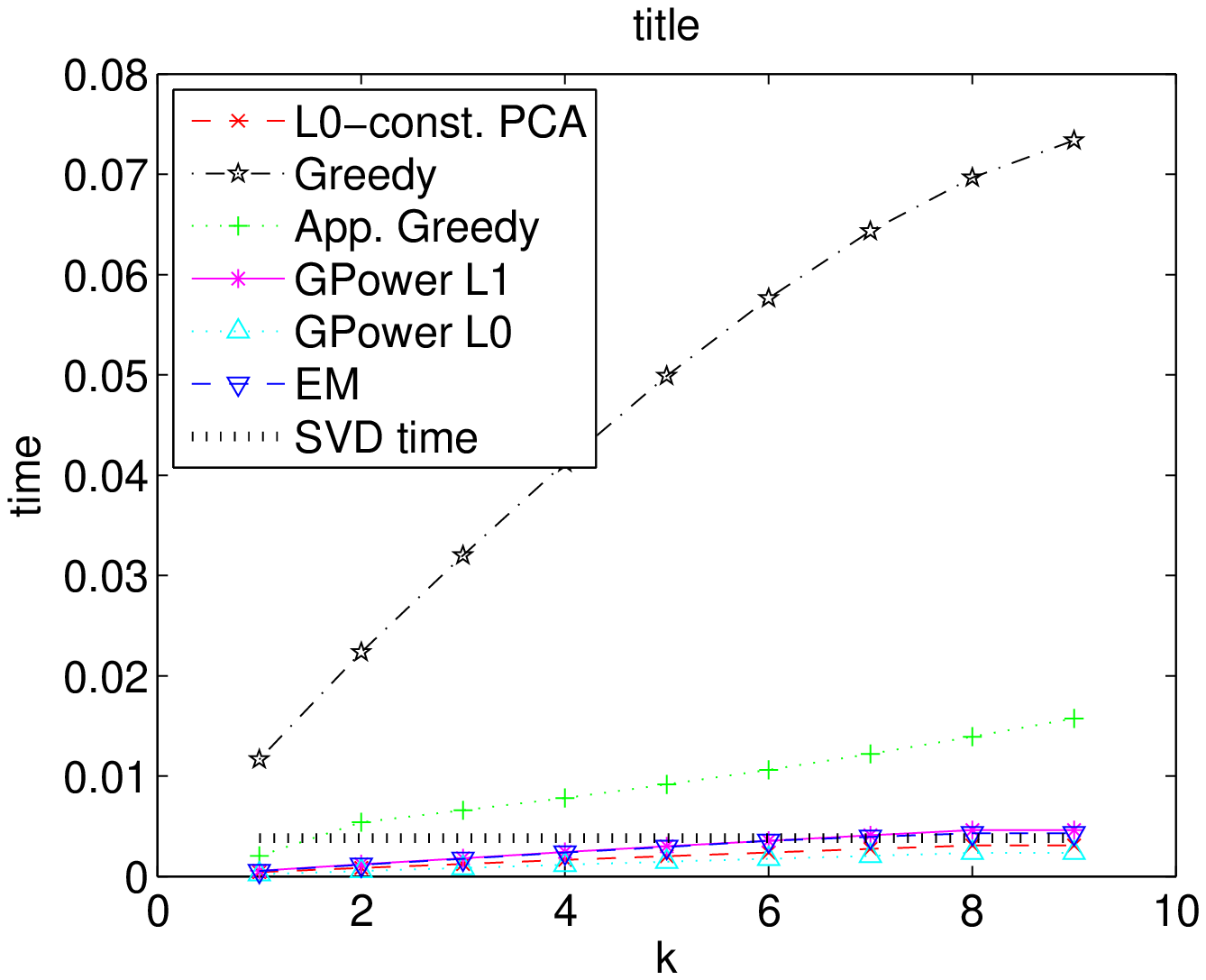}&
      \psfrag{title}[t][t]{\small{$m=150,n=5000$}}
     \psfrag{time}[b]{\small{Time (seconds)}}
     \psfrag{k}[t]{\small{Sparsity}}    \includegraphics[width=0.49\textwidth]{./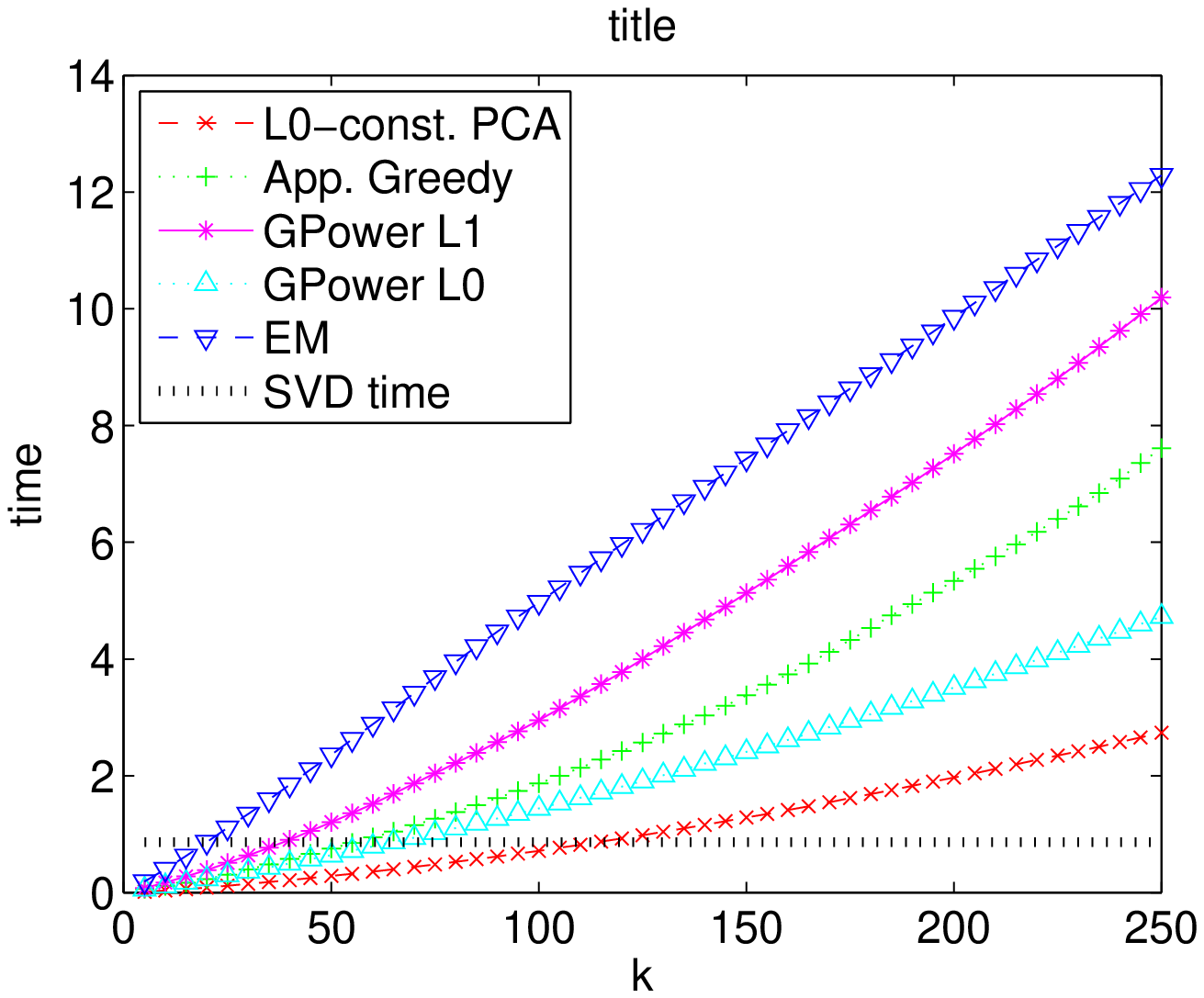}\\\\
     \psfrag{title}[t][t]{\small{$m=150,n=10000$}}
     \psfrag{time}[b]{\small{Time (seconds)}}
     \psfrag{k}[t]{\small{Sparsity}}
\includegraphics[width=0.49\textwidth]{./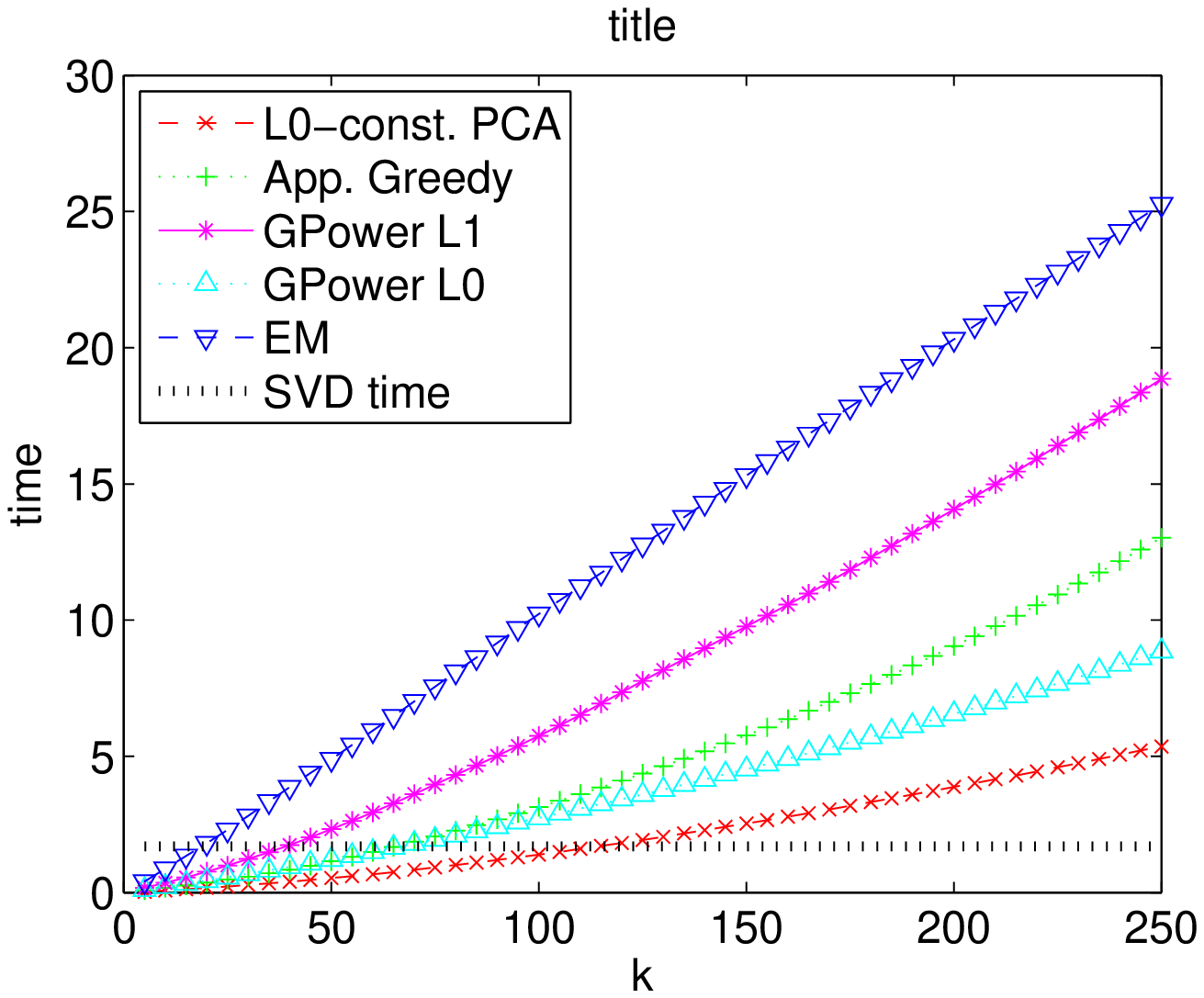}&
     \psfrag{title}[t][t]{\small{$m=150,n=50000$}}
     \psfrag{time}[b]{\small{Time (seconds)}}
     \psfrag{k}[t]{\small{Sparsity}}
\includegraphics[width=0.49\textwidth]{./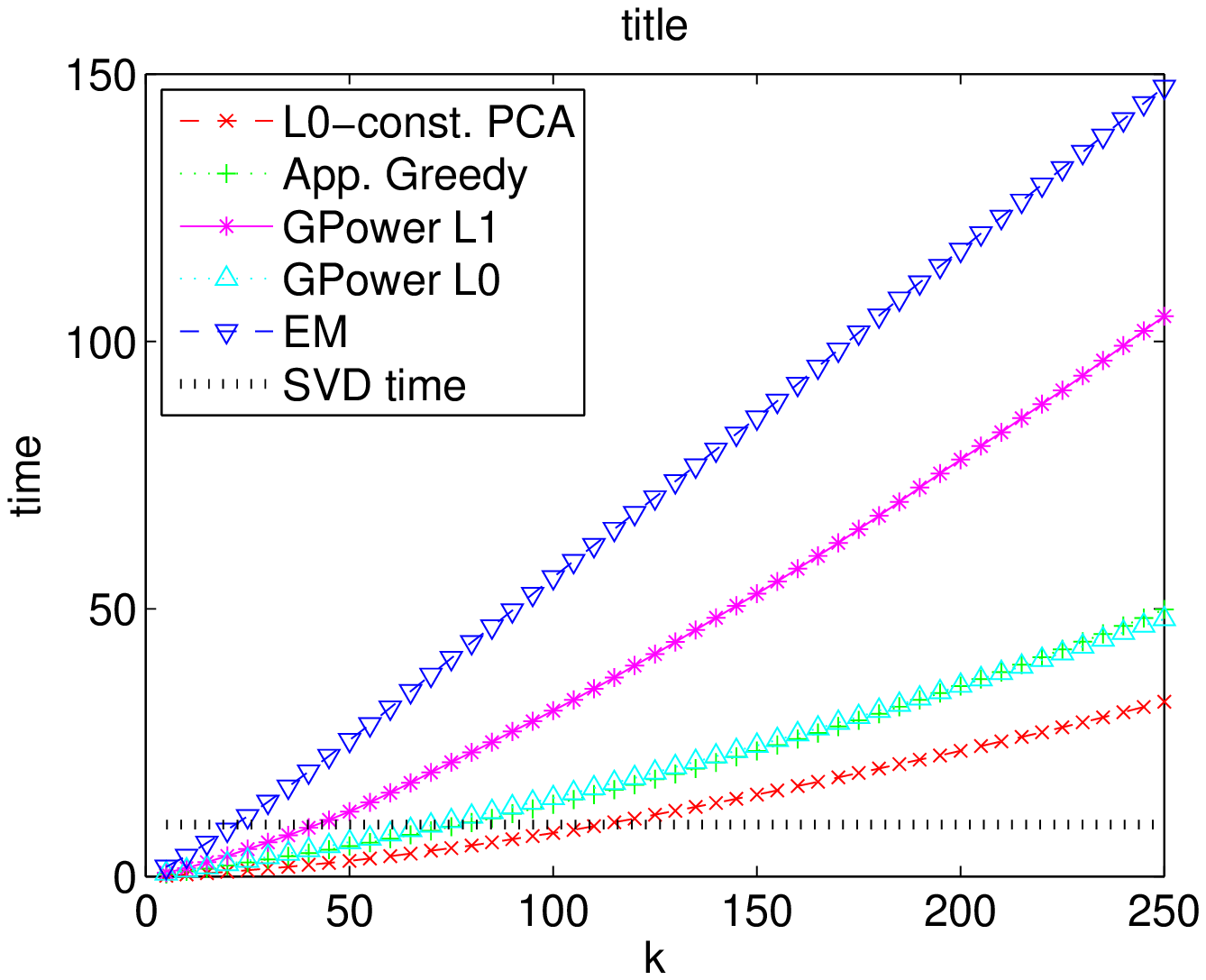}
\end{tabular}
\caption{Plots show the average cumulative time to produce the sparse eigenvectors of $F^TF$ found by several algorithms, with  $F$  an $m\times n$ matrix with $F_{ij}\sim N(0,1/m)$.  Sparsities of $2,\ldots,9$ are computed for $n=10$ and $5,10,\ldots,250$ for the remaining experiments.  Note that cumulative times are given, i.e., the time to calculate the vector with 30 nonzeros adds up the time to compute vectors with $5,10,\ldots,30$ nonzeros, in order to compare with the approximate greedy method.  The svdTime is the time required to compute the principal eigenvector of $F^TF$ which is used to compute an initial solution for $l_0$-constrained PCA.  100 simulations are used to produce all results.}
\label{fig:timeGauss}\end{center}\end{figure}

Figure \ref{fig:timeGauss} displays the computational time comparing the various algorithms for the different dimensions.   Firstly, note that for penalized PCA problems (GPowerL0 and GPowerL1), a parameter must be tuned in order to achieve the desired sparsity.  Figures here do not account for time spent tuning parameters.  Secondly, the greedy algorithms must compute the greedy solution at all sparsity levels of $1,\ldots,250$ in order to obtain the solution with 250 variables, and hence the time displayed to compute greedy solutions is the cumulative time.  Time for each algorithm is thus also taken as the cumulative time, albeit for all others it is the cumulative time to obtain a solution with sparsity levels of $5,10,\ldots,250$.  The novel $l_0$-constrained algorithm requires an initial solution which we take as thresholded solution of the true principal eigenvector.  The time to obtain that initial solution is marked as svdTime, however it need only be computed once for the entire path.

For $n=10$, the exact greedy algorithm is clearly the most expensive, requiring $n$ maximum eigenvalue computations per iteration.  For higher dimensions, the same pattern occurs.  The expectation-maximization scheme requires the most time because the scheme implicitly solves a penalized problem and thus also implicitly tunes a parameter.  GPowerL1 is surprisingly (since the tuning time is not included) next.  Despite being cheap, it requires more iterations than other methods to converge.  The approximate greedy algorithm follows, and is expected to be (relatively) computationally expensive because of the maximum eigenvalue computed at each iteration.  This is followed by GPowerL0 and finally by the cheapest scheme, the novel $l_0$-constrained PCA iteration.

We now discuss the advantages and disadvantages of the different schemes.
Clearly, if the sparsity is known (or the sparsities desired is much less than the full path),
the approximate greedy algorithm is much more computationally expensive. Comparing
the other cheap schemes, the $l_0$-constrained PCA scheme is cheapest (given the initial solution).
The disadvantage of the penalized schemes (GPowerL1 and GPowerL0) is that they must be tuned which is
 computationally very expensive (not shown).
   Warmstarting could be used, for example, by initializing for $k=10$ based on the solution
   to $k=5$ rather than from the thesholded solution.  Thus, if the desired sparsity is known,
   the $l_0$-constrained PCA scheme is clearly the algorithm to use.  If not,
   then all of the algorithms are cheap, offer similar performance,
   and can be used to derive a path of sparse solutions.

\subsection{Republicans or Democrats: What is the Difference?}\label{ss:experiments_text}
We consider here text data based on all
State of the Union addresses from 1790-2011.
Transcripts are available at \emph{http://stateoftheunion.onetwothree.net}
where other interesting analyses of this data are also done.
Here, sparse PCA is used to further analyze these historical speeches.
Questions one might ask relate to how the language in speeches has changed
from George Washington through Barak Obama or how the relevant issues divide the different presidents.
After taking the stems of words and removing commonly used \emph{stopwords}, we created a bag-of-words
data set based on all remaining words, leaving 12953 words (i.e., $n=12953$ for this example).  Our data matrix here is $B\in\reals^{m\times n}$ where $m$ is the number of speeches and $B_{ij}$ is the number of times the $j^{th}$ word occurs in the $i^{th}$ State of the Union address. We analyze two different sample sizes: using all speeches from 1790-2011 ($m=225$) and only speeches from 1982-2011 ($m=31$).  The rows of $B$ are normalized so that each speech is of the same length.  The following results are for PCA and sparse PCA performed on the covariance matrix $A=B^TB$.

\begin{figure}[h!] \begin{center}
  \begin{tabular} {cc}
     \psfrag{title}[t][t]{\small{1790-2011}}
     \psfrag{perc}[b]{\small{Proportion of explained variance}}
     \psfrag{k}[t]{\small{Sparsity}}     \includegraphics[width=0.49\textwidth]{./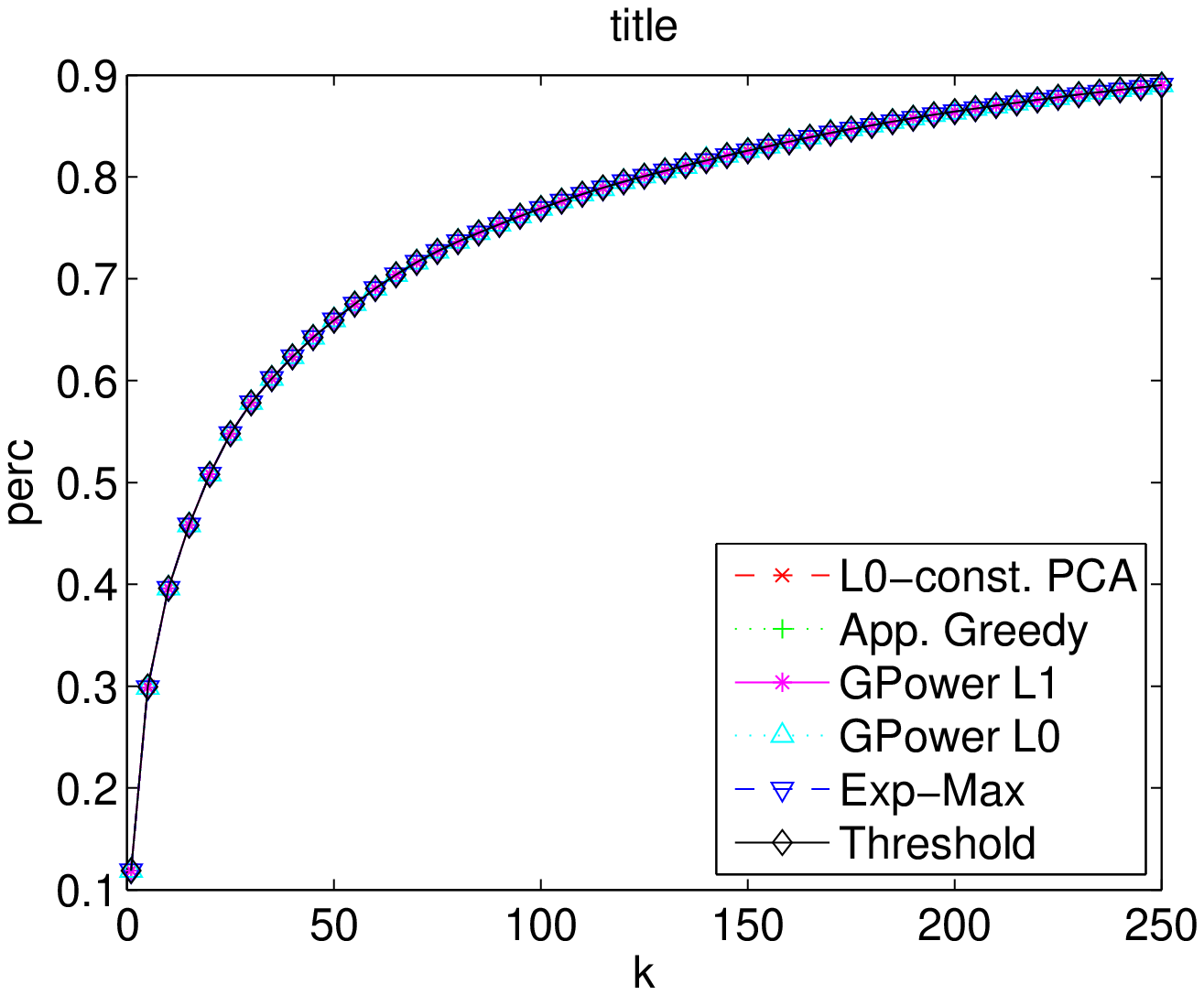}&
     \psfrag{title}[t][t]{\small{1790-2011}}
     \psfrag{time}[b]{\small{Time (seconds)}}
     \psfrag{k}[t]{\small{Sparsity}}
\includegraphics[width=0.49\textwidth]{./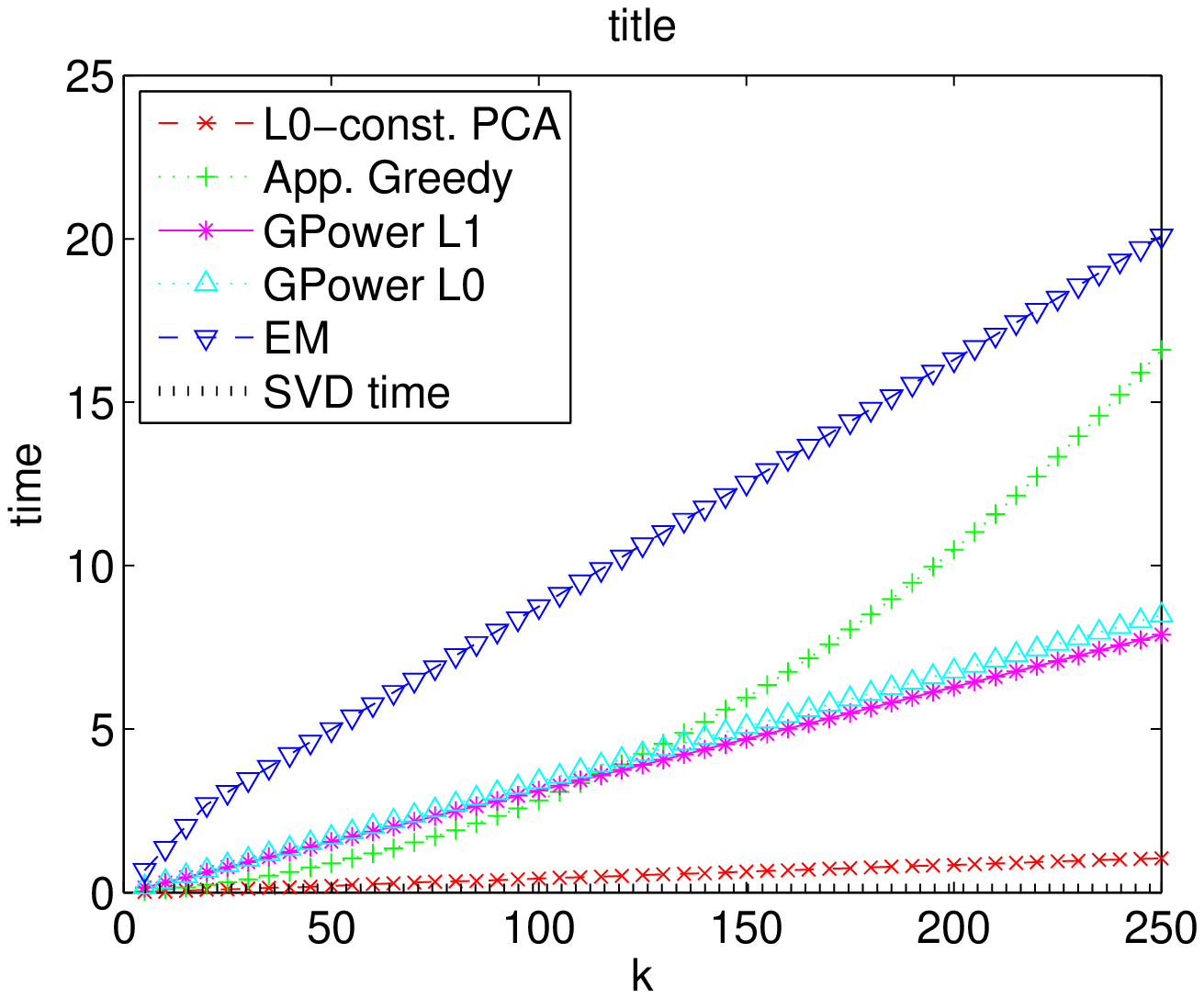}\\\\
     \psfrag{title}[t][t]{\small{1982-2011}}
     \psfrag{perc}[b]{\small{Proportion of explained variance}}
     \psfrag{k}[t]{\small{Sparsity}}     \includegraphics[width=0.49\textwidth]{./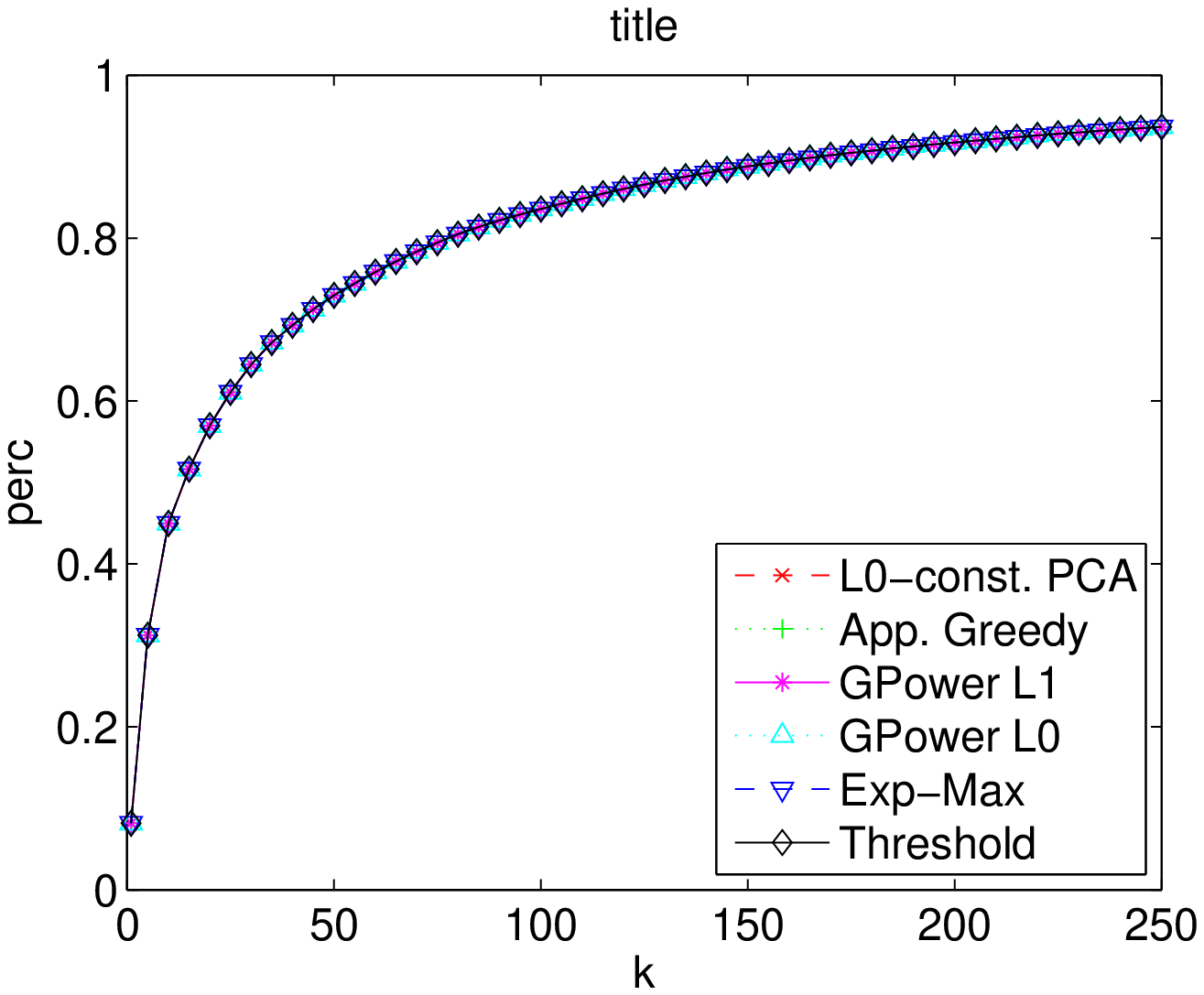}&
     \psfrag{title}[t][t]{\small{1982-2011}}
     \psfrag{time}[b]{\small{Time (seconds)}}
     \psfrag{k}[t]{\small{Sparsity}}
\includegraphics[width=0.49\textwidth]{./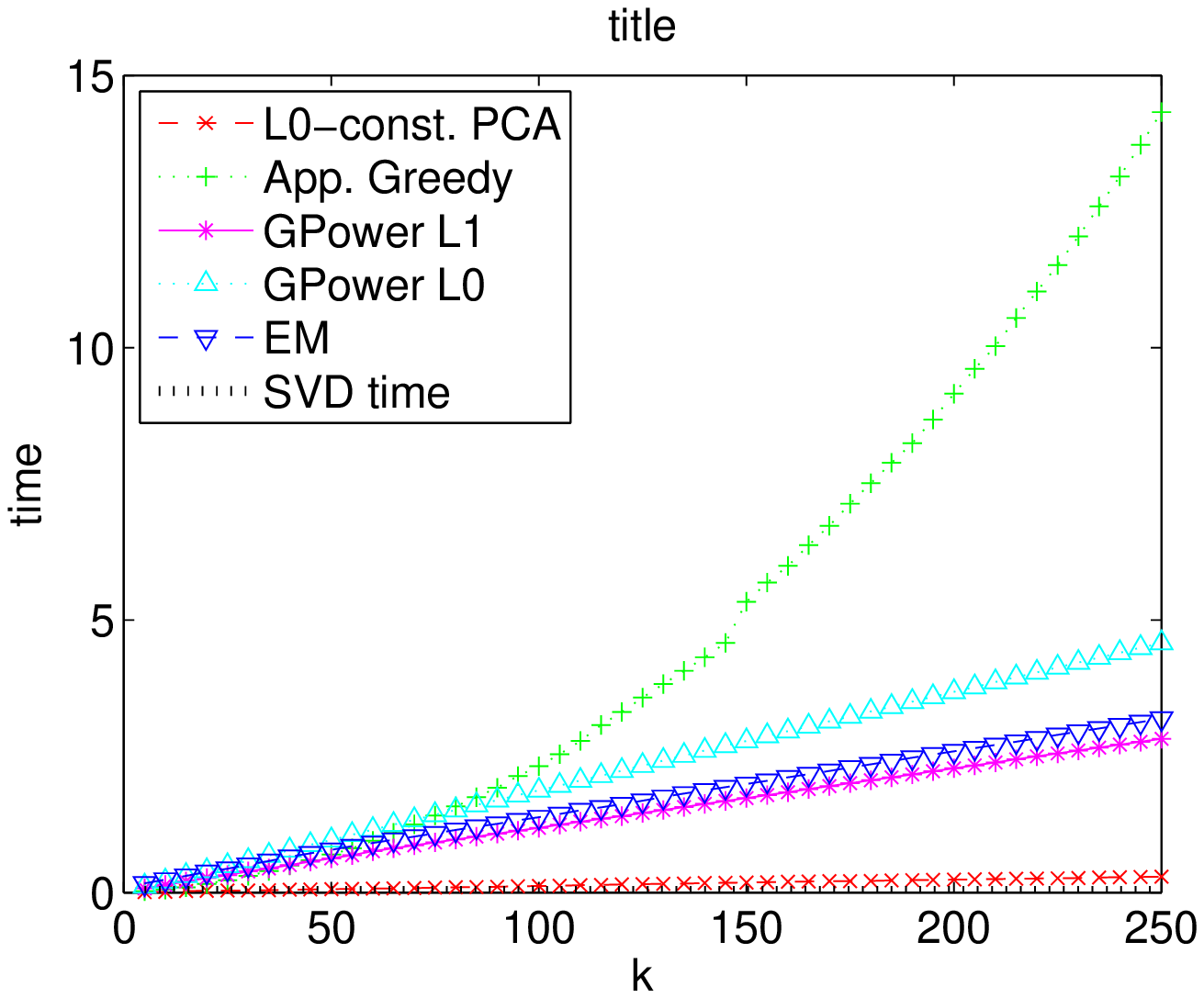}
  \end{tabular}
\caption{The left plots shows the average percent of variance explained by the sparse eigenvectors and the right plots show the computational time to run the algorithms.  Data is from the text of State of the Union addresses.  Two different numbers of samples are used: all addresses from 1790-2011 and just those from 1982-2011.}
\label{fig:stateofunion_stats}\end{center}\end{figure}

Figure \ref{fig:stateofunion_stats} displays the performance of the various algorithms on the text data set using the same measure as with random data above.  For this high-dimensional data set, much fewer variables are needed to explain more variation relative to what was observed with the random matrices.  Another major difference is that each algorithm gives exactly the same solution; even just taking the thresholded solution gives the same solution.  This leads to postulate that real data often might contain a structure that makes it rather easy to solve (still, of course, we have no results on solution quality).  Note that the $l_0$-constrained scheme seems to be the cheapest algorithm here because, starting at the thresholded solution, it required one iteration to achieve convergence!  Despite the simplicity of obtaining sparse solutions for this data, we continue to show what can be learned using this tool.  The goal is to show that sparse factors offer interpretability that cannot be learned from using all 12953 variables.

\begin{figure}[h!] \begin{center}
  \begin{tabular} {cc}
     \psfrag{pca}[t]{\small{PCA}}
     \psfrag{fac2}[b]{\small{Factor 2}}
     \psfrag{fac1}[t]{\small{Factor 1}}
     \includegraphics[width=0.49\textwidth]{./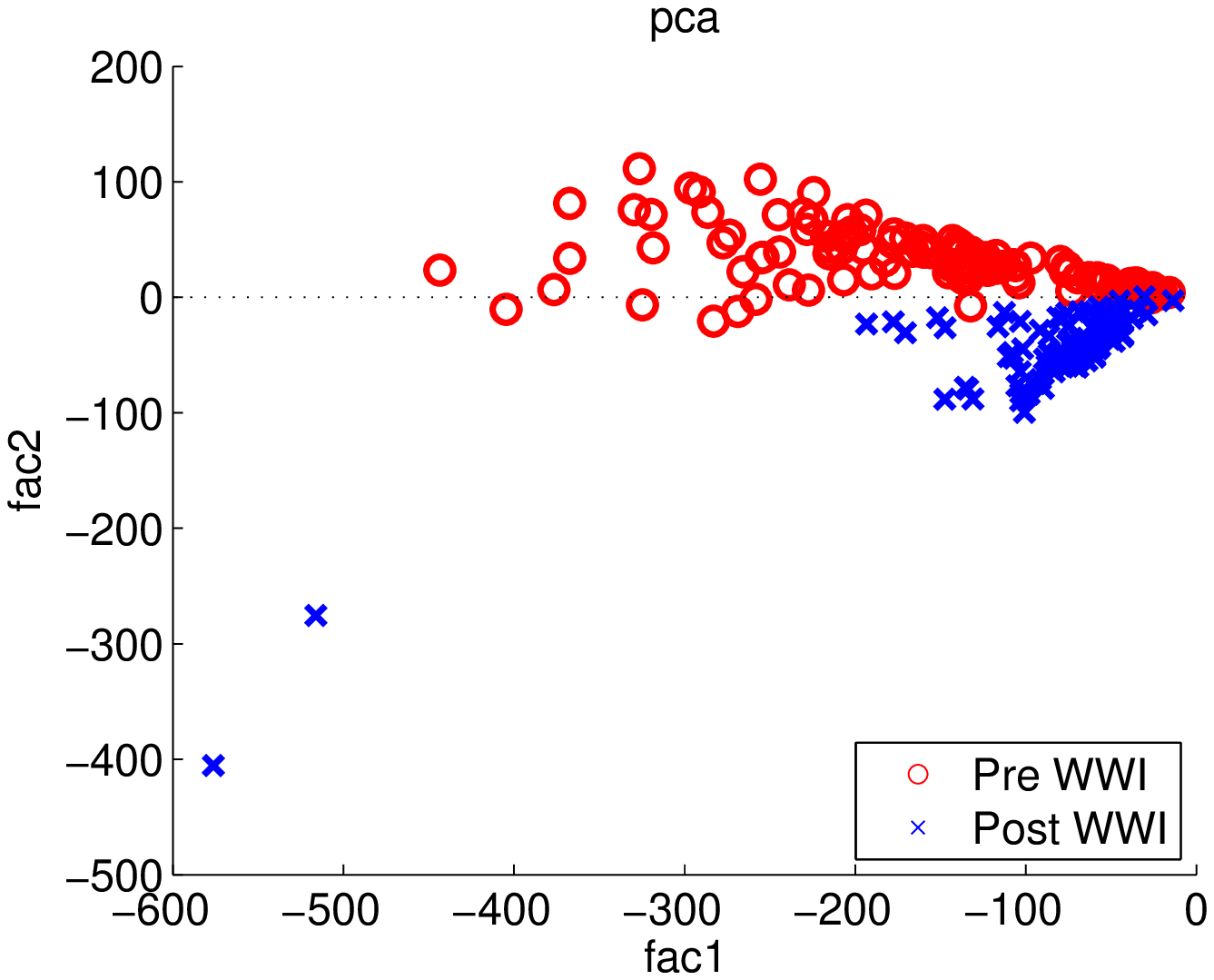}&
     \psfrag{spca}[t]{\small{Sparse PCA}}
     \psfrag{fac2}[b]{\small{Factor 2}}
     \psfrag{fac1}[t]{\small{Factor 1}}
     \includegraphics[width=0.49\textwidth]{./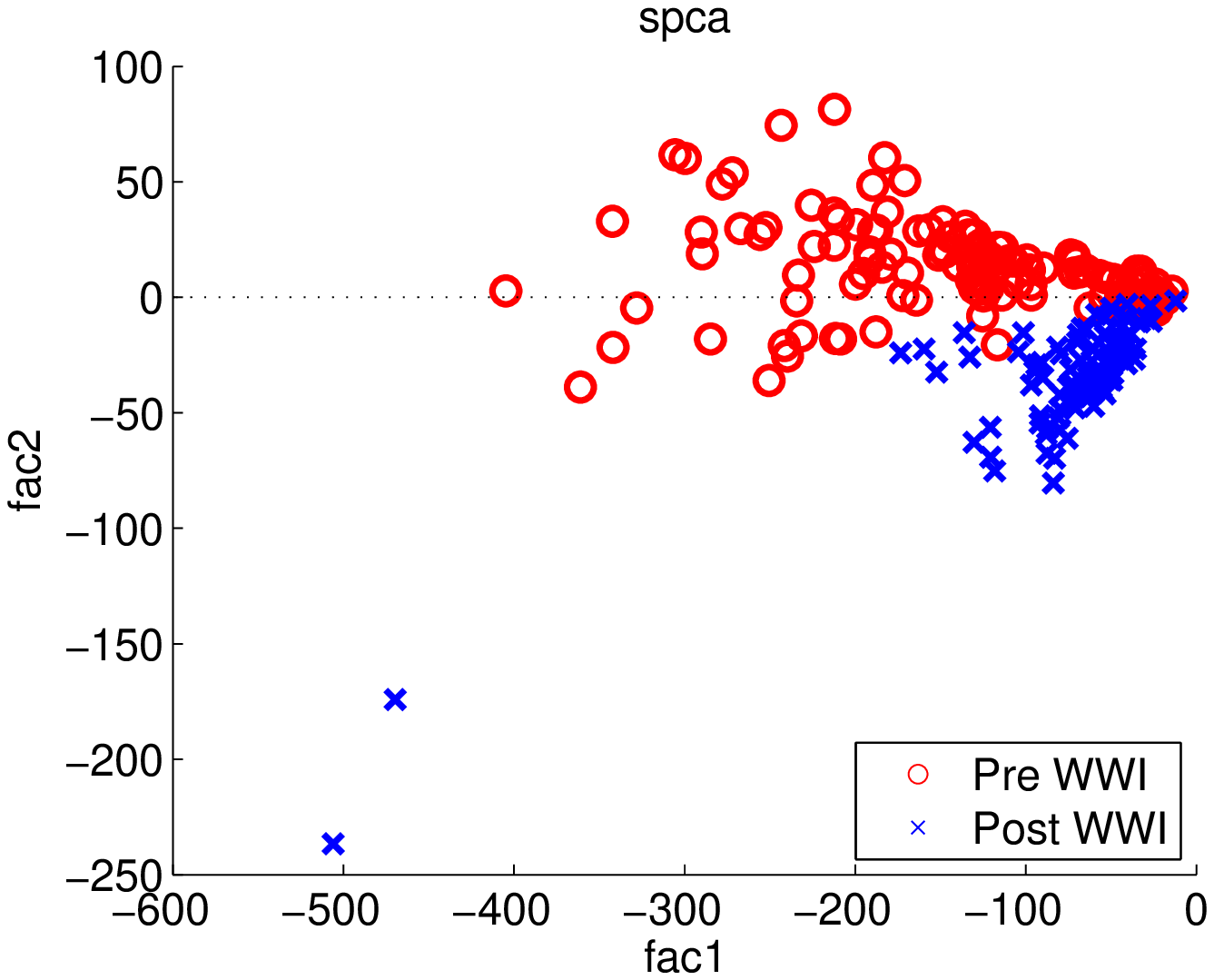}
  \end{tabular}
\caption{The left plot shows the text of all State of the Union addresses from 1790-2011 reduced from 12953 to 2 dimensions using PCA.  The right plot shows the same data reduced to 2 dimensions using $l_0$-constrained PCA.  Factor 1 is a function of 150 words, while Factor 2 is a function of only 15 words.}
\label{fig:ww1}\end{center}\end{figure}

Figure \ref{fig:ww1} (left) shows the result of projecting the data on the first two principal eigenvectors\footnote{We use projection deflation, as described in \cite{Mack2009}, to obtain multiple sparse factors.}.  The second factor clearly clusters the speeches into two groups (roughly into positive versus negative coordinates in the second factor).  Examination of these two groups shows a chronological pattern which as seen in the figure clusters those speeches that occurred before (and during) World War I with those speeches that were made after the war.  Figure \ref{fig:ww1} (right) shows the data projected on sparse factors giving very a similar illustration.  Factors 1 and 2 use 150 and 15 variables, respectively.  Table \ref{table:ww1} displays the words from the sparse second factor and their sign.  We \emph{roughly} associate the positively weighted words with speeches before the war and negatively weighted words with speeches after the war.  Then one might interpret that, before World War I, presidents spoke about the United States of America as a collection of united states, but afterwords, spoke about the country as one american nation that faced issues as a whole.

\begin{table}[h!]
\begin{center}
\begin{tabular}{|c|c|}\hline
Factor 2&Sign\\ \hline
govern&+ \\ \hline
state&+ \\ \hline
unite&+ \\ \hline
american&- \\ \hline
econom&- \\ \hline
feder&- \\ \hline
help&- \\ \hline
million&- \\ \hline
more&- \\ \hline
nation&- \\ \hline
new&- \\ \hline
program&- \\ \hline
work&- \\ \hline
world&- \\ \hline
year&- \\ \hline
\end{tabular}
\end{center}
\caption{The table shows the top 15 our of 12953 words associated with the second factor derived from $l_0$-constrained PCA on the text of all State of the Union addresses from 1790-2011.  The words for Factor 2 of thresholded PCA are almost identical.  A first factor with 150 nonzero entries was deflated from the data.  The signs of the words are given to show what drives the difference between the clusters in Figure \ref{fig:ww1}.  Note that words here are \emph{word stems}, i.e. the words \emph{program} and its plural \emph{programs} are both represented by \emph{program}.} \label{table:ww1}
\end{table}

\begin{figure}[h!] \begin{center}
  \begin{tabular} {cc}
     \psfrag{pca}[t]{\small{PCA}}
     \psfrag{fac2}[b]{\small{Factor 2}}
     \psfrag{fac1}[t]{\small{Factor 1}}     \includegraphics[width=0.49\textwidth]{./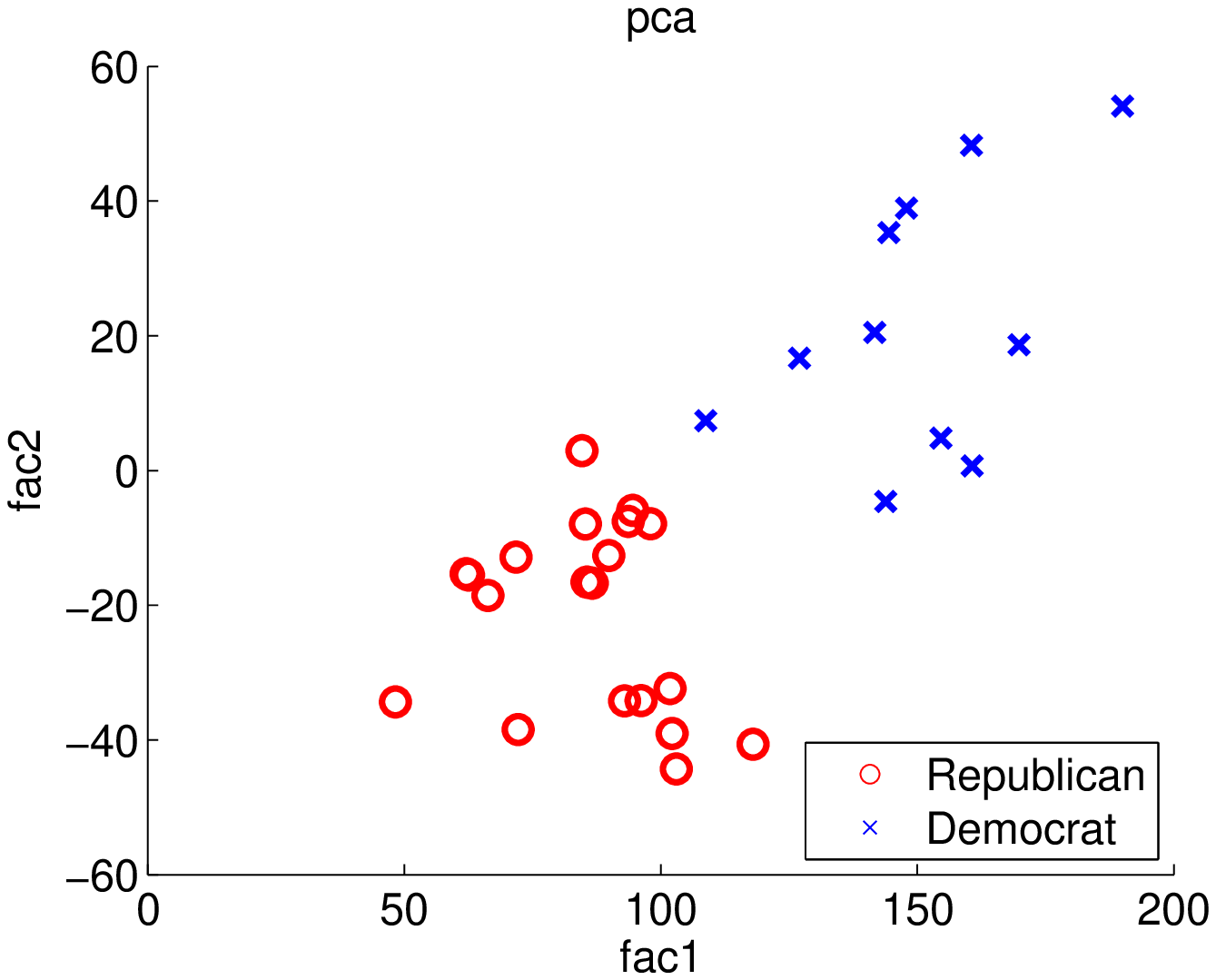}&
     \psfrag{spca}[t]{\small{Sparse PCA}}
     \psfrag{fac2}[b]{\small{Factor 2}}
     \psfrag{fac1}[t]{\small{Factor 1}}
\includegraphics[width=0.49\textwidth]{./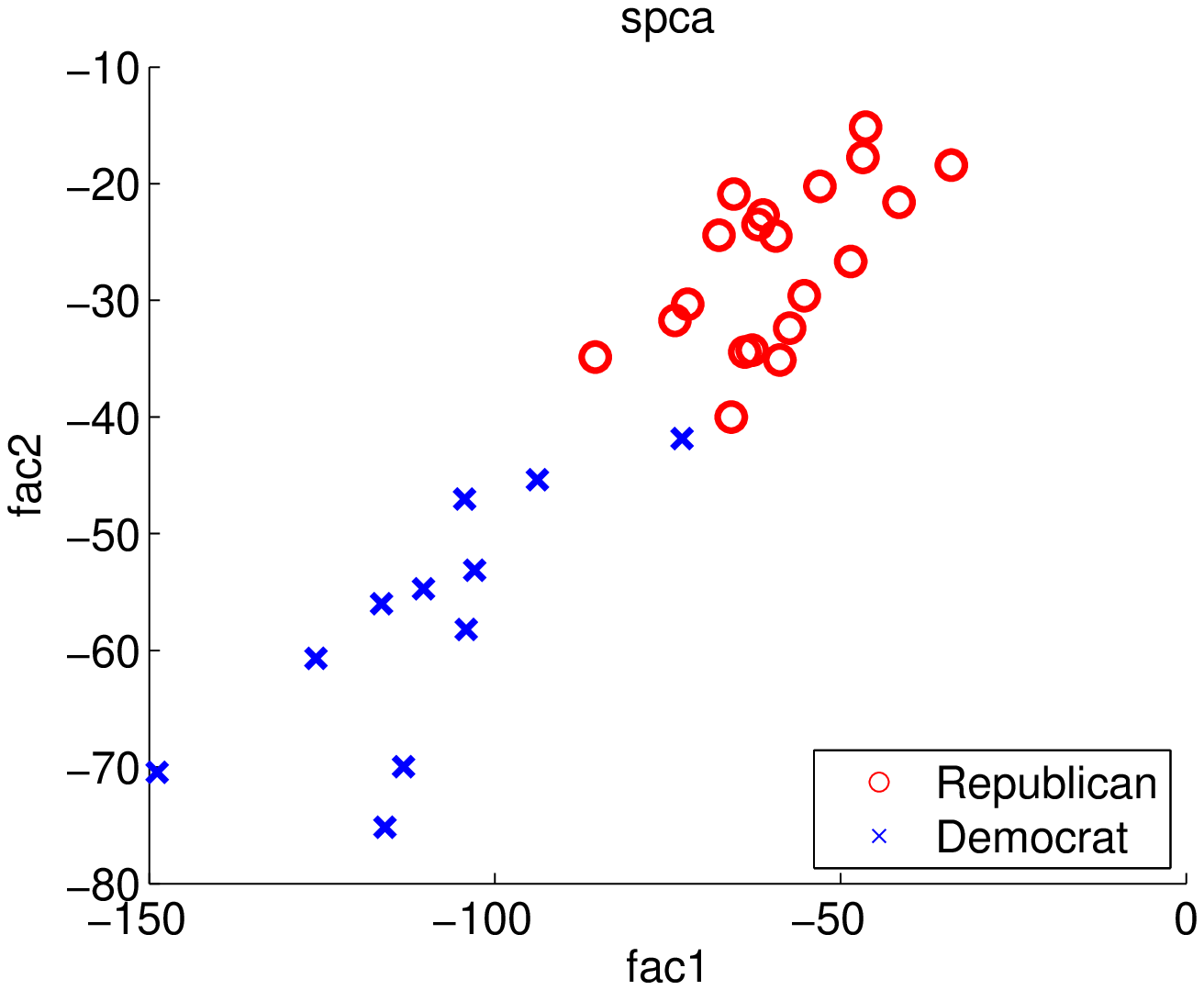}
  \end{tabular}
\caption{The left plot shows the text of 31 State of the Union addresses from 1982-2011 reduced from 12953 to 2 dimensions using PCA.  The right plot shows the same data reduced to 2 dimensions using $l_0$-constrained PCA where both factors are functions of exactly 15 words.}
\label{fig:parties}\end{center}\end{figure}

Figure \ref{fig:parties} next shows a similar analysis using only speeches from 1982-2011.  A clear distinction between republicans and democrats was discovered.  Again, sparse PCA is used to interpret the factors with 15 variables each.  Table \ref{table:parties} shows the most important words discovered by PCA (using thresholding) and sparse PCA for the top 3 factors.  The first factor gives the same words for both analyses and no clear interpretation is seen.  The second factors are more interesting.  The thresholded PCA factor clearly relates to international security issues.  The sparse PCA factor, however, clearly focuses on domestic issues, e.g., health-care and education.  Differences occur because PCA deflates with the true eigenvectors while sparse PCA deflates with the sparse factors.  In any case, these are clearly issues that divide republicans and democrats.  The third thresholded PCA factor seems to be related to educational reforms and funding them.  The third sparse PCA factor is clearly related to fiscal policies and the economy.

\begin{table}[h!]
\begin{center}
\begin{tabular}{|c|c|c|c|c|c|c|}
\cline{1-3}\cline{5-7}
\multicolumn{3}{|c|}{PCA Factors 1-3}&&\multicolumn{3}{|c|}{Sparse PCA Factors 1-3} \\ \cline{1-3}\cline{5-7}
  1 & 2 & 3 & & 1 & 2 & 3 \\ \cline{1-3}\cline{5-7}
year&america&govern&&year&job&budget\\ \cline{1-3}\cline{5-7}
american&work&peopl&&american&countri&economi\\ \cline{1-3}\cline{5-7}
more&terrorist&program&&more&children&program\\ \cline{1-3}\cline{5-7}
peopl&world&school&&peopl&secur&live\\ \cline{1-3}\cline{5-7}
work&freedom&centuri&&work&famili&million\\ \cline{1-3}\cline{5-7}
america&nation&commun&&new&tonight&over\\ \cline{1-3}\cline{5-7}
new&peopl&spend&&america&care&busi\\ \cline{1-3}\cline{5-7}
nation&more&children&&nation&health&futur\\ \cline{1-3}\cline{5-7}
make&cut&america&&make&ask&plan\\ \cline{1-3}\cline{5-7}
help&iraq&new&&help&last&reform\\ \cline{1-3}\cline{5-7}
govern&year&tax&&govern&school&most\\ \cline{1-3}\cline{5-7}
world&terror&countri&&world&state&mani\\ \cline{1-3}\cline{5-7}
time&care&deficit&&time&support&respons\\ \cline{1-3}\cline{5-7}
tax&job&challeng&&congress&commun&good\\ \cline{1-3}\cline{5-7}
congress&secur&support&&tax&cut&invest\\ \cline{1-3}\cline{5-7} \end{tabular}
\end{center}
\caption{The left side of the table shows the top 15 words associated with the first 3 factors derived from PCA on the text of 31 State of the Union addresses from 1982-2011.  The right side of the table shows the top 15 words when the 3 factors are derived using $l_0$-constrained PCA. The original number of dimensions is 12953. Note that words here are \emph{word stems}, i.e. the words \emph{program} and its plural \emph{programs} are both represented by \emph{program}.} \label{table:parties}
\end{table}

\section{Concluding Remarks} \label{s:extensions}
Sparse PCA admits a simple formulation which maximizes a (usually convex) quadratic objective
subject to \emph{seemingly} simple constraints. Nonetheless, it is a difficult nonconvex
optimization problem, and a large literature has focused on various modifications of the
desired problem in order to derive simple algorithms that hopefully produce \emph{good}
approximate solutions. No gaps to the solution of the original problem are given by any of the
currently known schemes.

In this paper, we have shown that the conditional gradient algorithm ConGradU:
\begin{itemize}
\item  can be directly applied to the
original $l_0$-constrained PCA problem (\ref{eq:sparse_pca}) without any modifications to produce a very
simple scheme with low computational complexity.
\item can be successfully applied to maximizing a convex function over an arbitrary compact set and was proven to exhibit global convergence to stationary points.  Efficiency of this scheme builds on the result that, while maximizing a quadratic function over the $l_2$ unit ball with an $l_0$ constraint is a difficult problem, maximizing a linear function over the same nonconvex set is simple.
\item provides a unifying framework to derive and analyze all new and old schemes discussed
in the paper which, as we have seen, were derived from  disparate approaches in the cited
literature. As shown, all these algorithms which have been used in various applications are
special cases of ConGradU.
\end{itemize}

The overall message  is that, for some difficult problems, we can achieve the same (limited)
theoretical guarantees and practical performance using the same algorithm on modified
(seemingly easier) problems as we can on the original difficult problem.  Furthermore, all of these algorithms emerging from ConGradU give similar performance in practice and
with similar complexities.

We conclude by showing that the same tools we have used for applying the ConGradU algorithm to sparse PCA can readily be applied to other sparsity-constrained statistical tools, e.g., sparse Singular Value Decompositions, sparse Canonical Correlation Analysis, and sparse nonnegative PCA.
Note that our comments below about $l_0$-constrained problems can also be extended to the corresponding $l_1$-constrained and $l_0/l_1$-penalized problems.\\

\noindent{\bf Sparse Singular Value Decomposition (SVD)}\\
Sparse SVD solves the problem
\begin{equation} \label{eq:sparse_svd}
\max{\{x^TBy : \|x\|_2=1, \|y\|_2=1, \|x\|_0\leq k_1, \|y\|_0\leq k_2,x\in\reals^m,y\in\reals^n\}}
\end{equation}
where $B\in\reals^{m\times n}$ is the data matrix and $k_1, k_2$ are positive integers.  Note that this is equivalent to $l_0$-constrained PCA when $x=y$ and $B\in\symm^n$.  \cite{Witt2009} considered a relaxation to this problem as described in Section \ref{ss:l1constrained_pca}.  They relaxed the $l_0$ constraints on $x$ and $y$ with $l_1$ constraints (and relaxed equalities to inequalities) and applied an alternating maximization scheme, where each optimization problem is easily solved via Proposition \ref{prop:lin_over_l2ball_l1ball}.  Following exactly the approach we suggested for $l_0$-constrained PCA, there is no need to relax the $l_0$ constraints.  Proposition \ref{prop:lin_over_l2ball_card} can be used to solve directly the alternating optimization problems, giving rise to a simple algorithm for the $l_0$-constrained SVD problem (\ref{eq:sparse_svd}).\\

\noindent{\bf Sparse Canonical Correlation Analysis (CCA)}\\
Sparse CCA solves the problem
\begin{equation} \label{eq:sparse_cca}
\max{\{x^TB^TCy : x^TB^TBx=1, y^TC^TCy=1, \|x\|_0\leq k_1, \|y\|_0\leq k_2,x\in\reals^p,y\in\reals^q\}}
\end{equation}
where $B\in\reals^{m\times p}$ and $C\in\reals^{m\times q}$ are data matrices and $k_1, k_2$ are positive integers. \cite{Witt2009} suggest that useful (i.e., interpretable) results can still be obtained by substituting the identity matrix for $B^TB$ and $C^TC$ in the constraints, resulting in the sparse SVD problem above. Rather, we propose substituting the diagonals of $B^TB$ and $C^TC$ as proxies.  Propositions \ref{prop:lin_over_l2ball_card} and \ref{prop:lin_over_l2ball_l1ball} (among others in Section \ref{s:props}) can both easily be extended to optimizing over the constraints $x^TDx=1$ and $x^TDx\leq1$, where $D$ is diagonal. A simple alternating maximization algorithm then follows for the resulting $l_0$-constrained \emph{approximate} CCA problem.\\

\noindent{\bf Sparse Nonnegative Principal Component Analysis (PCA)}\\
Sparse nonnegative PCA solves the problem
\begin{equation} \label{eq:sparse_nn_pca}
\max{\{x^TAx : \|x\|_2=1, \|x\|_0\leq k, x\ge0, x\in\reals^n\}}
\end{equation}
where $A\in\reals^{n\times n}$ is a covariance matrix and $k$ is a positive integer.  This is exactly the $l_0$-constrained PCA problem (\ref{eq:sparse_pca}) with additional nonnegativity constraints.  Simple extensions of Propositions \ref{prop:lin_over_l2ball_card} and \ref{prop:lin_over_l2ball_l1ball} lead to a simple scheme for $l_0$-constrained nonnegative PCA based on the ConGradU algorithm.

\section{Acknowledgements}
The authors thank Noureddine El Karoui for useful discussions regarding random matrices.  This research was partially supported by the United States-Israel Science Foundation under BSF Grant \#2008-100.

\small
\bibliographystyle{plain}
\bibliography{sparsePCA}
\end{document}